\def\titlerunning#1{\gdef\titrun{#1}}
\def\author#1{\gdef\autrun{\def\and{\unskip, }#1}\gdef\@author{#1}}
\def\address#1{{\def\and{\\\hspace*{18pt}}\renewcommand{\thefootnote}{}%
\footnote {#1}}%
\markboth{\autrun}{\titrun}}
\def\email#1{e-mail: #1}
\def\subjclass#1{{\renewcommand{\thefootnote}{}%
\footnote{\emph{
Mathematics Subject Classification (2010):} #1}}
}
\def\keywords#1{\par\medskip
\noindent\textbf{Keywords.} #1}
\newtheorem{thm}{Theorem}[section]
\newtheorem{cor}[thm]{Corollary}
\newtheorem{lem}[thm]{Lemma}
\newtheorem{prop}[thm]{Proposition}
\theoremstyle{definition}
\newtheorem{defi}[thm]{Definition}
\numberwithin{equation}{section}
\def \C {\mathbb{C}}
\def \G {\mathcal{G}}
\def \D {\mathcal{D}}
\def \W {\mathcal{W}}
\def \a {\alpha }
\def \de {\delta}
\def \la {\lambda}
\def \La {\Lambda}
\def\w {\omega}
\def\Om{\Omega}
\def\pa{\partial}
\def\na {\nabla}
\def\Ga{\Gamma}
\def\ra{\rightarrow}
\begin{document}
\baselineskip=17pt

\titlerunning{A lower bound on the solutions of Kapustin-Witten equations}
\title{A lower bound on the solutions of Kapustin-Witten equations}

\author{Teng Huang}

\date{}

\maketitle

\address{T. Huang: Department of Mathematics, University of Science and Technology of China,
                   Hefei, Anhui 230026, PR China; \email{oula143@mail.ustc.edu.cn}}

\subjclass{53C07; 58E15}
\begin{abstract}
In this article,\ we consider the Kapustin-Witten equations on a closed $4$-manifold.\ We study certain analytic properties of solutions to the equations on a closed manifold.\ The main result is that there exists an $L^{2}$-lower bound on the extra fields over a closed four-manifold satisfying certain conditions if the connections are not ASD connections.\ Furthermore,\ we also obtain a similar result about the Vafa-Witten equations.
\end{abstract}

\keywords{Kapustin-Witten equations, ASD connections, flat $G_{\C}$-connections}

\section{Introduction}

Let $X$ be an oriented $4$-manifold with a given Riemannian metric $g$.\ We use the metric on $X$ to define the Hodge star operator on $\La^{\bullet}T^{\ast}X$ and then write the bundle of $2$-forms as the direct sum $\Om^{2}T^{\ast}X=\Om^{+}\oplus\Om^{-}$ with $\Om^{+}$ denoting the bundle of self-dual $2$-forms and with $\Om^{-}$ denoting the bundle of anti-self-dual $2$-forms,\ with respect to this Hodge star.\ If $\w$ denotes a given $2$-form,\ then its respective self-dual and anti-self-dual parts are denoted by $\w^{+}$ and $\w^{-}$.

Let $P$ be a principal bundle over $X$ with structure group $G$.\ Supposing that $A$ is the connection on $P$,\ then we denote by $F_{A}$ its curvature $2$-form,\ which is a $2$-form on $X$ with values in the bundle associated to $P$ with fiber the Lie algebra of $G$ denoted by $\mathfrak{g}$.\ We define by $d_{A}$ the exterior covariant derivative on section of $\La^{\bullet}T^{\ast}X\otimes(P\times_{G}\mathfrak{g})$ with respect to the connection $A$.

The Kapustin-Witten equations are defined on a Riemannian $4$-manifold given a principal bundle $P$.\ For most our considerations,\ $G$ is taken to be $SO(3)$.\ The equations require a pair $(A,\phi)$ of connection on $P$ and section of $T^{\ast}X\otimes(P\times_{G}\mathfrak{g})$ to satisfy
\begin{equation}\label{82}
(F_{A}-\phi\wedge\phi)^{+}=0\quad and\quad (d_{A}\phi)^{-}=0\quad and\quad d_{A}\ast\phi=0.
\end{equation}
Witten \cite{W1,W2,W3,W4},\ \cite{GW},\ \cite{KEW} and also Haydys \cite{Hay} proposed that certain linear combinations of the equations in (\ref{82}) and the version with the self and anti-self dual forms interchanged should also be considered.\ The latter are parametrized by $\tau\in[0,1]$ which can be written as
\begin{equation}\label{81}
\begin{split}
&\tau(F_{A}-\phi\wedge\phi)^{+}=(1-\tau)(d_{A}\phi)^{+},\\
&(1-\tau)(F_{A}-\phi\wedge\phi)^{-}=-\tau(d_{A}\phi)^{-},\\
&d_{A}\ast\phi=0.\\
\end{split}
\end{equation}
The $\tau=0$ version of (\ref{81}) and the $\tau=1$ version of (\ref{81}) is the version of (\ref{82}) that is defined on $X$ with its same metric but with its orientation reversed.\ In the case when $X$ is compact,\ Kapustin and Witten \cite{KEW} proved that the solution of (\ref{82}) with $\tau\in(0,1
)$ exists only in the case when $P\times_{G}\mathfrak{g}$ has zero first Pontrjagin number,\ and if so,\ the solutions are such that $A+\sqrt{-1}\phi$ defining a flat $PSL(2,\C)$ connection.\ A nice discussions of there equations can be found in \cite{GU}.

If $X$ is compact,\ and $(A,\phi)$ obeys (\ref{82}),\ then
\begin{equation}\nonumber
\begin{split}
YM_{\C}(A+\sqrt{-1}\phi)&=\int_{X}\big{(}|(F_{A}-\phi\wedge\phi)|^{2}+|d_{A}\phi|^{2}\big{)}dvol_{g}\\
&=\int_{X}\big{(}|F_{A}|^{2}-2\langle F_{A},\phi\wedge\phi\rangle+|\phi\wedge\phi|^{2}-\langle\ast[F_{A},\phi],\phi\rangle\big{)}dvol_{g}\\
&=\int_{X}\big{(}|F_{A}|^{2}-4\langle F^{+}_{A},\phi\wedge\phi\rangle+2|(\phi\wedge\phi)^{+}|^{2}\big{)}dvol_{g}\\
&=\int_{X}\big{(}|F_{A}|^{2}-2|F^{+}_{A}|\rangle\big{)}dvol_{g}\\
&=-4\pi^{2}p_{1}(P\times_{SO(3)}\mathfrak{s}\mathfrak{o}(3)).\\
\end{split}
\end{equation}
where $\mathfrak{s}\mathfrak{o}(3)$ is the Lie algebra of $SO(3)$ and $p_{1}(P)$ is the first Pontrjagin number,\ the second line we used the fact (\ref{41}).\ This identity implies,\ among other things,\ that there are no solutions to (\ref{82}) in the case when $X$ is compact and the first Pontrjagin number is positive.\ It also implies that $(A,\phi)$ solves (\ref{82}) when $X$ is compact and $p_{1}(P\times_{SO(3)}\mathfrak{s}\mathfrak{o}(3))=0$ if and only if $A+\sqrt{-1}\phi$ defines a flat $SL(2,\C)$ connection on $X$.

In \cite{T1},\ Taubes studied the Uhlenbeck style compactness problem for $SL(2,\C)$ connections,\ including solutions to the above equations,\ on four-manifolds (see also \cite{T2,T3}).\ In \cite{TY2},\ Tanaka observed that equations (\ref{82}) on a compact K\"{a}hler surface are the same as Simpson's equations,\ and proved that the singular set introduced by Taubes for the case of Simpson's equations has a structure of a holomorphic subvariety.

We define the configuration spaces
\begin{equation}\nonumber
\begin{split}
&\mathcal{C}:=\mathcal{A}_{P}\times\Om^{1}(X,\mathfrak{g}_{P}),\\
&\mathcal{C}':=\Om^{2,+}(X,\mathfrak{g}_{P})\times\Om^{2,-}(X,\mathfrak{g}_{P}).\\
\end{split}
\end{equation}
We also define the gauge-equivariant map
$$KW:\mathcal{C}\rightarrow\mathcal{C}',$$
$$KW(A,\phi)=\big{(}F_{A}^{+}-(\phi\wedge\phi)^{+},(d_{A}\phi)^{-}\big{)}.$$
Mimicking the setup of Donaldson theory,\ the $KW$-moduli space is
$$M_{KW}(P,g):=\{(A,\phi)\mid KW(A,\phi)=0\}/\mathcal{G}_{P}.$$
The moduli space $M_{ASD}$ of all ASD connections can be embedded into $M_{KW}$ via $A\mapsto(A,0)$,\ here $A$ is an ASD connection.\ In particular,\ Taubes \cite{T4,T5} proved the existence of ASD connections on certain four-manifolds satisfying extra conditions.\ For any positive real constant $C\in\mathbb{R}^{+}$,\ we define the $C$-truncated moduli space
$$M^{C}_{KW}(P,g):=\{(A,\phi)\in M_{KW}(P,g)\mid\|\phi\|_{L^{2}(X)}\leq C\}.$$

In this article,\ we assume that there is a peculiar circumstance in that one obtains an $L^{2}$ lower bound on the extra field $\phi$  on a closed,\ oriented,\ four-dimension manifold,\ $X$ satisfies certain conditions if the connection is not an ASD connection.
\begin{thm}(Main Theorem)\label{2}
Let $X$ be a closed,\ oriented,\ $4$-dimensional Riemannian manifold with Riemannaian metric $g$,\ let $P\rightarrow X$ be a principal $G$-bundle with $G$ being a compact Lie group  with $p_{1}(P)$ negative and be such that there exist $\mu,\de>0$ with the property that $\mu(A)\geq\mu$ for all $A\in\mathfrak{B}_{\de}(P,g)$,\ where $\mathfrak{B}_{\de}(P,g):=\{[A]:\|F^{+}_{A}\|_{L^{2}(X)}<\de\}$ and $\mu(A)$ is as in (\ref{12}).\ There exist a positive constant,\ $C$,\ with the following significance.\ If $(A,\phi)$ is an $L^{2}_{1}$ solution of $M^{C}_{KW}(P,g)$,\ then $A$ is anti-self-dual with respect to the metric $g$.
\end{thm}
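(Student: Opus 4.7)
Since ``$A$ is ASD'' means precisely $F_A^+=0$, and the first equation of (\ref{82}) gives the pointwise identity $F_A^+=(\phi\wedge\phi)^+$, the conclusion reduces to forcing $(\phi\wedge\phi)^+$ (and in fact $\phi$ itself) to vanish whenever $\|\phi\|_{L^2(X)}\leq C$ is small. The strategy is a bootstrap: exploit the smallness of $\|\phi\|_{L^2}$ to guarantee $[A]\in\mathfrak{B}_{\de}(P,g)$ and thereby activate the spectral hypothesis $\mu(A)\geq\mu$, then combine this spectral bound with the remaining two KW equations and the Bochner--Weitzenb\"ock identity to produce a coercive inequality for $\phi$ that collapses when $C$ is small enough.

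For the analytic setup, note that $F_A^+=(\phi\wedge\phi)^+$ gives $\|F_A^+\|_{L^2}\leq c\|\phi\|_{L^4}^2$, and the Sobolev embedding $L^2_1\hookrightarrow L^4$ in dimension four, combined with the Kato inequality, yields $\|\phi\|_{L^4}\leq c(\|\phi\|_{L^2}+\|\na_A\phi\|_{L^2})$. Meanwhile the two equations $(d_A\phi)^-=0$ and $d_A\ast\phi=0$, together with the Bochner--Weitzenb\"ock formula for 1-forms, give
$$\|d_A\phi\|_{L^2}^2 \;=\; \|\na_A\phi\|_{L^2}^2+\int_X\mathrm{Ric}(\phi,\phi)\,dvol_g+c_0\int_X\langle F_A,\phi\wedge\phi\rangle\,dvol_g.$$
Decomposing $F_A=F_A^++F_A^-$ and substituting $F_A^+=(\phi\wedge\phi)^+$, the cubic curvature coupling splits into $c_0\|(\phi\wedge\phi)^+\|_{L^2}^2$ plus a cross term bounded by $\|F_A^-\|_{L^2}\|\phi\|_{L^4}^2$; Chern--Weil supplies $\|F_A^-\|_{L^2}^2=-4\pi^2 p_1(P)+\|F_A^+\|_{L^2}^2$, a topologically bounded quantity.

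To activate the spectral gap, interpret $\mu(A)$ (defined in (\ref{12})) as the least positive eigenvalue of the elliptic operator $d_A^+\oplus d_A^*$ on $\Om^1(\mathfrak{g}_P)$. Since $(d_A\phi)^-=0$ means $d_A^+\phi=d_A\phi$, and $d_A\ast\phi=0$ means $d_A^*\phi=0$, the hypothesis $\mu(A)\geq\mu$ produces
$$\mu\,\|\phi_{\perp}\|_{L^2}^2\;\leq\;\|d_A^+\phi\|_{L^2}^2+\|d_A^*\phi\|_{L^2}^2 \;=\; \|d_A\phi\|_{L^2}^2,$$
where $\phi_\perp$ is the component of $\phi$ orthogonal to the harmonic kernel of $d_A^+\oplus d_A^*$; any harmonic component $\phi-\phi_\perp$ is treated separately, using the algebraic constraint $(\phi\wedge\phi)^+\in\mathrm{Im}\,d_A^+$ and the smallness of $\|\phi\|_{L^2}$. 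Feeding the Weitzenb\"ock display into this inequality and absorbing the $L^4$-cubic terms via Sobolev produces a closed estimate of the form $\mu\|\phi\|_{L^2}^2\leq\Theta(\|\phi\|_{L^2})\,\|\phi\|_{L^2}^2$ with $\Theta(s)\to 0$ as $s\to 0$; choosing $C$ so small that $\Theta(C)<\mu/2$ and that $\|F_A^+\|_{L^2}<\de$ is maintained throughout forces $\phi\equiv 0$, hence $F_A^+=0$.

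The hardest step is activating the spectral gap cleanly: the precise form of $\mu(A)$ in (\ref{12}) and the nature of its kernel dictate whether the coercive inequality closes immediately or only after separately ruling out a harmonic component of $\phi$ via the KW algebraic constraint. A further quantitative difficulty is the bootstrap itself, where one must absorb the nonlinear $L^4$-cubic terms while simultaneously keeping $\|F_A^+\|_{L^2}<\de$, so that the hypothesis $\mu(A)\geq\mu$ remains in force throughout and a concrete positive threshold $C=C(g,p_1(P),\mu,\de)$ is produced.
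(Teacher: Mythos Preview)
Your approach has a fundamental gap: you misidentify what $\mu(A)$ controls. Definition~(\ref{12}) sets
\[
\mu(A)=\inf_{v\in\Om^{2,+}(\mathfrak{g}_{P})\setminus\{0\}}\frac{\|d_{A}^{+,\ast}v\|_{L^{2}}^{2}}{\|v\|_{L^{2}}^{2}},
\]
the least eigenvalue of $d_{A}^{+}d_{A}^{+,\ast}$ acting on \emph{self-dual $2$-forms}, not of $d_{A}^{+}\oplus d_{A}^{\ast}$ acting on $1$-forms. The hypothesis $\mu(A)\geq\mu>0$ says precisely that $d_{A}^{+,\ast}$ is injective on $\Om^{2,+}(\mathfrak{g}_{P})$ (equivalently $d_{A}^{+}$ is surjective), i.e.\ $H^{2}_{A}=0$ in the deformation complex. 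It gives \emph{no} coercive lower bound of the form $\mu\|\phi_{\perp}\|_{L^{2}}^{2}\leq\|d_{A}^{+}\phi\|_{L^{2}}^{2}+\|d_{A}^{\ast}\phi\|_{L^{2}}^{2}$ for $\phi\in\Om^{1}(\mathfrak{g}_{P})$: that would require control of $H^{1}_{A}$, which is typically nonzero (a regular point of a positive-dimensional ASD moduli space has $H^{2}_{A}=0$ but $H^{1}_{A}\neq 0$). Your subsidiary remark that the constraint $(\phi\wedge\phi)^{+}\in\mathrm{Im}\,d_{A}^{+}$ handles the harmonic piece is also empty here, since $\mu(A)>0$ forces $d_{A}^{+}$ to be onto.

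The paper uses $\mu(A)\geq\mu$ for something entirely different: it is the hypothesis needed to solve Taubes' perturbation equation $d_{A}^{+}d_{A}^{+,\ast}u+(d_{A}^{+,\ast}u\wedge d_{A}^{+,\ast}u)^{+}=-F_{A}^{+}$ and thereby produce, via Corollary~\ref{17} and Lemma~\ref{59}, a genuine ASD connection $A_{0}$ with $\|A-A_{0}\|_{L^{4}}\leq C\|F_{A}^{+}\|_{L^{2}}$. The proof then compares two Weitzenb\"ock identities: for the ASD connection $A_{0}$ one has $\|\na_{A_{0}}\phi\|_{L^{2}}^{2}+\int_{X}\langle\mathrm{Ric}\circ\phi,\phi\rangle\geq 0$ (inequality~(\ref{19})), while for $A$, using $(d_{A}\phi)^{-}=d_{A}^{\ast}\phi=0$ and $F_{A}^{+}=(\phi\wedge\phi)^{+}$, one has the \emph{equality} $\|\na_{A}\phi\|_{L^{2}}^{2}+\int_{X}\langle\mathrm{Ric}\circ\phi,\phi\rangle+4\|F_{A}^{+}\|_{L^{2}}^{2}=0$ (identity~(\ref{21})). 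Subtracting and estimating $\|\na_{A}\phi-\na_{A_{0}}\phi\|_{L^{2}}^{2}\leq C\|A-A_{0}\|_{L^{4}}^{2}\|\phi\|_{L^{4}}^{2}\leq C'\|F_{A}^{+}\|_{L^{2}}^{2}\|\phi\|_{L^{2}}^{2}$ yields $0\leq(C'\|\phi\|_{L^{2}}^{2}-4)\|F_{A}^{+}\|_{L^{2}}^{2}$, forcing $F_{A}^{+}=0$ once $\|\phi\|_{L^{2}}$ is small. Note the conclusion is only $F_{A}^{+}=0$, not $\phi\equiv 0$ as you aim for; vanishing of $\phi$ requires the extra hypotheses of Theorem~\ref{80}. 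Your plan never produces the comparison connection $A_{0}$, and without it the Ricci term in the Weitzenb\"ock identity has no sign and cannot be absorbed, so the bootstrap does not close.
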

\begin{cor}\label{92}
Let $X$ be a closed,\ oriented,\ $4$-dimension Riemannian manifold with Riemannaian metric $g$ ,\ let $P$ be a principal $SO(3)$ bundle with $P\times_{SO(3)}\mathfrak{s}\mathfrak{o}(3)$ has negative first Pontrjagin class over $X$.\ Then there is an open dense subset,\ $\mathscr{C}(X,p_{1}(P))$,\ of the Banach space,\ $\mathscr{C}(X)$,\ of conformal equivalence classes,\ $[g]$,\ of $C^{r}$ Riemannian metrics on $X$ (for some integer $r\geq3$) with the following significance.\ If $[g]\in\mathscr{C}(X,p_{1}(P))$,\ then there exist a positive constant,\ $C$,\ with the following significance.\ Suppose that $P$ and $X$ obeys one of the following sets of conditions:

(1)$b^{+}(X)=0$; or

(2)$b^{+}(X)>0$ and the second Stiefel-Whitney class,\ $w_{2}(P)\in H^{2}(X;\mathbb{Z}/2\mathbb{Z})$,\ is non-trivial.

If $(A,\phi)$ is an $L^{2}_{1}$ solution of $M^{C}_{KW}(P,g)$,\ then $A$ is anti-self-dual with respect to the metric $g$.
\end{cor}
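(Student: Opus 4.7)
The corollary will follow from Theorem \ref{2} once the set $\mathscr{C}(X, p_1(P))$ is identified with those conformal classes $[g]$ for which the hypothesis of Theorem \ref{2}---the existence of $\mu, \delta > 0$ with $\mu(A) \geq \mu$ on $\mathfrak{B}_\delta(P, g)$---is satisfied. The plan is therefore to prove that this set is open and dense in the Banach space $\mathscr{C}(X)$ under either condition (1) or (2), and then to invoke Theorem \ref{2} directly to extract the constant $C$.

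For density I would appeal to the Freed--Uhlenbeck generic metrics theorem. The quantity $\mu(A)$ from (\ref{12}) measures the surjectivity of the curvature linearization $d_A^+ : \Omega^1(X, \mathfrak{g}_P) \to \Omega^{2,+}(X, \mathfrak{g}_P)$, so that $\mu(A) > 0$ at an ASD connection $A$ is precisely the condition for $A$ to be a regular point of $M_{ASD}(P, g)$. Under hypothesis (1), $b^+(X) = 0$ reduces the reducible-locus obstruction to an elementary check; under hypothesis (2), the non-triviality of $w_2(P) \in H^2(X; \mathbb{Z}/2\mathbb{Z})$ obstructs any reduction of $P$ to an $SO(2)$-subbundle and therefore excludes reducible ASD connections altogether. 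In both cases one obtains an open dense set of $C^r$ Riemannian metrics for which every ASD connection is irreducible and regular, and this descends to an open dense set of conformal classes since the ASD condition and the quantity $\|F_A^+\|_{L^2(X)}$ are conformally invariant in four dimensions.

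Having $\mu(A) > 0$ pointwise on $M_{ASD}(P, g)$, I would next upgrade this to a uniform positive lower bound. Since $p_1(P) < 0$ gives an a priori $L^2$ bound on $F_A$ for every ASD connection, Uhlenbeck's weak compactness theorem exhibits $M_{ASD}(P, g)$ as precompact modulo bubbling. A contradiction argument then produces $\mu_0 := \inf_{[A] \in M_{ASD}} \mu(A) > 0$: a hypothetical sequence $[A_n]$ with $\mu(A_n) \to 0$ would extract an ideal ASD limit $[A_\infty]$, and the lower semicontinuity of $\mu$ together with regularity at $A_\infty$ (after removing point singularities and transferring to the appropriate bundle) would contradict $\mu(A_n) \to 0$. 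A second contradiction argument extends the bound from $M_{ASD}$ to the enlarged neighborhood $\mathfrak{B}_\delta$ for some $\delta > 0$: a sequence with $\|F^+_{A_n}\|_{L^2} \to 0$ and $\mu(A_n) \to 0$ would weakly converge to an ASD connection, again violating $\mu_0 > 0$.

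Openness of $\mathscr{C}(X, p_1(P))$ in $\mathscr{C}(X)$ then follows from the continuity of $\mu$ in the pair $(A, g)$ together with the same compactness argument applied in one-parameter families of metrics. With openness and density established, Theorem \ref{2} applies with constant $C = C(\mu, \delta)$ and the conclusion is immediate. The principal obstacle I anticipate is the bubbling analysis inside the uniform bound: a weakly convergent sequence of ASD connections may bubble off instantons on small scales, and careful use of $p_1(P) < 0$ and the absence of reducibles will be required to guarantee that the weak limit still sits in the regular locus of the ASD moduli space for a bundle to which lower semicontinuity of $\mu$ can be applied.
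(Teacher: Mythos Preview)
Your proposal is correct and follows essentially the same route as the paper: reduce Corollary~\ref{92} to Theorem~\ref{2} by supplying the uniform lower bound $\mu(A)\geq\mu>0$ on $\mathfrak{B}_\delta(P,g)$ for generic metrics. The only difference is that the paper's proof is a one-line citation of Feehan's result (stated in the paper as Theorem~\ref{50} and Corollary~\ref{43}) for exactly this uniform bound, whereas you sketch the content of that result---Freed--Uhlenbeck genericity plus Uhlenbeck compactness on $\bar{M}_{ASD}$---from scratch.
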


The organization of this paper is as follows.\ In section 2,\ we first review some estimates of the Kapustin-Witten equations.\ Thanks to Uhlenbeck's work,\ we observe that $A$ must be a flat connection if $\|\phi\|_{L^{2}}$ is sufficiently small,\ when $A+\sqrt{-1}\phi$ is a flat $G_{\C}$-connection and $\phi\in\ker{d^{\ast}_{A}}$.\ In section 3,\ we study certain analytic properties of solutions to the Kapustin-Witten equations on the closed manifolds.\ In section 4,\ we generalize the previous observation to the case of Kapustin-Witten equations.\ More precisely ,\ we show that if $X$ satisfy certain conditions and $\|\phi\|_{L^{2}}$ is sufficiently small,\ there exists a ASD connection near given connection $A$ measured by $\|F^{+}_{A}\|_{L^{2}}$,\ then we complete the proof of main theorem by the similar way for the case of flat $G_{\C}$-connections.\ In the last section,\ we also obtain similar result about the Vafa-Witten equations.

\section{Fundamental preliminaries }

We shall generally adhere to the now standard gauge-theory conventions and notation of Donaldson and Kronheimer \cite{DK}.\ Throughout our article,\ $G$  denotes a compact Lie group and $P$ a smooth principal $G$-bundle over a compact Riemannnian manifold $X$ of dimension $n\geq2$ and endowed with Riemannian metric $g$.\ For $u\in L^{p}(X,\mathfrak{g}_{P})$,\ where $1\leq p<\infty$ and $k$ is an integer,\ we denote
\begin{equation}\label{Y4}
\|u\|_{L^{p}_{k,A}(X)}:=\big{(}\sum_{j=0}^{k}\int_{X}|\na^{j}_{A}u|^{p}dvol_{g}\big{)}^{1/p},
\end{equation}
where $\na_{A}:C^{\infty}(X,\Om^{\cdot}(\mathfrak{g}_{P}))\rightarrow\C^{\infty}(X,T^{\ast}X\otimes\Om^{\cdot}(\mathfrak{g}_{P}))$ is the covariant derivative induced by the connection,\ $A$,\ on $P$ and Levi-Civita connection defined by the Riemannian metric,\ $g$,\ on $T^{\ast}X$,\ and all associated vector bundle over $X$,\ and $\na^{j}_{A}:=\na_{A}\circ\ldots\circ\na_{A}$ (repeated $j$ times for $j\geq0$).\ The Banach spaces,\ $L^{p}_{k,A}(X,\Om^{l}(\mathfrak{g}_{P}))$,\ are the completions of $\Om^{l}(X,\mathfrak{g}_{P})$ with respect to the norms (\ref{Y4}).\ For $p=\infty$,\ we denote
\begin{equation}
\|u\|_{L^{\infty}_{k,A}(X)}:=\sum_{j=0}^{k}ess\sup_{X}|\na^{j}_{A}u|.
\end{equation}
For $p\in[1,\infty)$ and nonnegative integer $k$,\ Banach space duality to define
$$L^{p'}_{-k,A}(X,\Om^{l}(\mathfrak{g}_{P})):=\big{(}L^{p}_{k,A}(X,\Om^{l}(\mathfrak{g}_{P}))\big{)}^{\ast},$$
where $p'\in[1,\infty)$ is the dual exponent defined by $1/p+1/p'=1$.

\subsection{Identities for the solutions}

This section derives some basic identities that are obeyed by solutions to Kapustin-Witten equations.
\begin{thm}(Weitezenb\"{o}ck formula)
\begin{equation}\label{40}
\begin{split}
&d_{A}^{\ast}d_{A}+d_{A}d_{A}^{\ast}=\na^{\ast}_{A}\na_{A}+Ric(\cdot)+\ast[\ast F_{A},\cdot]\ on\ \Om^{1}(X,\mathfrak{g}_{P})\\
\end{split}
\end{equation}
where $Ric$ is the Ricci tensor.
\end{thm}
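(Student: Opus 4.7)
The plan is to establish the formula pointwise by computing $d_A^{\ast}d_A+d_A d_A^{\ast}$ in a local orthonormal frame and comparing the result with the rough Laplacian $\na_A^{\ast}\na_A$. Fix a point $x\in X$ and choose an orthonormal frame $\{e_i\}$ of $TX$ near $x$ with dual coframe $\{e^i\}$, together with a trivialization of $P$ near $x$ in which the connection $1$-form of $A$ vanishes at $x$ and the Christoffel symbols of the Levi-Civita connection vanish at $x$ (a synchronous frame). At $x$ all algebraic terms arising from derivatives of the frame or from the connection matrix of $A$ drop out, so only second covariant derivatives need to be tracked.

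In this frame $d_A=\sum_i e^i\wedge\na_{A,e_i}$ and $d_A^{\ast}=-\sum_i\iota_{e_i}\na_{A,e_i}$. Plugging these expressions into $(d_A^{\ast}d_A+d_A d_A^{\ast})\w$ for a $\mathfrak{g}_P$-valued $1$-form $\w$ and using the pointwise identity $\iota_{e_i}(e^j\wedge\eta)=\delta_{ij}\eta-e^j\wedge\iota_{e_i}\eta$, the $i=j$ contributions collapse to $-\sum_i\na_{A,e_i}\na_{A,e_i}\w$, which at $x$ is exactly $\na_A^{\ast}\na_A\w$. The remaining terms combine into antisymmetrized second covariant derivatives of the form $\sum_{i,j}e^i\wedge\iota_{e_j}\,[\na_{A,e_i},\na_{A,e_j}]\w$.

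The commutator $[\na_{A,e_i},\na_{A,e_j}]$ acting on a section of $T^{\ast}X\otimes\mathfrak{g}_P$ splits as the Riemann curvature $R(e_i,e_j)$ acting on the $T^{\ast}X$ factor plus the adjoint action of $F_A(e_i,e_j)$ on the $\mathfrak{g}_P$ factor. The Riemann piece reproduces the computation behind the classical scalar Weitzenb\"ock identity on $1$-forms: after contraction with $e^i\wedge\iota_{e_j}$ and use of the first Bianchi identity it reduces to $Ric(\w)$. The $F_A$ piece is the curvature contraction $\sum_{i,j}e^i\wedge\iota_{e_j}[F_A(e_i,e_j),\w]$, which can be rewritten invariantly as $\ast[\ast F_A,\w]$ using the standard Hodge identity $\iota_v\alpha=\ast(v^{\flat}\wedge\ast\alpha)$ for interior product of a $2$-form against a vector in dimension four.

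The single genuinely delicate point, and the one I would check carefully, is the bookkeeping of sign and orientation conventions in this last step so that the curvature contraction appears with the precise sign $+\ast[\ast F_A,\cdot]$ stated in (\ref{40}) rather than its negative. As sanity checks, in the abelian case the bracket term disappears and one recovers the classical scalar Weitzenb\"ock formula on $1$-forms, while switching off the Riemannian curvature leaves only the pure gauge-theoretic Bochner identity $d_A^{\ast}d_A+d_A d_A^{\ast}=\na_A^{\ast}\na_A+\ast[\ast F_A,\cdot]$.
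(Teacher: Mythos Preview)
Your argument is the standard local-frame derivation of the Bochner--Weitzenb\"ock identity on bundle-valued $1$-forms, and it is correct. The paper, however, does not supply any proof of this theorem at all: it is simply stated as a known formula and then invoked in the proof of Proposition~\ref{200}. So there is nothing to compare your approach against; you have filled in a proof where the paper cites a standard identity.

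One small remark on your write-up: when you record the cross terms as $\sum_{i,j}e^i\wedge\iota_{e_j}[\na_{A,e_i},\na_{A,e_j}]\w$, double-check the placement of the indices on $e^i$ versus $\iota_{e_j}$; depending on which of the two summands you expanded first, the natural output is $\sum_{i,j}e^j\wedge\iota_{e_i}[\na_{A,e_i},\na_{A,e_j}]\w$, which differs from what you wrote by a sign after relabelling. You already flag the sign bookkeeping as the delicate step, so this is exactly the place to verify carefully that the final curvature term comes out as $+\ast[\ast F_A,\cdot]$ with the conventions used in the paper.
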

As a simple application of Weitezenb\"{o}ck formula,\ we have the following proposition,
\begin{prop}\label{200}
If $(A,\phi)$ is a solution of Kapustin-Witten equations,\ then
\begin{equation}\label{3}
\na_{A}^{\ast}\na_{A}\phi+Ric\circ\phi+2\ast\big{[}\ast(\phi\wedge\phi)^{+},\phi\big{]}=0.
\end{equation}
\end{prop}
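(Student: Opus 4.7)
The plan is to apply the Weitzenb\"ock formula (\ref{40}) to the 1-form $\phi$ and systematically use all three Kapustin-Witten equations to simplify the result.

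First, I would specialize (\ref{40}) to $\phi$, obtaining
\begin{equation*}
d_{A}^{\ast}d_{A}\phi+d_{A}d_{A}^{\ast}\phi=\nabla_{A}^{\ast}\nabla_{A}\phi+\mathrm{Ric}(\phi)+\ast[\ast F_{A},\phi].
\end{equation*}
The third KW equation $d_{A}\ast\phi=0$ (equivalently $d_{A}^{\ast}\phi=0$) kills the second term on the left. The second KW equation $(d_{A}\phi)^{-}=0$ forces $d_{A}\phi$ to be self-dual, so $\ast d_{A}\phi=d_{A}\phi$. Using the identity $d_{A}^{\ast}=-\ast d_{A}\ast$ on 2-forms in dimension four, I then compute
\begin{equation*}
d_{A}^{\ast}d_{A}\phi=-\ast d_{A}\ast d_{A}\phi=-\ast d_{A}(d_{A}\phi)=-\ast[F_{A},\phi],
\end{equation*}
where I invoke the standard identity $d_{A}^{2}\phi=[F_{A},\phi]$ for Lie-algebra-valued forms.

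Next, I would exploit the self-dual/anti-self-dual decomposition $F_{A}=F_{A}^{+}+F_{A}^{-}$ together with $\ast F_{A}=F_{A}^{+}-F_{A}^{-}$ to write
\begin{equation*}
[F_{A},\phi]=2[F_{A}^{+},\phi]-[\ast F_{A},\phi].
\end{equation*}
Substituting this back yields
\begin{equation*}
-2\ast[F_{A}^{+},\phi]+\ast[\ast F_{A},\phi]=\nabla_{A}^{\ast}\nabla_{A}\phi+\mathrm{Ric}(\phi)+\ast[\ast F_{A},\phi],
\end{equation*}
and the two $\ast[\ast F_{A},\phi]$ terms cancel cleanly. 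Finally, the first KW equation $F_{A}^{+}=(\phi\wedge\phi)^{+}$ converts the remaining left-hand term into $-2\ast[(\phi\wedge\phi)^{+},\phi]$, and since $(\phi\wedge\phi)^{+}$ is self-dual it equals $\ast(\phi\wedge\phi)^{+}$, yielding (\ref{3}).

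I do not anticipate a substantive obstacle: the argument is essentially a bookkeeping exercise once one recognizes the key trick of step two, namely rewriting $[F_{A},\phi]$ so that the $[\ast F_{A},\phi]$ contribution precisely cancels the Weitzenb\"ock curvature term. The only subtle point is ensuring consistency of sign conventions ($d_{A}^{\ast}=-\ast d_{A}\ast$ on 2-forms in four dimensions and the bracket/wedge convention for $d_{A}^{2}\phi=[F_{A},\phi]$); with those fixed, the computation collapses to the stated identity.
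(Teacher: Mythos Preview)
Your proposal is correct and follows essentially the same route as the paper's proof: both apply the Weitzenb\"ock formula (\ref{40}) to $\phi$, use $d_{A}^{\ast}\phi=0$ and the self-duality of $d_{A}\phi$ to deduce $d_{A}^{\ast}d_{A}\phi=-\ast[F_{A},\phi]$, and then combine the $\ast[F_{A},\phi]$ and $\ast[\ast F_{A},\phi]$ terms into $2\ast[F_{A}^{+},\phi]$ before invoking $F_{A}^{+}=(\phi\wedge\phi)^{+}$. The only cosmetic difference is that you isolate the identity $[F_{A},\phi]=2[F_{A}^{+},\phi]-[\ast F_{A},\phi]$ explicitly, whereas the paper simply adds the two curvature terms directly.
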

\begin{proof}
From $(d_{A}\phi)^{-}=0$,\ we have $d_{A}\phi=\ast d_{A}\phi$.\ Then we have
$$d_{A}d_{A}\phi=d_{A}\ast d_{A}\phi.$$
Since $d_{A}^{\ast}=-\ast d_{A}\ast$,\ we obtain
\begin{equation}\label{41}
d^{\ast}_{A}d_{A}\phi=-\ast[F_{A},\phi].
\end{equation}
From (\ref{40}) and (\ref{41}),\ we completes the proof of Proposition \ref{200}.
\end{proof}

Using a technique of Taubes (\cite{T4}, p.166) also described in (\cite{BM},\ p.23--24),\ we combine the Weitzenb\"{o}ck formula with Morrey's mean-value inequality to deduce a bound on $\|\phi\|_{L^{\infty}}$ in terms of $\|\phi\|_{L^{2}}$.\ First,\ we recall a mean value inequality as follow:

\begin{thm}(\cite{BM} Theorem 3.1.2)\label{116}
Let $X$ be a smooth closed Rimeannian manifold.\ For all $\la>0$ there are constants $\{C_{\la}\}$ with the following property.\ Let $V\rightarrow X$ be any real vector bundle equipped with a metric,\ $A\in\mathcal{A}_{V}$ any smooth metric-compatible connection,\ and $\sigma\in\Om^{0}(X,V)$ any smooth section with satisfies the pointwise inequality
$$\langle\sigma,\na^{\ast}_{A}\na_{A}\sigma\rangle\leq\la|\sigma|^{2}.$$
Then $\sigma$ satisfies the estimate
$$\|\sigma\|_{L^{\infty}}\leq C_{\la}\|\sigma\|_{L^{2}}.$$
\end{thm}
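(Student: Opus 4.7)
The plan is to reduce the bundle-valued differential inequality to a scalar elliptic inequality and then invoke Moser's mean-value inequality. Set $u:=|\sigma|^{2}\geq 0$. The standard Bochner identity (obtained by the product rule together with metric-compatibility of $\na_{A}$) reads
\begin{equation*}
-\tfrac{1}{2}\Delta_{g} u \;=\; \langle\sigma,\na_{A}^{\ast}\na_{A}\sigma\rangle \;-\; |\na_{A}\sigma|^{2},
\end{equation*}
where $\Delta_{g}$ is the Laplace--Beltrami operator on $X$. Applying the hypothesis to the first term on the right and discarding the non-positive second term gives
\begin{equation*}
-\Delta_{g} u \;\leq\; 2\la\,u \qquad \text{pointwise on } X.
\end{equation*}

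Thus $u$ is a smooth, non-negative subsolution of a linear uniformly elliptic inequality on the closed manifold $X$. I would then apply Moser's mean-value inequality for subsolutions. Pick a finite cover of $X$ by geodesic balls $\{B_{r}(x_{i})\}_{i=1}^{N}$ small enough that each lies in a normal coordinate chart with uniformly controlled metric coefficients and Sobolev constants. On each ball, running the standard Moser iteration with test functions of the form $u^{q}\eta^{2}$ yields
\begin{equation*}
\sup_{B_{r/2}(x_{i})} u \;\leq\; C_{\la}\,\|u\|_{L^{1}(B_{r}(x_{i}))},
\end{equation*}
and summing over the finite cover produces $\|u\|_{L^{\infty}(X)}\leq C_{\la}'\,\|u\|_{L^{1}(X)} = C_{\la}'\,\|\sigma\|_{L^{2}(X)}^{2}$. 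Taking square roots yields the claimed estimate with constant $\sqrt{C_{\la}'}$.

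The main technical burden is the Moser iteration itself: one bootstraps $L^{p}$ to $L^{p\kappa}$ via Sobolev embedding (with $\kappa>1$) along a geometric sequence of exponents tending to infinity, and then closes the loop using the elliptic inequality. This is entirely standard for subsolutions of linear elliptic equations with bounded coefficients, and uniformity of constants across the finite cover is automatic on a smooth closed manifold (bounded injectivity radius, Ricci bounds, uniform Sobolev constants). Accordingly, the only genuinely new step here is the Bochner-style reduction to a scalar inequality; the remainder is imported from De~Giorgi--Nash--Moser theory, which is precisely what the author invokes via the citation to \cite{BM}.
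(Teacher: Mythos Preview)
The paper does not supply its own proof of this theorem; it is quoted verbatim as \cite{BM} Theorem 3.1.2 and used as a black box. The only hint the paper gives is the sentence immediately preceding the statement, which attributes the method to Taubes and describes it as ``combining the Weitzenb\"{o}ck formula with Morrey's mean-value inequality.'' Your proposal is exactly this: the Bochner/Weitzenb\"{o}ck-type computation $-\tfrac12\Delta_{g}|\sigma|^{2}=\langle\sigma,\na_{A}^{\ast}\na_{A}\sigma\rangle-|\na_{A}\sigma|^{2}$ reduces the bundle inequality to a scalar subsolution, after which a De~Giorgi--Nash--Moser (or Morrey) mean-value estimate on a finite cover yields the $L^{\infty}$--$L^{2}$ bound. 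This is the standard argument and is, as far as one can tell from the paper's remarks, precisely the route taken in \cite{BM}. Your write-up is correct; just be explicit about which sign convention you use for $\Delta_{g}$ so that the direction of the inequality is unambiguous.
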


\begin{thm}\label{16}
Let $X$ be a compact $4$-dimensional Riemannian manifold.\ There exists a constant,\ $C=C(X)$,\ with the following property.\ For any principal bundle $P\rightarrow X$ and any $L^{2}_{1}$ solution $(A,\phi)$ to the Kapustin-Witten equations,
\begin{equation}\nonumber
\|\phi\|_{L^{\infty}(X)}\leq C\|\phi\|_{L^{2}(X)}.
\end{equation}
\end{thm}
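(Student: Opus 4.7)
The plan is to apply the mean-value inequality of Theorem \ref{116} directly to the extra field $\phi$, viewed as a section of $T^{\ast}X\otimes\mathfrak{g}_{P}$ with the induced fiber metric and metric-compatible covariant derivative $\na_{A}$. For this to yield the claimed bound, I must verify a pointwise estimate
$$\langle \phi,\na_{A}^{\ast}\na_{A}\phi\rangle \leq \la\,|\phi|^{2}$$
for a constant $\la=\la(X,g)$ independent of the solution $(A,\phi)$.

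Substituting the identity (\ref{3}) of Proposition \ref{200} gives, pointwise,
$$\langle \phi,\na_{A}^{\ast}\na_{A}\phi\rangle = -\langle Ric\circ\phi,\phi\rangle - 2\langle \ast[\ast(\phi\wedge\phi)^{+},\phi],\phi\rangle.$$
The Ricci contribution is bounded by $\|Ric\|_{C^{0}(X)}\,|\phi|^{2}$ since $X$ is closed, so the argument reduces to controlling the sign of the cubic term. The key algebraic identity I would establish is
$$\langle \ast[\ast(\phi\wedge\phi)^{+},\phi],\phi\rangle = 2\,|(\phi\wedge\phi)^{+}|^{2}\geq 0.$$
This identity rests on three ingredients: the self-duality $\ast(\phi\wedge\phi)^{+}=(\phi\wedge\phi)^{+}$; self-adjointness of $\ast$ on forms in dimension four; and total antisymmetry of the structure constants of a compact Lie algebra $\mathfrak{g}$ equipped with an ad-invariant inner product, equivalent to the cyclic symmetry of $\langle [X,Y],Z\rangle$. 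Combined with the anti-commutativity of the wedge product of 1-forms, an expansion in a local orthonormal frame of $\mathfrak{g}$ collapses the left-hand side to $2\,\langle (\phi\wedge\phi)^{+},\phi\wedge\phi\rangle$, and the orthogonality of self- and anti-self-dual parts identifies this with $2\,|(\phi\wedge\phi)^{+}|^{2}$.

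Granting the identity, the cubic term is non-positive and can simply be dropped, leaving
$$\langle \phi,\na_{A}^{\ast}\na_{A}\phi\rangle \leq \|Ric\|_{C^{0}(X)}\,|\phi|^{2}.$$
Theorem \ref{116} with $\la=\|Ric\|_{C^{0}(X)}$ then immediately produces $\|\phi\|_{L^{\infty}(X)}\leq C\|\phi\|_{L^{2}(X)}$ for some $C=C(X,g)$. The $L^{2}_{1}$ regularity hypothesis is enough to justify the pointwise computation after a standard elliptic bootstrap from the Kapustin-Witten equations. The principal obstacle is the algebraic sign identity for the cubic term: once that combinatorial book-keeping is done, everything else is routine Bochner-Weitzenb\"{o}ck manipulation, and without the sign one would only obtain $\langle \phi,\na_{A}^{\ast}\na_{A}\phi\rangle\leq C|\phi|^{2}+C|\phi|^{4}$, which is insufficient for a linear mean-value step.
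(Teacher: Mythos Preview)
Your proposal is correct and follows essentially the same route as the paper: both take the inner product of the Weitzenb\"{o}ck identity (\ref{3}) with $\phi$, use the sign identity $\langle \ast[\ast(\phi\wedge\phi)^{+},\phi],\phi\rangle = 2|(\phi\wedge\phi)^{+}|^{2}\geq 0$ (which the paper records as equation (\ref{4}) without further comment), discard the resulting nonpositive quartic term, bound the Ricci term by a geometric constant, and then invoke Theorem~\ref{116}. The paper handles the $L^{2}_{1}$ regularity hypothesis by citing Theorem~\ref{42}, matching your elliptic-bootstrap remark.
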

\begin{proof}
By Theorem \ref{42},\ we may assume that $(A,\phi)$ is smooth.\ Form (\ref{3}),\ in pointwise,
\begin{equation}\label{4}
\langle\na^{\ast}_{A}\na_{A}\phi,\phi\rangle=-{\color{red}\big{(}}\langle Ric\circ\phi,\phi\rangle+4|(\phi\wedge\phi)^{+}|^{2}{\color{red}\big{)}}.
\end{equation}
Since $X$ is compact,\ we get a pointwise bound of the form
\begin{equation}\label{115}
\langle\na_{A}^{\ast}\na_{A}\phi,\phi\rangle\leq\la|\phi|^{2}
\end{equation}
for a constant $\la$ depending on Riemannian curvature of $X$.\ From (\ref{115}) and Theorem \ref{116},\ we obtain
$$\|\phi\|_{L^{\infty}(X)}\leq C\|\phi\|_{L^{2}(X)}.$$
\end{proof}

\subsection{Flat $G_{\C}$-connections}

Let $P\rightarrow X$ be a principal $G$-bundle with $G$ being a compact Lie group with $p_{1}(P)$ is zero,\ then the solutions $(A,\phi)$ to the Kapustin-Witten equations are flat $G_{\C}$-connections:
\begin{equation}\nonumber
F_{A}-\phi\wedge\phi=0\quad and\quad d_{A}\phi=0\quad and\quad d_{A}\ast\phi=0.
\end{equation}
First,\ we review a key result due to Uhlenbeck for the connections with $L^{p}$-small curvature ($2p>n$)\cite{U2}.
\begin{thm}(\cite{U2} Corollary 4.3)\label{33}\label{83}\label{117}
Let $X$ be a closed,\ smooth manifold of dimension $n\geq2$ and endowed with a Riemannian metric,\ $g$,\ and $G$ be a compact Lie group,\ and $2p>n$.\ Then there are constants,\ $\varepsilon=\varepsilon(n,g,G,p)\in(0,1]$ and $C=C(n,g,G,p)\in[1,\infty)$,\ with the following significance.\ Let $A$ be a $L^{p}_{1}$ connection on a principal $G$-bundle $P$ over $X$.\ If
$$\|F_{A}\|_{L^{p}(X)}\leq\varepsilon,$$
then there exist a flat connection,\ $\Ga$,\ on $P$ and a gauge transformation $g\in L^{p}_{2}(X)$ such that

(1) $d^{\ast}_{\Ga}\big{(}g^{\ast}(A)-\Ga\big{)}=0\ on\ X,$

(2) $\|g^{\ast}(A)-\Ga\|_{L^{p}_{1,\Ga}}\leq C\|F_{A}\|_{L^{p}(X)}\ and$

(3) $\|g^{\ast}(A)-\Ga\|_{L^{n/2}_{1,\Ga}}\leq C\|F_{A}\|_{L^{n/2}(X)}.$

\end{thm}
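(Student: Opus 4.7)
The plan is to combine Uhlenbeck's local Coulomb gauge fixing lemma with a compactness/implicit-function-theorem argument to produce a global flat connection $\Gamma$ and gauge transformation $g$ on $X$.

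First, I would invoke the local gauge-fixing lemma on small geodesic balls $B \subset X$: provided $\|F_{A}\|_{L^{p}(B)}$ is below an absolute threshold $\varepsilon_{0}$, one obtains a local gauge $g_{B}$ with $g_{B}^{\ast}A$ in Coulomb gauge on $B$ and $\|g_{B}^{\ast}A\|_{L^{p}_{1}(B)} \leq C\|F_{A}\|_{L^{p}(B)}$. Fix a finite cover $\{B_{i}\}_{i=1}^{N}$ of $X$ by such balls, depending only on $(X,g)$, and choose $\varepsilon$ small enough that the local hypothesis is met on each $B_{i}$.

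Second, produce the flat $\Gamma$ globally by a compactness scheme. If the assertion failed for every $\varepsilon > 0$, one could extract a sequence $A_{n}$ with $\|F_{A_{n}}\|_{L^{p}} \to 0$ for which no admissible $(\Gamma_{n},g_{n})$ exists. The local gauges $g_{n,i}$ give uniform $L^{p}_{1}$-bounds on each $B_{i}$; Rellich compactness together with patching of transition functions on overlaps (forced to approach parallel sections by essentially unique Coulomb gauge in the small-norm regime) extracts a subsequential limit flat connection $\Gamma_{\infty}$ on $P$. For $n$ large, the implicit function theorem applied to $g \mapsto d_{\Gamma_{\infty}}^{\ast}(g^{\ast}A_{n} - \Gamma_{\infty})$ yields the desired $g_{n}$: the linearization at $g = \mathrm{id}$ is the covariant Laplacian $\Delta_{\Gamma_{\infty}} = d_{\Gamma_{\infty}}^{\ast}d_{\Gamma_{\infty}}$ on $\Omega^{0}(\mathfrak{g}_{P})$, which is Fredholm and invertible modulo the (finite-dimensional) flat stabilizer. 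This contradicts the supposed failure.

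Third, derive the estimates (2) and (3) by standard elliptic bootstrap. Setting $a := g^{\ast}A - \Gamma$, the identities $d_{\Gamma}^{\ast}a = 0$ and $F_{A} = d_{\Gamma}a + a \wedge a$ give
\[
\|a\|_{L^{p}_{1,\Gamma}} \leq C\|(d_{\Gamma} + d_{\Gamma}^{\ast})a\|_{L^{p}} \leq C\|F_{A}\|_{L^{p}} + C\|a \wedge a\|_{L^{p}}.
\]
Since $2p > n$, Sobolev embedding yields $L^{p}_{1} \hookrightarrow L^{\infty}$ and $\|a \wedge a\|_{L^{p}} \leq C\|a\|_{L^{p}_{1}}^{2}$, which can be absorbed once $\|a\|_{L^{p}_{1}}$ is small, giving (2). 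Re-running the bootstrap with $L^{n/2}$ in place of $L^{p}$, and using the already-established pointwise control on $a$, delivers (3). The main obstacle is the global step of constructing $\Gamma$: local Coulomb gauges do not automatically patch, and the smallness of $\|F_{A}\|_{L^{p}}$ is precisely what allows the weak-compactness argument to succeed and the transition functions to converge to a flat-connection-compatible cocycle; this is the substantive core of Uhlenbeck's result.
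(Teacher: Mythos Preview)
The paper does not supply its own proof of this statement: it is quoted verbatim as Corollary~4.3 of Uhlenbeck~\cite{U2} and then invoked as a black box in the proof of the theorem immediately following it. So there is nothing in the paper to compare your argument against.

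That said, your outline is broadly in the spirit of Uhlenbeck's approach (local Coulomb gauges on a finite cover, patching, and elliptic estimates with quadratic absorption). Two technical points deserve care. First, the inequality
\[
\|a\|_{L^{p}_{1,\Ga}} \leq C\|(d_{\Ga}+d_{\Ga}^{\ast})a\|_{L^{p}}
\]
is false as stated, since the first-order operator $d_{\Ga}+d_{\Ga}^{\ast}$ on $\Om^{1}(X,\mathfrak{g}_{P})$ has a kernel (the $\Ga$-harmonic $1$-forms); you must either include a lower-order term $\|a\|_{L^{p}}$ on the right and later absorb it, or first arrange that $a$ lies in a complement to this kernel. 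Second, in the implicit-function step the linearization $d_{\Ga}^{\ast}d_{\Ga}$ on $\Om^{0}(X,\mathfrak{g}_{P})$ has the stabilizer of $\Ga$ as its kernel; you acknowledge this, but the actual construction requires working on a slice transverse to the $\G_{P}$-orbit and also allowing $\Ga$ itself to vary within the (finite-dimensional) moduli of flat connections, not just the gauge transformation $g$. Uhlenbeck's original proof handles the global step by a direct continuity/deformation argument rather than the compactness-contradiction scheme you propose, but either route can be made to work once these kernels are properly accounted for.
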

\begin{thm}\label{109}
Let $X$ be a closed,\ oriented,\ $4$-dimensional Riemannian manifold with Riemannaian metric $g$,\ let $P\rightarrow X$ be a principal $G$-bundle with $G$ being a compact Lie group  with $p_{1}(P)=0$.\ There exist a positive constant,\ $C$,\ with the following significance.\ If $(A,\phi)$ is an $L^{2}_{1}$ solution of $M^{C}_{KW}(P,g)$,\ then $A$ is a flat connection.
\end{thm}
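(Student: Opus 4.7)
Since $p_1(P)=0$, the identity computation recalled in the introduction forces every Kapustin--Witten solution $(A,\phi)$ to satisfy the stronger flat $G_{\C}$-connection equations
\begin{equation*}
F_A=\phi\wedge\phi,\qquad d_A\phi=0,\qquad d_A^{\ast}\phi=0.
\end{equation*}
The task is therefore to verify $F_A\equiv 0$ once $\|\phi\|_{L^2(X)}$ is small. My plan is to combine the universal $L^{\infty}$-bound of Theorem \ref{16} with Uhlenbeck's small-curvature gauge-fixing theorem.

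By Theorem \ref{16} one has $\|\phi\|_{L^{\infty}(X)}\le C_1\|\phi\|_{L^2(X)}\le C_1 C$. Combined with the pointwise bound $|F_A|=|\phi\wedge\phi|\le c_0|\phi|^{2}$, this yields, for every $p\in[1,\infty]$,
\begin{equation*}
\|F_A\|_{L^p(X)}\le c_0\|\phi\|_{L^{\infty}}\|\phi\|_{L^{2p/(p-1)}}\le C_2\|\phi\|_{L^2}^{2}.
\end{equation*}
Fixing some $p\in(2,\infty)$ (so that $2p>n=4$) and choosing $C$ small enough, $\|F_A\|_{L^p}$ can be driven below the constant $\varepsilon$ of Theorem \ref{83}. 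That theorem then supplies a flat connection $\Ga$ on $P$ and a gauge transformation $g$ such that $A':=g^{\ast}A=\Ga+a$ is in Coulomb gauge, $d_{\Ga}^{\ast}a=0$, with $\|a\|_{L^2_{1,\Ga}}\le C_3\|F_A\|_{L^2}\le C_4\|\phi\|_{L^2}^{2}$.

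In this new gauge the transformed pair $(A',\phi'=g^{-1}\phi g)$ still solves the flat $G_{\C}$-system, whence $F_{A'}=d_{\Ga}a+a\wedge a=\phi'\wedge\phi'$, while $\phi'$ inherits the $L^2$- and $L^{\infty}$-bounds of $\phi$. The closing step is to apply Proposition \ref{200} to $\phi'$, pair with $\phi'$ in $L^2$, replace two pointwise factors of $|\phi'|$ in the cubic term by $\|\phi'\|_{L^{\infty}}\le C_1 C$, and use the Coulomb estimate on $a$ together with the identity $\|F_A\|_{L^2}^{2}=\|\phi'\wedge\phi'\|_{L^2}^{2}$ to extract a self-referential inequality
\begin{equation*}
\|F_A\|_{L^2(X)}^{2}\le C_5\,\|\phi\|_{L^2(X)}^{2}\,\|F_A\|_{L^2(X)}^{2}.
\end{equation*}
Choosing $C$ so small that $C_5C^{2}<1$ then forces $\|F_A\|_{L^2}=0$, i.e.\ $A$ is flat.

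The principal obstacle is this last absorption step. Potential non-triviality of the $\Ga$-cohomology $H^{1}(X,\mathrm{ad}\,\Ga)$ means one cannot hope to reduce $a$ itself to zero by a spectral-gap argument, so the conclusion must instead be extracted at the level of the curvature $F_A$. The decisive quantitative ingredient is the quadratic estimate $\|F_A\|_{L^p}\lesssim\|\phi\|_{L^2}^{2}$ obtained in the first step: it furnishes the small multiplicative prefactor $\|\phi\|_{L^2}^{2}$ needed to close the Weitzenb\"ock bootstrap and pin the curvature to zero.
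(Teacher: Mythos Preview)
Your first two steps match the paper exactly: Theorem~\ref{16} gives $\|F_A\|_{L^p}\lesssim\|\phi\|_{L^2}^2$ for every $p$, and Uhlenbeck's theorem then produces a flat connection $\Gamma$ with $\|A-\Gamma\|_{L^2_1}\lesssim\|F_A\|_{L^2}$. The gap is in your closing step. Pairing Proposition~\ref{200} with $\phi$ yields an identity in which the curvature-type term $4\|(\phi\wedge\phi)^+\|^2$ already carries a fixed positive sign; replacing factors of $|\phi|$ by $\|\phi\|_{L^\infty}$ only bounds that term from above, which cannot force it to vanish. Since the Ricci contribution is sign-indefinite, this single identity merely reproduces $\|F_A\|^2\lesssim\|\phi\|^2$, which you already had. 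You invoke ``the Coulomb estimate on $a$'' and ``the identity $\|F_A\|^2=\|\phi'\wedge\phi'\|^2$'' but never explain how they combine to manufacture the self-referential inequality $\|F_A\|^2\le C_5\|\phi\|^2\|F_A\|^2$; as written there is no route from your ingredients to that conclusion.

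The mechanism the paper supplies---and the idea absent from your sketch---is a comparison of \emph{two} Weitzenb\"ock identities. For the flat connection $\Gamma$ one has
\[
\|\nabla_\Gamma\phi\|_{L^2}^2+\int_X\langle Ric\circ\phi,\phi\rangle=\|d_\Gamma\phi\|_{L^2}^2+\|d_\Gamma^*\phi\|_{L^2}^2\ge 0,
\]
while for $A$, using $d_A\phi=d_A^*\phi=0$ and $F_A=\phi\wedge\phi$, one gets
\[
\|\nabla_A\phi\|_{L^2}^2+\int_X\langle Ric\circ\phi,\phi\rangle+2\|F_A\|_{L^2}^2=0.
\]
Comparing these two cancels the sign-indefinite Ricci term and leaves $2\|F_A\|_{L^2}^2$ controlled by (a quantity dominated by) $\|\nabla_A\phi-\nabla_\Gamma\phi\|_{L^2}^2=\|[A-\Gamma,\phi]\|_{L^2}^2\le C\|A-\Gamma\|_{L^4}^2\|\phi\|_{L^4}^2\lesssim\|F_A\|_{L^2}^2\|\phi\|_{L^2}^2$, which is exactly the self-referential inequality. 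The decisive point you are missing is that $\Gamma$ is used not merely to bound the size of $a=A-\Gamma$, but to furnish a \emph{second} Bochner formula whose nonnegativity provides the lower anchor against which the absorption argument is run.
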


\begin{proof}
By Theorem \ref{42},\ we can assume $(A,\phi)$ is smooth.\ From the Theorem \ref{16},\ we have
$$\|\phi\|_{L^{\infty}(X)}\leq C_{7}\|\phi\|_{L^{2}(X)},$$
where $C_{7}$ is only dependent on manifold.\ Since $(A,\phi)$ is a solution of the Kapustin-Witten equations,\ we have
$$\|F_{A}\|_{L^{p}(X)}\leq\|\phi\wedge\phi\|_{L^{p}(X)}\leq C_{8}\|\phi\|^{2}_{L^{2}(X)},$$
where $p>2$.\ We can  choose $\|\phi\|_{L^{2}(X)}\leq C$ sufficiently small such that $C_{8}C^{2}\leq\varepsilon$,\ where $\varepsilon$ is the constant in the hypothesis of Theorem \ref{117}.\ Then from Theorem \ref{83},\ there exist a flat connection $\Ga$ such that
$$\|A-\Ga\|_{L^{2}_{1}(X)}\leq C_{9}\|F_{A}\|_{L^{2}(X)}.$$
Using the Weitezenb\"{o}ck formula,\ we have identities
\begin{equation}\label{85}
(d^{\ast}_{\Ga}d_{\Ga}+d_{\Ga}d^{\ast}_{\Ga})\phi=\na_{\Ga}^{\ast}\na_{\Ga}\phi+Ric\circ\phi,
\end{equation}
and
\begin{equation}\label{84}
(d^{\ast}_{A}d_{A}+d_{A}d^{\ast}_{A})\phi=\na_{A}^{\ast}\na_{A}\phi+Ric\circ\phi+\ast[\ast F_{A},\phi],
\end{equation}
which lead to  the following two integral inequalities
\begin{equation}\label{86}
\|\na_{\Ga}\phi\|^{2}_{L^{2}(X)}+\int_{X}\langle Ric\circ\phi,\phi\rangle\geq0.
\end{equation}
and
\begin{equation}\label{87}
\|\na_{A}\phi\|_{L^{2}(X)}^{2}+\int_{X}\langle Ric\circ\phi,\phi\rangle+2\|F_{A}\|^{2}=0.
\end{equation}
On the other hand,\ we also have another integral inequality
\begin{equation}\label{88}
\begin{split}
\|\na_{A}\phi-\na_{\Ga}\phi\|^{2}_{L^{2}(X)}&\leq\|[A-\Ga,\phi]\|^{2}_{L^{2}(X)}\\
&\leq C_{10}\|A-\Ga\|^{2}_{L^{4}(X)}\|\phi\|^{2}_{L^{4}(X)}\\
&\leq C_{11}\|F_{A}\|^{2}_{L^{2}(X)}\|\phi\|^{2}_{L^{2}(X)}.\\
\end{split}
\end{equation}
Combing these inequalities,\ we arrive at
\begin{equation}\nonumber
\begin{split}
0&\leq\|\na_{\Ga}\phi\|_{L^{2}(X)}^{2}+\int_{X}\langle Ric\circ\phi,\phi\rangle\\
&\leq\|\na_{A}\phi\|_{L^{2}(X)}^{2}+\int_{X}\langle Ric\circ\phi,\phi\rangle+\|\na_{A}\phi-\na_{\Ga}\phi\|_{L^{2}(X)}^{2}\\
&\leq(C_{12}\|\phi\|_{L^{2}(X)}^{2}-2)\|F_{A}\|_{L^{2}(X)}^{2}.\\
\end{split}
\end{equation}
We can choose $\|\phi\|_{L^{2}(X)}\leq C$ sufficiently small such that $C_{12}C^{2}\leq1$,\ then $F_{A}$ vanishes.
\end{proof}

\subsection{A vanishing theorem on extra fields}

As usual,\ we define the stabilizer group $\Ga_{A}$ of $A$ in the gauge group $\G_{P}$ by
$$\Ga_{A}:=\{g\in\G_{P}\mid g^{\ast}(A)=A\}.$$
\begin{defi}
A connection $A$ is said to be $irreducible$ if the stabilizer group $\Ga_{A}$ is isomorphic to the centre of $G$,\ and $A$ is called $reducible$ otherwise.
\end{defi}
\begin{lem}(\cite{DK} Lemma 4.3.21)
If $A$ is an irreducible $SU(2)$ or $SO(3)$ anti-self-dual connection on a bundle $E$ over a simply connected four-manifold $X$,\ then the restriction of $A$ to any non-empty open set in $X$ is also irreducible.
\end{lem}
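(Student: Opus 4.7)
I would argue by contradiction. Suppose the restriction $A|_U$ is reducible for some non-empty open $U\subset X$. Differentiating the stabilizer relation $g^{\ast}(A|_U)=A|_U$ for a non-central $g\in\Gamma_{A|_U}$ produces a non-trivial smooth section $\sigma\in\Om^{0}(U,\mathfrak{g}_{E})$ satisfying $\na_{A}\sigma=0$ on $U$; recall that for $SU(2)$ or $SO(3)$, irreducibility of $A$ on $X$ is equivalent to the non-existence of such a global nonzero parallel section of $\mathfrak{g}_{E}$. The plan is therefore to upgrade $\sigma$ to a global parallel section $\tilde\sigma\in\Om^{0}(X,\mathfrak{g}_{E})$ and derive a contradiction.

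After shrinking $U$ if necessary, take it to be a connected geodesic ball, normalize $|\sigma|\equiv1$ (constant since $\na_{A}\sigma=0$), fix $x_{0}\in U$, and attempt to define $\tilde\sigma(x):=P_{\gamma}(\sigma(x_{0}))$ via parallel transport along any piecewise smooth path $\gamma$ from $x_{0}$ to $x$. Well-definedness is equivalent to $\sigma(x_{0})$ being fixed by the holonomy group $\mathrm{Hol}_{x_{0}}(A)$ acting in the adjoint representation. Because $X$ is simply connected, $\mathrm{Hol}_{x_{0}}(A)$ coincides with the restricted holonomy $\mathrm{Hol}_{x_{0}}^{0}(A)$, whose Lie algebra, by the Ambrose--Singer theorem, is spanned by the elements $P_{\tau}^{-1}\circ F_{A}(y)(u,v)\circ P_{\tau}$ taken over piecewise smooth paths $\tau$ from $x_{0}$ to $y\in X$ and $u,v\in T_{y}X$. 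It therefore suffices to show that every such generator commutes with $\sigma(x_{0})$.

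On paths that stay inside $U$ this is automatic: using $\na_{A}^{2}=F_{A}\cdot$ on the adjoint bundle together with $\na_{A}\sigma=0$ yields $[F_{A},\sigma]=0$ pointwise on $U$, and parallel transport inside $U$ sends $\sigma(y)$ to $\sigma(x_{0})$ and hence sends the centralizer of $\sigma(y)$ to the centralizer of $\sigma(x_{0})$. The nontrivial step is to propagate this commutation from $U$ to the rest of $X$. I would do this by a unique continuation argument: on a geodesic ball $B$ overlapping $U$, extend $\sigma$ by radial parallel transport from a base point in $U\cap B$ to obtain $\tilde\sigma_{\mathrm{loc}}$, and form the obstruction $\alpha:=[F_{A},\tilde\sigma_{\mathrm{loc}}]$, which vanishes on $U\cap B$. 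Combining the Bianchi identity $d_{A}F_{A}=0$ with the Yang--Mills identity $d_{A}^{\ast}F_{A}=0$ (a consequence of $F_{A}^{+}=0$), and working in a Coulomb or radial gauge so that $A$ itself satisfies an elliptic system, one obtains a second-order elliptic differential inequality of Aronszajn type for $\alpha$. Aronszajn's unique continuation theorem then forces $\alpha\equiv 0$ on $B$; covering $X$ by a chain of such geodesic balls extends the vanishing to all of $X$.

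The main obstacle I anticipate is precisely this unique continuation step: the radial extension $\tilde\sigma_{\mathrm{loc}}$ is only guaranteed to be parallel along radial geodesics, so one has to exploit the ASD equation carefully, together with the second Bianchi identity and a convenient gauge, in order to verify the hypotheses of Aronszajn's theorem. Granting this, $\sigma(x_{0})$ is annihilated by the entire Lie algebra of $\mathrm{Hol}_{x_{0}}^{0}(A)$, the parallel-transport formula defines $\tilde\sigma$ unambiguously on $X$, and $\tilde\sigma$ is a global, nowhere-vanishing solution of $\na_{A}\tilde\sigma=0$, contradicting the irreducibility of $A$.
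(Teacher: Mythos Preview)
The paper does not prove this lemma; it is simply quoted from \cite{DK} (Lemma~4.3.21) and used as a black box. So there is no in-paper argument to compare against, and I will assess your proposal on its own and against the Donaldson--Kronheimer approach.

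Your reduction via Ambrose--Singer is sound, and you are right that it suffices to propagate $[F_A,\tilde\sigma_{\mathrm{loc}}]=0$ from $U\cap B$ to $B$: once that holds, a short curvature computation (using $\nabla_{A,r}\tilde\sigma_{\mathrm{loc}}=0$ and $[F_A,\tilde\sigma_{\mathrm{loc}}]=0$ to show the angular covariant derivatives are radially parallel and vanish at the centre) gives that $\tilde\sigma_{\mathrm{loc}}$ is genuinely parallel on $B$, and the open--closed argument runs. The gap is exactly where you flag it, and it is real rather than cosmetic. With $\alpha=[F_A,\tilde\sigma_{\mathrm{loc}}]$ and $F_A$ anti-self-dual (so $\ast\alpha=-\alpha$), the identities $d_AF_A=0$ and $d_A^{\ast}F_A=0$ give
\[
d_A\alpha=[F_A,\,d_A\tilde\sigma_{\mathrm{loc}}],\qquad d_A^{\ast}\alpha=\ast[F_A,\,d_A\tilde\sigma_{\mathrm{loc}}],
\]
so $|(d_A+d_A^{\ast})\alpha|\le C\,|F_A|\,|\nabla_A\tilde\sigma_{\mathrm{loc}}|$. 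But $|\nabla_A\tilde\sigma_{\mathrm{loc}}|$ is \emph{not} controlled by $|\alpha|$: in radial gauge the angular derivative is $[A_\theta,\tilde\sigma_{\mathrm{loc}}]$ with $A_\theta=\int_0^r F_{r\theta}\,dr'$, which sees the full curvature, not only its component orthogonal to $\tilde\sigma_{\mathrm{loc}}$. Passing to second order, or adjoining $\beta:=d_A\tilde\sigma_{\mathrm{loc}}$ to form a coupled system, does not close the estimate either, since $d_A^{\ast}\beta=\Delta_A\tilde\sigma_{\mathrm{loc}}$ brings in derivatives of $A$ that are again not dominated by $|\alpha|+|\beta|$. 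No choice of gauge for $A$ alone fixes this, because the offending quantity is $\nabla_A\tilde\sigma_{\mathrm{loc}}$, not $A$. So the Aronszajn hypothesis fails for $\alpha$, and the scheme as written does not go through.

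Donaldson--Kronheimer apply unique continuation to a different object, for which the system \emph{is} closed. One extends the non-central stabilising transformation $g\in\Gamma_{A|_U}$ smoothly to $\tilde g$ on $X$; then $\tilde g^{\ast}A$ is again anti-self-dual and agrees with $A$ on $U$. Writing $a:=\tilde g^{\ast}A-A$ and putting it locally into Coulomb gauge relative to $A$, one has $d_A^{\ast}a=0$ together with $d_A^{+}a=-(a\wedge a)^{+}$. Here the right-hand side is purely \emph{algebraic} in $a$, so for the first-order elliptic operator $d_A^{\ast}\oplus d_A^{+}$ one gets the clean differential inequality needed for Aronszajn, forcing $a\equiv 0$ and producing a non-central global element of $\Gamma_A$, the desired contradiction. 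The structural difference with your attempt is that DK's unknown $a$ satisfies an elliptic system whose nonlinearity is zero-order in $a$ itself, whereas your $\alpha$ is governed by a system that couples inextricably to the independent field $\nabla_A\tilde\sigma_{\mathrm{loc}}$.
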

\begin{thm}\label{80}
Let $X$ be a simply-connected Riemannian four-manifold,\ let $P\rightarrow X$ be an $SU(2)$ or $SO(3)$ principal bundle.\ If $A\in\mathcal{A}_{P}$ is irreducible anti-self-dual connection and $\phi\in\Om^{1}(X,\mathfrak{g}_{P})$ satisfy
\begin{equation}\nonumber
\phi\wedge\phi=0\quad and\quad d_{A}\phi=0\quad and\quad d^{\ast}_{A}\phi=0
\end{equation}
then $\phi=0$.
\end{thm}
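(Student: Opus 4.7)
The plan is to exploit the algebraic rank-one character of $\phi$ forced by $\phi\wedge\phi=0$, together with the harmonic conditions $d_{A}\phi=0$ and $d_{A}^{\ast}\phi=0$, to produce an $A$-parallel line subbundle of the adjoint bundle on the open locus $U:=\{x\in X:\phi(x)\neq 0\}$. By the lemma preceding the theorem, such a reduction contradicts the irreducibility of $A|_{U}$, forcing $U=\emptyset$.

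In a local orthonormal coframe write $\phi=\sum_{i}e^{i}\otimes\phi_{i}$; then $\phi\wedge\phi=0$ is equivalent to $[\phi_{i},\phi_{j}]=0$ for all $i,j$. Since $\mathfrak{su}(2)\cong\mathfrak{so}(3)$ has only trivial or one-dimensional abelian subalgebras, the pointwise image of $\phi(x):T_{x}X\to\mathfrak{g}_{P,x}$ is at most one-dimensional. On $U$ this image is a smooth real line subbundle $L\subset\mathfrak{g}_{P}|_{U}$ with orthogonal splitting $\mathfrak{g}_{P}|_{U}=L\oplus L^{\perp}$. Locally choose a unit section $e$ of $L$ and write $\phi=\alpha\,e$ for a real $1$-form $\alpha$ with $|\alpha|=|\phi|>0$ on $U$; since $|e|\equiv 1$, the section $d_{A}e$ is a $1$-form valued in $L^{\perp}$. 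Expanding $d_{A}(\alpha\,e)=d\alpha\otimes e-\alpha\wedge d_{A}e$ and projecting on $L$ and $L^{\perp}$, the equation $d_{A}\phi=0$ yields $d\alpha=0$ and $\alpha\wedge d_{A}e=0$; because $\alpha$ is nowhere vanishing on $U$, the second identity forces $d_{A}e=\alpha\otimes\xi$ for some section $\xi$ of $L^{\perp}$. Similarly, a direct computation of $d_{A}^{\ast}\phi=-\ast d_{A}\ast(\alpha\,e)$, using the four-dimensional identity $\ast(\ast\alpha\wedge d_{A}e)=\pm\langle\alpha,d_{A}e\rangle$, shows that the $L^{\perp}$-component of $d_{A}^{\ast}\phi$ equals $\pm|\alpha|^{2}\xi$. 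Combined with $|\alpha|>0$, the equation $d_{A}^{\ast}\phi=0$ gives $\xi=0$, and hence $d_{A}e=0$ on $U$.

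Since every local unit section of $L$ is $A$-parallel, the orthogonal splitting $\mathfrak{g}_{P}|_{U}=L\oplus L^{\perp}$ is $A$-invariant; equivalently, $A|_{U}$ reduces to a connection whose structure group is the stabilizer of a line in $\mathfrak{g}$, namely $U(1)\subset SU(2)$ or $SO(2)\subset SO(3)$, so the stabilizer $\Gamma_{A|_{U}}$ strictly contains the centre of $G$. On the other hand, $X$ is simply connected and $A$ is an irreducible ASD connection, so by the preceding lemma $A|_{U}$ is irreducible on the non-empty open set $U$, a contradiction. Hence $U=\emptyset$ and $\phi\equiv 0$. The main obstacle is the middle paragraph: one needs \emph{both} $d_{A}\phi=0$ and $d_{A}^{\ast}\phi=0$ to isolate $d_{A}e=0$, and the $L\oplus L^{\perp}$ decomposition of the two equations has to be tracked carefully; once $d_{A}e=0$ is in hand, the irreducibility lemma delivers the contradiction directly.
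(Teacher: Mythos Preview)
Your argument is correct and follows essentially the same route as the paper: write $\phi=\alpha\otimes e$ on the non-vanishing locus, use $d_{A}\phi=0$ and $d_{A}^{\ast}\phi=0$ to force $d_{A}e=0$, and then invoke the Donaldson--Kronheimer restriction lemma to contradict irreducibility. The only cosmetic differences are that the paper obtains the rank-one decomposition via an explicit $\mathfrak{su}(2)$-basis computation and inserts a unique-continuation remark (that $Z^{c}$ is empty or dense), which you rightly do not need since the lemma applies to any non-empty open set; if anything, your treatment of the step from $\alpha\wedge d_{A}e=0$ to $d_{A}e=0$ (via the $L^{\perp}$-component of $d_{A}^{\ast}\phi$) is more explicit than the paper's.
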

\begin{proof}
Since $F^{+}_{A}=0$,\ $\phi\wedge\phi=0$,\ then $\phi$ has at most rank one.\ Let $Z^{c}$ denote the complement of the zero of $\phi$.\ By unique continuation of the elliptic equation $(d_{A}+d^{\ast}_{A})\phi=0$,\ $Z^{c}$ is either empty or dense.

The Lie algebra of $SU(2)$ or $SO(3)$ is three-dimensional,\ with basis $\{\sigma^{i}\}_{i=1,2,3}$ and Lie brackets
$$\{\sigma^{i},\sigma^{j}\}=2\varepsilon_{ijk}\sigma^{k}.$$
In a local coordinate,\ we can set $\phi=\sum_{i=1}^{3}\phi_{i}\sigma^{i}$,\ where $\phi_{i}\in\Om^{1}(X)$.\ Then
$$0=\phi\wedge\phi=2(\phi_{1}\wedge\phi_{2})\sigma^{3}+2(\phi_{3}\wedge\phi_{1})\sigma^{2}+2(\phi_{2}\wedge\phi_{3})\sigma^{1}.$$
We have
\begin{equation}\label{90}
0=\phi_{1}\wedge\phi_{2}=\phi_{3}\wedge\phi_{1}=\phi_{2}\wedge\phi_{3}.
\end{equation}
On $Z^{c}$,\ $\phi$ is non-zero,\ then  without loss of generality we can assume that $\phi_{1}$ is non-zero.\ From (\ref{90}),\ there exist functions $\mu$ and $\nu$ such that
$$\phi_{2}=\mu\phi_{1}\ and\ \phi_{3}=\nu\phi_{1}.$$
Hence,
\begin{equation}\nonumber
\begin{split}
\phi&=\phi_{1}(\sigma^{1}+\mu\sigma^{2}+\nu\sigma^{3})\\
&=\phi_{1}(1+\mu^{2}+\nu^{2})^{1/2}(\frac{\sigma^{1}+\mu\sigma^{2}+\nu\sigma^{3}}{\sqrt{1+\mu^{2}+\nu^{2}}}).\\
\end{split}
\end{equation}
Then on $Z^{c}$ write $\phi=\xi\otimes\w$ for $\xi\in\Om^{0}(Z^{c},\mathfrak{g}_{P})$ with $\langle\xi,\xi\rangle=1$,\ and $\w\in\Om^{1}(Z^{c})$.\ We compute
$$0=d_{A}(\xi\otimes\w)=d_{A}\xi\wedge\w-\xi\otimes d\w,$$
$$0=d_{A}\ast(\xi\otimes\w)=d_{A}\xi\wedge\ast\w-\xi\otimes d\ast\w.$$
Taking the inner product with $\xi$ and using the consequence of $\langle\xi,\xi\rangle=1$ that $\langle\xi,d_{A}\xi\rangle=0$,\ we get $d\w=d^{\ast}{\color{red}\w}=0$.\ It follows that $d_{A}\xi\wedge\w=0$.\ Since $\w$ is nowhere zero along $Z^{c}$,\ we must have $d_{A}\xi=0$ along $Z^{c}$.\ Therefore,\ $A$ is reducible along $Z^{c}$.\ However according to \cite{DK} Lemma 4.3.21,\ $A$ is irreducible along $Z^{c}$.\ This is a contradiction unless $Z^{c}$ is empty.\ Therefore $Z=X$,\ so $\phi$ is identically zero.
\end{proof}
\section{Analytic results}

\subsection{The Kuranishi complex}

The most fundamental tool for understanding moduli space of anti-self-dual connection is the complex associate to an anti-self-dual connection $A_{asd}$ given by
$$0\ra\Om^{0}(\mathfrak{g}_{P})\xrightarrow{d_{A_{asd}}}\Om^{1}(\mathfrak{g}_{P})\xrightarrow{d^{+}_{A_{asd}}}{\Om^{2,+}(\mathfrak{g}_{P})}\ra 0.$$
The complex associated to Kapustin-Witten equations is the form
$$0\ra\Om^{1}(\mathfrak{g}_{P})\xrightarrow{d^{0}_{(A,\phi)}}\Om^{1}(\mathfrak{g}_{P})\times\Om^{1}(\mathfrak{g}_{P})
\xrightarrow{d^{1}_{(A,\phi)}}\Om^{2,-}(\mathfrak{g}_{P})\times\Om^{2,+}(\mathfrak{g}_{P})\ra 0,$$
where $d^{1}_{(A,\phi)}$ is the linearization of $KW$ at the configuration $(A,\phi)$,\ and $d^{0}_{(A,\phi)}$ gives the action of infinitesimal gauge transformations.\ These maps $d^{0}_{(A,\phi)}$ and $d^{1}_{(A,\phi)}$ form a complex whenever $KW(A,\phi)=0$.

The action of $g\in\G_{P}$ on $(A,\phi)\in \mathcal{A}_{P}\times\Om^{1}(\mathfrak{g}_{P})$,\ and the corresponding infinitesimal action of $\xi\in\Om^{0}(\mathfrak{g}_{P})$ is
$$d^{0}_{(A,\phi)}:\Om^{0}(\mathfrak{g}_{P})\ra\Om^{1}(\mathfrak{g}_{P})\times\Om^{1}(\mathfrak{g}_{P}),$$
\begin{equation}\nonumber
d^{0}_{(A,\phi)}(\xi)=(-d_{A}\xi,[\xi,\phi]).
\end{equation}
The linearization of $KW$ at the point $(A,\phi)$ is given by
\begin{equation}\nonumber
d^{1}_{(A,\phi)}(a,b)=\big{(}(d_{A}b+[a,\phi])^{-},(d_{A}a+[b,\phi])^{+}\big{)}.
\end{equation}
We compute
\begin{equation}\nonumber
d^{1}_{(A,\phi)}\circ d^{0}_{(A,\phi)}(\xi)=\big{(}[\xi,(d_{A}\phi)^{-}],[\xi,(F_{A}+\phi\wedge\phi)^{+}]\big{)}.
\end{equation}
The dual complex is
$$0\ra\Om^{2,-}(\mathfrak{g}_{P})\times\Om^{2,+}(\mathfrak{g}_{P})\xrightarrow{d^{1,\ast}_{(A,\phi)}}\Om^{1}(\mathfrak{g}_{P})
\times\Om^{1}(\mathfrak{g}_{P})\xrightarrow{d^{0,\ast}_{(A,\phi)}}\Om^{0}(\mathfrak{g}_{P})\ra 0.$$
There codifferentials are
\begin{equation}\nonumber
d^{1,\ast}_{(A,\phi)}(a',b')=(d_{A}^{\ast}b'-\ast[\phi,a'],d^{\ast}_{A}a'+\ast[\phi,b']),
\end{equation}
and
\begin{equation}\nonumber
d^{0,\ast}_{(A,\phi)}(a,b)=(-d^{\ast}_{A}a+\ast[\phi,\ast b]).
\end{equation}
\begin{prop}\label{26}
The map $KW(A,\phi)$ has an exact quadratic expansion given by
$$KW(A+a,\phi+b)=KW(A,\phi)+d^{1}_{A,\phi}(a,b)+\{(a,b),(a,b)\},$$
where $\{(a,b),(a,b)\}$ is the symmetric quadratic form given by
$$\{(a,b),(a,b)\}:=\big{(}[a,b]^{-},(a\wedge a+[b,b])^{+}\big{)}.$$
\end{prop}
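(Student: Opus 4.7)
The plan is to verify Proposition~\ref{26} by direct expansion of the two components of $KW(A+a,\phi+b)$. The crucial structural fact is that the curvature $F$ is quadratic in the connection and the covariant exterior derivative $d$ is linear in the connection, so after expansion the contributions terminate at order two in $(a,b)$ and the identity holds \emph{exactly}, not merely modulo higher-order terms.

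For the $\Om^{2,+}$-component, I would use the standard formula $F_{A+a}=F_{A}+d_{A}a+a\wedge a$ combined with the bilinear expansion $(\phi+b)\wedge(\phi+b)=\phi\wedge\phi+(\phi\wedge b+b\wedge\phi)+b\wedge b$. Subtracting, projecting onto self-dual $2$-forms, and sorting by total degree in $(a,b)$, the degree-zero part reproduces the first component of $KW(A,\phi)$, the degree-one part reproduces the second component of $d^{1}_{(A,\phi)}(a,b)$ (using the paper's convention for brackets on $\mathfrak{g}_{P}$-valued $1$-forms), and the residual degree-two part is the second component of $\{(a,b),(a,b)\}$.

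For the $\Om^{2,-}$-component, I would use the elementary identity $d_{A+a}\psi=d_{A}\psi+[a,\psi]$ for any $\mathfrak{g}_{P}$-valued form $\psi$, giving $d_{A+a}(\phi+b)=d_{A}\phi+d_{A}b+[a,\phi]+[a,b]$. Projecting onto anti-self-dual $2$-forms, the degree-zero part is $(d_{A}\phi)^{-}$, the degree-one part is $(d_{A}b+[a,\phi])^{-}$, matching the first component of $d^{1}_{(A,\phi)}$, and the degree-two part is $[a,b]^{-}$, i.e.\ the first component of $\{(a,b),(a,b)\}$.

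The proposition carries no analytic content; it is a polynomial identity in $(a,b)$. Consequently there is no serious obstacle, and the only point demanding care is the consistent accounting of sign and normalisation conventions for the bracket and wedge product of Lie-algebra-valued $1$-forms, so that the linear terms produced by the expansion literally match the operator $d^{1}_{(A,\phi)}$ defined earlier and the quadratic remainder literally matches $\{(a,b),(a,b)\}$ as stated.
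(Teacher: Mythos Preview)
Your proposal is correct: the paper states this proposition without proof, and the direct expansion of the two components of $KW(A+a,\phi+b)$ that you outline is precisely the intended (and essentially the only) verification. You have also correctly noticed and handled the fact that the paper lists the components of $d^{1}_{(A,\phi)}$ and of $\{(a,b),(a,b)\}$ in the order $(\Om^{2,-},\Om^{2,+})$, reversed relative to the order in which $KW$ itself is written; your remark about tracking sign and normalisation conventions for the bracket versus the wedge of $\mathfrak{g}_{P}$-valued $1$-forms is the only genuine bookkeeping point, and it is exactly the right caveat.
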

Given fixed $(A_{0},\phi_{0})$,\ we look for solutions to the inhomogeneous equation $KW(A_{0}+a,\phi_{0}+b)=\psi_{0}$.\ By Proposition \ref{26},\ this equation is equivalent to
\begin{equation}
d^{1}_{A_{0},\phi_{0}}+\{(a,b),(a,b)\}=\psi_{0}-KW(A_{0},\phi_{0}).
\end{equation}
To make this equation elliptic,\ it's nature to impose the gauge-fixing condition
$$d^{0,\ast}_{(A_{0},\phi_{0})}(a,b)=\zeta.$$
If we define
\begin{equation}\nonumber
\begin{split}
&\D_{(A_{0},\phi_{0})}:=d^{0,\ast}_{(A_{0},\phi_{0})}+d^{1}_{(A_{0},\phi_{0})},\\
&\psi=\psi_{0}-KW(A_{0},\phi_{0}).\\
\end{split}
\end{equation}
then the elliptic system can be rewritten as
\begin{equation}\label{27}
\D_{(A_{0},\phi_{0})}+\{(a,b),(a,b)\}=(\zeta,\psi).
\end{equation}
This is situation is consider in \cite{FL2} equation 3.2 in the context of $PU(2)$ monopoles.

\subsection{Regularity and elliptic estimates}

First we summarize the result of \cite{FL2},\ which apply verbatim to Kapustin-Witten equations upon replacing the $PU(2)$ spinor $\Phi$ by $\phi$,
\begin{prop}(\cite{FL2} Corollary 3.4)
Let $X$ be a closed,\ oriented,\ $4$-dimensional Riemannian manifold,\ let $P\rightarrow X$ be a principal bundle with compact structure group,\ and let $(A_{0},\phi_{0})$ be a $C^{\infty}$ configuration in $\mathcal{C}_{P}$.\ Then there exist a positive constant $\epsilon=\epsilon(A_{0},\phi_{0})$ such that if $(a,b)$ is an $L^{2}_{1}$ solution to (\ref{27}) over $X$,\ where $(\zeta,\psi)$ is in $L^{2}_{k}$ and $\|(a,b)\|_{L^{4}}<\epsilon$,\ and $k\geq0$ is an integer,\ then $(a,b)\in L^{2}_{k+1}$ and there is a polynomial $Q_{k}(x,y)$,\ with positive real coefficients,\ depending at most on $(A_{0},\phi_{0}),k$
such that $Q_{k}(0,0)=0$ and
\begin{equation}
\|(a,b)\|_{L^{2}_{k+1,A_{0}}(X)}\leq Q_{k}\big{(}\|(\zeta,\psi)\|_{L^{2}_{k,A_{0}}(X)},\|(a,b)\|_{L^{2}(X)}\big{)}.
\end{equation}
In particular,\ if $(\zeta,\psi)$ is in $C^{\infty}$ and if $(\zeta,\psi)=0$,\ then
$$\|(a,b)\|_{L^{2}_{k+1,A_{0}}(X)}\leq C\|(a,b)\|_{L^{2}(X)}.$$
\end{prop}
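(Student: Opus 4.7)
The plan is a standard elliptic bootstrap for the gauge-fixed linearization $\D_{(A_0,\phi_0)} = d^{0,\ast}_{(A_0,\phi_0)} + d^{1}_{(A_0,\phi_0)}$. Because $(A_0,\phi_0)$ is $C^\infty$, this is a first-order elliptic operator with smooth coefficients and hence satisfies the standard \emph{a priori} estimate
\[
\|(a,b)\|_{L^2_{k+1,A_0}(X)} \leq C_k\bigl(\|\D_{(A_0,\phi_0)}(a,b)\|_{L^2_{k,A_0}(X)} + \|(a,b)\|_{L^2(X)}\bigr),
\]
for every integer $k \geq 0$, with constant $C_k = C_k(A_0,\phi_0)$. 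Substituting (\ref{27}) to rewrite $\D_{(A_0,\phi_0)}(a,b) = (\zeta,\psi) - \{(a,b),(a,b)\}$ reduces the problem to controlling the algebraic quadratic term $\{(a,b),(a,b)\}$ in each $L^2_{k,A_0}(X)$ by a polynomial in $\|(\zeta,\psi)\|_{L^2_{k,A_0}}$ and $\|(a,b)\|_{L^2}$.

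The main obstacle is the base case $k=0$, because $L^2_1(X)$ is not a Banach algebra on a four-manifold, so $\|\{(a,b),(a,b)\}\|_{L^2}$ cannot be dominated by $\|(a,b)\|_{L^2_{1,A_0}}^{2}$ alone and then absorbed naively into the left side of the a priori estimate. The $L^4$-smallness hypothesis is designed precisely to bypass this: keeping one factor in $L^4$, using $\|(a,b)\|_{L^4} < \epsilon$, and embedding the other factor via Sobolev $L^2_{1,A_0}(X) \hookrightarrow L^4(X)$, one obtains
\[
\|\{(a,b),(a,b)\}\|_{L^2(X)} \leq c_0 \|(a,b)\|_{L^4(X)}^{2} \leq c_0\epsilon\, \|(a,b)\|_{L^4(X)} \leq c_1\epsilon\, \|(a,b)\|_{L^2_{1,A_0}(X)}.
\]
Choosing $\epsilon$ so small that $C_0 c_1 \epsilon \leq \tfrac12$ lets me absorb the quadratic term into the left side of the $k=0$ estimate and produces the linear bound $\|(a,b)\|_{L^2_{1,A_0}} \leq 2C_0(\|(\zeta,\psi)\|_{L^2} + \|(a,b)\|_{L^2})$, which is $Q_0$.

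The inductive step $k\to k+1$ then proceeds via standard Sobolev multiplication on a compact four-manifold: assuming the polynomial bound on $\|(a,b)\|_{L^2_{k+1,A_0}}$ from the previous level, I estimate $\|\{(a,b),(a,b)\}\|_{L^2_{k+1,A_0}}$ by products in which one factor carries the top derivatives and the other sits in an $L^p$-Sobolev norm controlled via Sobolev embedding---together with the Banach algebra property of $L^2_{j,A_0}$ that holds once $j$ exceeds half the dimension. Each such factor is already polynomially controlled by the inductive hypothesis, so feeding the resulting bound back into the a priori inequality at level $k+1$ yields $(a,b) \in L^2_{k+2,A_0}$ and the next polynomial $Q_{k+1}$. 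In the homogeneous case $(\zeta,\psi)=0$, the base case $Q_0$ is already linear in $\|(a,b)\|_{L^2}$; since each inductive step always contributes one $L^4$-factor bounded by $\epsilon$, linearity in $\|(a,b)\|_{L^2}$ propagates to every level, producing the stated estimate $\|(a,b)\|_{L^2_{k+1,A_0}} \leq C\|(a,b)\|_{L^2}$.
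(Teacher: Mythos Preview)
The paper does not give its own proof of this proposition; it is quoted from Feehan--Leness \cite{FL2}, Corollary~3.4, without argument. Your bootstrap is precisely the standard argument used there: the $L^2_k$ elliptic estimate for the first-order operator $\D_{(A_0,\phi_0)}$, absorption of the bilinear term via the hypothesis $\|(a,b)\|_{L^4}<\epsilon$ together with the Sobolev embedding $L^2_{1,A_0}\hookrightarrow L^4$ at the critical base step $k=0$, and then induction using Sobolev multiplication. Absorption via the same $L^4$-smallness is still needed at levels $k=1,2$ (applied to the top-order piece $\{\na_{A_0}^{k}(a,b),(a,b)\}$ of the Leibniz expansion), but once $(a,b)\in L^2_{3,A_0}\hookrightarrow L^\infty$ the remaining steps require no absorption, so a single $\epsilon$ independent of $k$ suffices. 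This is correct and matches the cited source.

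One small correction to your last paragraph: the claim that ``each inductive step always contributes one $L^4$-factor bounded by $\epsilon$'' is not accurate for the cross-terms $\{\na_{A_0}^{\,j}(a,b),\na_{A_0}^{\,k-j}(a,b)\}$ with $0<j<k$, where neither factor can be left undifferentiated in $L^4$; these pieces genuinely contribute powers $\|(a,b)\|_{L^2}^{m}$ with $m\geq 2$ to $Q_k$, so linearity does not propagate in the way you describe. The correct (and simpler) justification of the linear bound when $(\zeta,\psi)=0$ is that $\|(a,b)\|_{L^2}\leq(\mathrm{vol}\,X)^{1/4}\|(a,b)\|_{L^4}<(\mathrm{vol}\,X)^{1/4}\epsilon$ is itself bounded by a fixed constant, and since $Q_k(0,0)=0$ the polynomial $y\mapsto Q_k(0,y)$ has no constant term and is therefore $\leq C\,y$ on that bounded interval.
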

\begin{prop}\label{31}
Let $X$ be a closed,\ oriented,\ $4$-dimensional Riemannian manifold,\ let $P\rightarrow X$ be a principal bundle with compact structure group.\ Suppose $\Om\subset X$ is an open subset such that $P|_{\Om}$ is trivial,\ and $\Ga$ is a smooth flat connection.\ Then there exist a positive constant $\epsilon=\epsilon(\Om)$ with the following significance.\ Suppose that $(a,b)$ is an $L^{2}_{1}(\Om)$ solution to the elliptic system (\ref{27}) over $\Om$ with $(A_{0},B_{0})=(\Ga,0)$,\ where $(\zeta,\psi)$ is in $L^{2}_{k}(\Om)$,\ $k\geq1$ is an integer,\ and $\|(a,b)\|_{L^{4}}<\epsilon$.\ Let $\Om'\Subset\Om$ be a precompact open subset.\ Then $(a,b)$ is in $L^{2}_{k+1}(\Om')$ and there is a polynomial $Q_{k}(x,y)$,\ with positive real coefficients,\ depending at most on $k,\Om,\Om'$ such that $Q_{k}(0,0)=0$ and
\begin{equation}
\|(a,b)\|_{L^{2}_{k+1,\Ga}(\Om')}\leq Q_{k}\big{(}\|(\zeta,\psi)\|_{L^{2}_{k,A_{0}}(X)},\|(a,b)\|_{L^{2}(X)}\big{)}.
\end{equation}
If $(\zeta,\psi)$ is in $C^{\infty}(\Om)$  then $(\zeta,\psi)$ is in $C^{\infty}(\Om')$ and if $(\zeta,\psi)=0$,\ then
$$\|(a,b)\|_{L^{2}_{k+1,\Ga}(\Om')}\leq C\|(a,b)\|_{L^{2}(\Om)}.$$
\end{prop}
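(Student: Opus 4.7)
The plan is to imitate the proof of the global regularity statement (Corollary 3.4 of \cite{FL2}) quoted just above, replacing the global elliptic estimate for the closed manifold $X$ by an \emph{interior} elliptic estimate for $\D_{(\Ga,0)}$ on the open set $\Om$, where everything is trivialised. Concretely, since $P|_{\Om}$ is trivial and $\Ga$ is smooth and flat, after a smooth gauge transformation over $\Om$ we may assume $\Ga=d$ is the product connection. Under this identification the principal symbol of $\D_{(\Ga,0)}=d^{0,\ast}_{(\Ga,0)}+d^{1}_{(\Ga,0)}$ reduces to the constant-coefficient elliptic operator $d^{\ast}\oplus d^{+}\oplus d^{-}$ acting on pairs of $\mathfrak{g}$-valued $1$-forms, with no zeroth-order coupling between $a$ and $b$ (because $\phi_{0}=0$). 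Standard interior elliptic regularity then applies verbatim to $\D_{(\Ga,0)}$.

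The setup is the following. Fix a nested chain of precompact opens $\Om'=\Om_{k+1}\Subset\Om_{k}\Subset\cdots\Subset\Om_{0}=\Om$ and smooth cutoffs $\chi_{j}\in C^{\infty}_{c}(\Om_{j})$ with $\chi_{j}\equiv1$ on $\Om_{j+1}$. Rewriting (\ref{27}) in the form
\[
\D_{(\Ga,0)}(a,b)=(\zeta,\psi)-\{(a,b),(a,b)\},
\]
multiplying by $\chi_{j}$ produces a commutator $[\D_{(\Ga,0)},\chi_{j}](a,b)$, which is a zeroth-order operator in $(a,b)$ supported in $\Om_{j}\setminus\Om_{j+1}$. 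Applying the standard constant-coefficient interior estimate for $\D_{(\Ga,0)}$ gives, for each $j\geq0$,
\[
\|(a,b)\|_{L^{2}_{j+1,\Ga}(\Om_{j+1})}\leq C\bigl(\|(\zeta,\psi)\|_{L^{2}_{j,\Ga}(\Om_{j})}+\|\{(a,b),(a,b)\}\|_{L^{2}_{j,\Ga}(\Om_{j})}+\|(a,b)\|_{L^{2}_{j,\Ga}(\Om_{j})}\bigr).
\]

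The bootstrap is then driven by Sobolev multiplication on the $4$-manifold. For the base step $j=0$, the embedding $L^{2}_{1}\hookrightarrow L^{4}$ yields
\[
\|\{(a,b),(a,b)\}\|_{L^{2}(\Om)}\leq C\|(a,b)\|_{L^{4}(\Om)}^{2}\leq C\epsilon\,\|(a,b)\|_{L^{2}_{1,\Ga}(\Om)},
\]
and the hypothesis $\|(a,b)\|_{L^{4}}<\epsilon$ is exactly what allows the quadratic term to be absorbed into the linear side, controlling $\|(a,b)\|_{L^{2}_{2,\Ga}(\Om_{1})}$ by a polynomial in $\|(\zeta,\psi)\|_{L^{2}_{1,\Ga}(\Om)}$ and $\|(a,b)\|_{L^{2}(\Om)}$. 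For $j\geq1$, once $(a,b)\in L^{2}_{j+1,\Ga}(\Om_{j+1})$ with $j+1\geq2$, the Sobolev algebra property on a $4$-manifold gives a polynomial bound
\[
\|\{(a,b),(a,b)\}\|_{L^{2}_{j+1,\Ga}(\Om_{j+1})}\leq P_{j+1}\bigl(\|(a,b)\|_{L^{2}_{j+1,\Ga}(\Om_{j+1})}\bigr),
\]
with $P_{j+1}(0)=0$, so iterating from $j=0$ up to $j=k$ assembles the polynomial $Q_{k}$ and upgrades $(a,b)$ to $L^{2}_{k+1}(\Om')$. Specialising to $(\zeta,\psi)=0$ drops the $\|(\zeta,\psi)\|$-dependence, and letting $k\to\infty$ after shrinking $\Om'$ slightly inside any fixed precompact set yields the $C^{\infty}$ conclusion.

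The main technical obstacle is the base step: the quadratic term lies only in $L^{2}$ a priori, and absorbing the contribution $C\epsilon\|(a,b)\|_{L^{2}_{1,\Ga}}$ into the linear estimate requires $\epsilon$ to be small relative to the interior elliptic constant for $\D_{(\Ga,0)}$ on $\Om$, which forces $\epsilon$ to depend on $\Om$ (as in the statement). The remaining steps are the standard Schauder-type bootstrap: each application of interior regularity gains one derivative on $(a,b)$ on a slightly smaller open set, at the cost of a polynomially growing constant, and the commutator terms $[\D_{(\Ga,0)},\chi_{j}](a,b)$ are automatically one order less than the norm being estimated, so they cause no loss in the induction.
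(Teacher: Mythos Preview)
The paper itself does not supply a proof of this proposition: it is stated immediately after the global regularity result quoted from \cite{FL2} (Corollary~3.4 there) and is understood as the evident interior analogue, obtained by running the same bootstrap argument with local elliptic estimates in place of global ones. Your proposal carries out precisely that, so it is correct and in the same spirit as what the paper implicitly intends.

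One small point worth tightening in your base step: as written, the absorption looks like it needs $\|(a,b)\|_{L^{2}_{1,\Ga}(\Om)}$ on the right, which lives on a larger set than the $L^{2}_{1}$ norm you are estimating on the left. The clean fix is to exploit the bilinearity of $\{\cdot,\cdot\}$ and pull the cutoff inside one factor, i.e.\ write $\chi_{0}\{(a,b),(a,b)\}=\{\chi_{0}(a,b),(a,b)\}$, so that
\[
\|\chi_{0}\{(a,b),(a,b)\}\|_{L^{2}}\leq C\|(a,b)\|_{L^{4}(\Om)}\,\|\chi_{0}(a,b)\|_{L^{4}}\leq C\epsilon\,\|\chi_{0}(a,b)\|_{L^{2}_{1,\Ga}},
\]
and now the same cutoff norm appears on both sides and can be absorbed for $\epsilon$ small depending on the interior elliptic constant for $\D_{(\Ga,0)}$ on $\Om$. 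With that adjustment the argument is complete and matches exactly the proof strategy of \cite{FL2}, localised.
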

We assume that $\int_{B_{r}(x)}|F_{A}|^{2}\leq\epsilon$ any $x\in X$ and a $0<r\leq\delta$.\ We then use the following version of the Uhlenbeck theorem (the original appeared in \cite{U}) stated in Remark 6.2a of \cite{KW} and proved in Pages 105-106 of \cite{KW} by Wehrheim.
\begin{thm}\label{23}
Let $X$ be a compact four-dimensional manifold,\ let $P\rightarrow X$ be a principal bundle with compact structure group,\ and let $2\leq p<4$.\ Let $B_{r}(x)$ denote the geodesic ball of radius $r$ centered at $x$.\ Then there exists constant $C,\epsilon>0$ such that the following holds:

For every point $x\in X$,\ there exists a positive radius $r_{x}$ such for all $r\in(0,r_{x}]$,\ all smooth flat connections $\Ga\in\Om^{1}({B_{r}(x)},\mathfrak{g}_{P})$,\ and all $L^{p}_{1}$ connection $A\in\Om^{1}({B_{r}(x)},\mathfrak{g}_{P})$ with $\|F_{A}\|_{L^{p}(B_{r}(x))}\leq\epsilon$,\ there exists a gauge transformation $g\in\G^{2,p}(B_{r}(x))$ such that
\begin{equation}\nonumber
\begin{split}
&(1)\ d^{\ast}_{\Ga}\big{(}g^{\ast}(A)-\Ga\big{)}=0\ on\ B_{r}(x)\ and\ \frac{\pa}{\pa r}\lrcorner(g^{\ast}A-\Ga)=0\ on\ \pa B_{r}(x),\ and\\
&(2)\|g^{\ast}A-\Ga\|_{L^{2}_{1}(B_{r}(x))}\leq C\|F_{A}\|_{L^{2}(B_{r}(x))}.\\
\end{split}
\end{equation}
\end{thm}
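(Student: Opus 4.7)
The plan is to reduce this geodesic-ball formulation to the standard Coulomb-gauge theorem of Uhlenbeck on a Euclidean ball in $\mathbb{R}^{4}$, then accommodate an arbitrary smooth flat reference connection $\Ga$. Three ingredients drive the argument: (i) on sufficiently small geodesic balls the Riemannian metric $g$ is quantitatively close to the Euclidean one in normal coordinates; (ii) a flat connection on the contractible set $B_{r}(x)$ is smoothly gauge-equivalent to the product connection $d$; (iii) the Euclidean version of the theorem, in the refined form including the Neumann boundary condition $\tfrac{\pa}{\pa r}\lrcorner A=0$ on $\pa B_{r}$, is available in \cite{U} and \cite{KW}.

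First I would fix $r_{x}$ smaller than the injectivity radius at $x$ and small enough that geodesic normal coordinates produce a uniformly quasi-isometric identification of $B_{r_{x}}(x)$ with a Euclidean ball $B_{r_{x}}(0) \subset \mathbb{R}^{4}$. Compactness of $X$ yields a uniform lower bound on $r_{x}$ and uniform constants in all Sobolev, Hodge and elliptic estimates on such balls. Since $B_{r}(x)$ is contractible and $P|_{B_{r}(x)}$ is trivial, one can write $\Ga = g_{0}^{\ast}(d)$ for some smooth gauge transformation $g_{0}$; after applying $g_{0}^{-1}$ one may therefore assume $\Ga=d$ is the product connection, and the $L^{p}$-curvature bound on $A$ is preserved since curvature is gauge-invariant.

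Next I would apply the Euclidean Uhlenbeck theorem to $A' := (g_{0}^{-1})^{\ast}(A)$ on $B_{r}(0)$: for $\ep$ small enough there exists $g_{1} \in \G^{2,p}(B_{r}(x))$ such that $A'' := g_{1}^{\ast}(A')$ satisfies $d^{\ast}A''=0$ on $B_{r}$, $\tfrac{\pa}{\pa r}\lrcorner A'' =0$ on $\pa B_{r}$, and $\|A''\|_{L^{2}_{1}} \leq C\|F_{A''}\|_{L^{2}} = C\|F_{A}\|_{L^{2}}$. Composing $g=g_{0}g_{1}$, the difference $g^{\ast}(A)-\Ga$ coincides in the chosen trivialization with $A''$, so conditions (1) and (2) of the theorem follow immediately. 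The quasi-isometric comparison of $g$ with the Euclidean metric introduces only bounded distortion factors in the norms, which are absorbed into the final universal constant.

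The main obstacle lies in the underlying Euclidean Coulomb-gauge construction itself, not in the reduction. That construction proceeds by the continuity method along the path $\{tA'\}_{t\in[0,1]}$: openness rests on the implicit function theorem applied to the map $g \mapsto \big(d^{\ast}(g^{\ast}A),\ \tfrac{\pa}{\pa r}\lrcorner(g^{\ast}A)|_{\pa B_{r}}\big)$, whose linearization at the identity is the Hodge Laplacian with Neumann data, invertible modulo the constant gauges. Closedness requires a uniform $L^{2}_{1}$ a priori estimate on the Coulomb representative, which comes from the Gaffney-type inequality $\|a\|_{L^{2}_{1}}^{2} \leq C(\|d^{\ast}a\|_{L^{2}}^{2}+\|da\|_{L^{2}}^{2})$ for one-forms obeying the Neumann boundary condition, the identity $da = F_{A}+\tfrac{1}{2}[a,a]$, and an absorption step using $\|F_{A}\|_{L^{p}} \leq \ep$ together with the Sobolev embedding $L^{2}_{1} \hookrightarrow L^{4}$ in dimension four. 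Establishing the uniform continuity of the linearized inverse and closing this absorption argument is where the bulk of the work sits, and is what Wehrheim carries out in the pages of \cite{KW} that are cited.
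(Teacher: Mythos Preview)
The paper does not give its own proof of this theorem: immediately before the statement it says the result is ``stated in Remark 6.2a of \cite{KW} and proved in Pages 105--106 of \cite{KW} by Wehrheim,'' and then simply uses it. So there is nothing in the paper to compare your argument against beyond that citation.

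Your sketch is the standard one and is essentially what Wehrheim does: pass to normal coordinates on a small ball, use contractibility to gauge the flat reference $\Ga$ to the product connection, invoke the Euclidean Uhlenbeck Coulomb-gauge theorem with Neumann boundary condition, and absorb the metric distortion into the constants. Your outline of the underlying continuity-method proof (implicit function theorem for openness, Gaffney-type a priori estimate plus Sobolev absorption for closedness) is also correct. One small bookkeeping point worth making explicit: when you write $g=g_{0}g_{1}$ you should verify the order of composition so that $g^{\ast}(A)-\Ga$ really is the $g_{0}$-pushforward of $A''$, and hence that $d^{\ast}_{\Ga}$ acting on it corresponds to $d^{\ast}$ acting on $A''$; this is routine but easy to get backwards. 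Otherwise your reduction is sound and matches the cited source.
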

At this point,\ we must deviate slightly from \cite{FL2},\ since we have no estimate of the form $|\phi|^{4}\leq C|\phi\wedge\phi|^{2}$ (c.f. \cite{FL2} Lemma 2.26),\ so $F^{+}_{A}$ does not bound $\phi$.\ Instead,\ we get the following analogue of \cite{FL2} Corollary 3.15 by combining Proposition \ref{31} and Theorem \ref{23}.
\begin{prop}
Let $B$ be the open unit ball with center at the origin,\ let $U\Subset B$ be an open subset,\ let $P\rightarrow B$ be a principal bundle with compact structure group,\ and let $\Ga$ be a smooth flat connection on $P$.\ Then there is a positive constant $\epsilon$ such that for all integers $k\geq1$ there is a constant $C(k,U)$ such that for all $L^{2}_{1}$ solution $(A,\phi)$ satisfying
$$\|F_{A}\|^{2}_{L^{2}(B)}+\|\phi\|^{4}_{L^{4}(B)}\leq\epsilon,$$
there is an $L^{2}_{2}$ gauge transformation $g$ such that $g^{\ast}(A,\phi)$ is in $C^{\infty}(B)$ with
$$d^{\ast}_{\Ga}\big{(}g^{\ast}(A)-\Ga\big{)}=0$$
over $B$ and
$$\|g^{\ast}(A,\phi)\|_{L^{2}_{k,\Ga}(U)}\leq C(\|F_{A}\|_{L^{2}(B)}+\|\phi\|_{L^{2}(B)}).$$
\end{prop}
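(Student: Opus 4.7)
The plan is to combine the Uhlenbeck gauge fixing of Theorem \ref{23} with the elliptic bootstrap in Coulomb gauge supplied by Proposition \ref{31}. The hypothesis $\|F_{A}\|^{2}_{L^{2}(B)}+\|\phi\|^{4}_{L^{4}(B)}\leq\epsilon$ is tailored to feed both ingredients simultaneously: the first summand provides the curvature smallness needed for Uhlenbeck, while the second summand, combined with the $L^{2}_{1}\hookrightarrow L^{4}$ control produced by Uhlenbeck, provides the $L^{4}$-smallness hypothesis needed to invoke Proposition \ref{31}.

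Concretely, I would first apply Theorem \ref{23} with $p=2$: for $\epsilon$ small enough there is an $L^{2}_{2}$ gauge transformation $g$ with $d^{\ast}_{\Ga}(g^{\ast}A-\Ga)=0$ on $B$ and $\|g^{\ast}A-\Ga\|_{L^{2}_{1}(B)}\leq C\|F_{A}\|_{L^{2}(B)}$. Setting $a:=g^{\ast}A-\Ga$ and $b:=g^{\ast}\phi$, Sobolev embedding in dimension four gives $\|a\|_{L^{4}(B)}\leq C\sqrt{\epsilon}$, while gauge invariance yields $\|b\|_{L^{4}(B)}=\|\phi\|_{L^{4}(B)}\leq\epsilon^{1/4}$, so $\|(a,b)\|_{L^{4}(B)}$ can be driven below any preassigned threshold by shrinking $\epsilon$. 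Since $(g^{\ast}A,g^{\ast}\phi)$ still solves $KW=0$ and $KW(\Ga,0)=0$ because $\Ga$ is flat, the quadratic expansion of Proposition \ref{26} together with the Coulomb condition shows that $(a,b)$ solves the elliptic system (\ref{27}) based at $(A_{0},\phi_{0})=(\Ga,0)$ with vanishing source $(\zeta,\psi)=(0,0)$.

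With this setup in hand, Proposition \ref{31} applies and, iterated on a nested chain $U\Subset U_{1}\Subset U_{2}\Subset\cdots\Subset B$, produces $(a,b)\in C^{\infty}(U)$ together with $\|(a,b)\|_{L^{2}_{k+1,\Ga}(U)}\leq C_{k}\|(a,b)\|_{L^{2}(B)}$ for every $k\geq 1$; the smoothness of $g$ itself then follows from the standard bootstrap applied to the identity relating $g$, $A$ and $g^{\ast}A$. Bounding the right-hand side by $\|a\|_{L^{2}(B)}+\|b\|_{L^{2}(B)}\leq C\bigl(\|F_{A}\|_{L^{2}(B)}+\|\phi\|_{L^{2}(B)}\bigr)$ delivers the asserted estimate. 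I expect the main obstacle to be the compound bookkeeping: one must fix a single $\epsilon$ so that the Uhlenbeck threshold, the Sobolev embedding constant and the constants appearing in Proposition \ref{31} are simultaneously compatible, which forces the auxiliary nested sequence of subsets and is reflected in the dependence of $C(k,U)$ on both $k$ and $U$.
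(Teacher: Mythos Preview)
Your proposal is correct and follows essentially the same approach as the paper: apply Uhlenbeck's gauge fixing (Theorem \ref{23}) to obtain the Coulomb condition and $L^{2}_{1}$ control of $a=g^{\ast}A-\Ga$, use Sobolev embedding together with the hypothesis on $\|\phi\|_{L^{4}}$ to achieve the $L^{4}$-smallness of $(a,b)$, and then invoke Proposition \ref{31} with $(A_{0},\phi_{0})=(\Ga,0)$ and vanishing right-hand side. The paper's own proof is terser and omits the verification that $(\zeta,\psi)=(0,0)$ and the nested-domain bookkeeping you spell out, but the strategy is identical.
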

\begin{proof}
By choosing $\epsilon$ as in Theorem \ref{23},\ we can find $g$ such that
$$d^{\ast}_{\Ga}\big{(}g^{\ast}(A)-\Ga\big{)}=0$$
and
$$\|g^{\ast}(A)-\Ga\|_{L^{2}_{1}(B)}\leq C\|F_{A}\|_{L^{2}(B)}.$$
By the Sobolev embedding theorem,
$$\|g^{\ast}A-\Ga\|_{L^{4}(B)}\leq C\|F_{A}\|_{L^{2}(B)}.$$
Upon taking $(a,b)=(g^{\ast}A-\Ga,\phi)$,\ we are in the situation of Proposition \ref{31}.\ Thus we get the desired estimate.
\end{proof}
We generalize this estimate for geodesic ball:
\begin{thm}\label{53}
Let $X$ be an oriented Riemannian four-manifold,\ and let $P\rightarrow X$ be a principal bundle with compact structure group.\ Let $B_{r}(x)$ denote the geodesic ball of radius $r$ centered at $x$,\ and fix any $\a\in(0,1]$.\ For all $k\geq 1$ there exists constants $C(\a,k,r),\epsilon$ such that the following holds:

For all point $x\in X$,\ there exists a positive radius $r_{x}$ such that for all $r\in(0,r_{x}]$,\ all smooth flat connection $\Ga\in\mathcal{A}_{P}(B_{r}(x))$,\ and all $L^{2}_{1}$ solution $(A,\phi)$ with
$$\|F_{A}\|^{2}_{L^{2}(B_{r}(x))}+\|\phi\|^{4}_{L^{4}(B_{r}(x))}\leq\epsilon,$$
there is an $L^{2}_{2}(B_{r}(x))$ gauge transformation $g$ such that $g^{\ast}(A,\phi)$ is in $C^{\infty}(B_{r}(x))$ with $$d^{\ast}_{\Ga}\big{(}g^{\ast}(A)-\Ga\big{)}=0$$
over $B_{r}(x)$ and
$$\|g^{\ast}(A,\phi)\|_{L^{2}_{k,\Ga}(B_{\a r}(x))}\leq C(\|F_{A}\|_{L^{2}(B_{r}(x))}+\|\phi\|_{L^{2}(B_{r}(x))}).$$
\end{thm}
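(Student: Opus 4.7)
The strategy is to reduce the geodesic-ball estimate to the unit-ball statement of the preceding proposition by a standard rescaling argument. The key observation is that in dimension four the Kapustin--Witten equations are invariant under constant rescalings of the metric, and the critical quantities $\|F_A\|_{L^2}$ and $\|\phi\|_{L^4}$ are conformally invariant. I would first choose $r_x>0$ small enough that for every $r\leq r_x$ the exponential map $\exp_x:B_r(0)\subset T_xX\to B_r(x)$ is a diffeomorphism onto a normal neighborhood over which $P$ is trivialized, and the pulled-back metric $\exp_x^{\ast}g$ is uniformly $C^k$-close to the Euclidean metric on $B_{r_x}(0)$.

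Pulling back by the dilation $\Phi_r(y):=ry$ and rescaling the metric as $g_r:=r^{-2}\Phi_r^{\ast}\exp_x^{\ast}g$ transfers the problem to $(B_1(0),g_r)$, with $g_r\to g_{\text{Eucl}}$ in any $C^k$ norm as $r\to 0$. The pair $(A',\phi'):=(\exp_x\circ\Phi_r)^{\ast}(A,\phi)$ still solves the Kapustin--Witten equations on $B_1(0)$ with respect to $g_r$, by scale invariance, and the smallness hypothesis
\[
\|F_{A'}\|_{L^2(B_1;g_r)}^2+\|\phi'\|_{L^4(B_1;g_r)}^4=\|F_A\|_{L^2(B_r(x);g)}^2+\|\phi\|_{L^4(B_r(x);g)}^4\leq\epsilon
\]
is preserved by conformal invariance. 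I would then apply the preceding proposition: its proof, which rests on Proposition \ref{31} and Theorem \ref{23}, goes through verbatim for any metric sufficiently close to Euclidean in $C^k$, and hence uniformly for every $g_r$ with $r\leq r_x$. This produces an $L^2_2$ gauge transformation $h$ on $B_1$ satisfying $d^{\ast}_{\Gamma'}(h^{\ast}A'-\Gamma')=0$, where $\Gamma':=(\exp_x\circ\Phi_r)^{\ast}\Gamma$, the smoothness of $h^{\ast}(A',\phi')$, and the interior estimate
\[
\|h^{\ast}(A',\phi')\|_{L^2_{k,\Gamma'}(B_\alpha;g_r)}\leq C(\alpha,k)\big{(}\|F_{A'}\|_{L^2(B_1;g_r)}+\|\phi'\|_{L^2(B_1;g_r)}\big{)}.
\]

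Finally, I would pull $h$ back to $B_r(x)$ via $(\exp_x\circ\Phi_r)^{-1}$ to obtain the desired gauge transformation $g$. In dimension four the scaling identities give $\|F_{A'}\|_{L^2(B_1;g_r)}=\|F_A\|_{L^2(B_r(x);g)}$ and $\|\phi'\|_{L^2(B_1;g_r)}=r^{-1}\|\phi\|_{L^2(B_r(x);g)}$; the intrinsic $L^2_{k,\Gamma}$ norms on the two sides differ by explicit powers of $r$, and all such factors are absorbed into the final constant $C(\alpha,k,r)$ in the statement. The principal technical obstacle is verifying that the constants appearing in Theorem \ref{23} and Proposition \ref{31} can be chosen uniformly over the family $\{g_r\}_{r\leq r_x}$; this reduces to the standard fact that Uhlenbeck's local Coulomb gauge theorem and the interior elliptic regularity estimate for $\D_{(\Gamma,0)}$ are stable under small $C^k$ perturbations of the underlying metric.
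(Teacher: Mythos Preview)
The paper states this theorem without proof, introducing it only with the sentence ``We generalize this estimate for geodesic ball.'' Your rescaling argument is the standard way to carry out that generalization and is correct; it is precisely the argument the paper is implicitly invoking, and your identification of the key technical point---uniform control of the Uhlenbeck gauge-fixing and elliptic constants over the family $\{g_r\}_{r\le r_x}$---is exactly right.
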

We will show that all $L^{2}_{1}$ solutions to Kapustin-Witten equations are $L^{2}_{2}$-gauge equivalent to a smooth solution.\ The way which is similar to Mares \cite{BM} dealt with Vafa-Witten equations.
\begin{thm}\label{42}
Let $X$ be a closed smooth Riemannian four-manifold,\ $P\rightarrow X$ is a smooth principal $G$-bundle with $G$ compact and connected,\ $(A,\phi)$ is an $L^{2}_{1}$ configuration,\ and $KW(A,\phi)=0$.\ Then $(A,\phi)$ is $L^{2}_{2}$-gauge equivalent to a smooth configuration.
\end{thm}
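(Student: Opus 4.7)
The plan is to reduce the global regularity statement to the local Coulomb-gauge regularity of Theorem \ref{53} and then patch the local gauges into one global $L^{2}_{2}$ gauge transformation.

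First, since $(A,\phi)\in L^{2}_{1}$, the Sobolev embedding $L^{2}_{1}\hookrightarrow L^{4}$ in dimension four gives $\phi\in L^{4}(X)$, so both $\int_{X}|F_{A}|^{2}$ and $\int_{X}|\phi|^{4}$ are finite. By absolute continuity of these integrals, I would choose a finite cover of $X$ by geodesic balls $\{B_{r_{i}}(x_{i})\}_{i=1}^{N}$ satisfying
$$\|F_{A}\|^{2}_{L^{2}(B_{r_{i}}(x_{i}))}+\|\phi\|^{4}_{L^{4}(B_{r_{i}}(x_{i}))}\leq\epsilon,$$
with $\epsilon$ the constant of Theorem \ref{53}, shrinking each $r_{i}$ if necessary so that $P$ is trivial over $B_{r_{i}}(x_{i})$ with a product flat reference connection $\Ga_{i}$. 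Theorem \ref{53} then yields $g_{i}\in L^{2}_{2}(B_{r_{i}}(x_{i}))$ such that $g_{i}^{\ast}(A,\phi)$ is smooth on the shrunken ball $U_{i}:=B_{\a r_{i}}(x_{i})$; the constant $\a\in(0,1)$ may be chosen close enough to one that $\{U_{i}\}_{i=1}^{N}$ still covers $X$.

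The remaining task is to assemble the local gauge transformations $\{g_{i}\}$ into a single global $g\in L^{2}_{2}(X)$. On each overlap $U_{i}\cap U_{j}$ the transition $h_{ij}:=g_{i}g_{j}^{-1}$ intertwines the two smooth connections $g_{i}^{\ast}A$ and $g_{j}^{\ast}A$, so it satisfies a first-order elliptic equation with smooth coefficients; since $L^{2}_{2}\hookrightarrow C^{0}$ in dimension four, standard elliptic bootstrapping gives $h_{ij}\in C^{\infty}(U_{i}\cap U_{j})$. One can then add the balls $U_{k}$ one at a time: starting from $g:=g_{1}$ on $U_{1}$ and having produced $g$ on $U_{1}\cup\cdots\cup U_{k}$, modify $g_{k+1}$ on $U_{k+1}$ by a smooth gauge transformation obtained by extending the (smooth) transition across $\pa U_{k+1}$ via a cutoff, so that the new definition agrees with $g$ on $U_{1}\cup\cdots\cup U_{k}$ and remains in $L^{2}_{2}$.

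The main obstacle is this patching step: the gauge group is nonabelian, so no partition-of-unity argument delivers a global gauge from local ones automatically, and one must preserve both the $L^{2}_{2}$-regularity of $g$ and the smoothness of $g^{\ast}(A,\phi)$ through every step of the induction. Both requirements hinge on the transitions $h_{ij}$ being smooth, which in turn relies on the four-dimensional embedding $L^{2}_{2}\hookrightarrow C^{0}$ to launch the bootstrap. This argument is identical to the one carried out for the Vafa-Witten equations by Mares \cite{BM}, since the only inputs it uses are Theorem \ref{53} and Proposition \ref{31}, both established here in exactly the form needed.
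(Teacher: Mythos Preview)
Your proposal is correct and follows the same strategy as the paper: invoke the local Coulomb-gauge regularity of Theorem \ref{53} on small balls, observe that the resulting transitions are smooth because they intertwine smooth connections, and defer the global patching to Mares \cite{BM}. The only difference is in how the patching step is packaged. Rather than inductively extending $g$ ball by ball with cutoffs (which, as you note, is delicate for nonabelian $G$---a $G$-valued map cannot literally be cut off), the paper observes that the smooth transitions $h_{ij}=g_{i}g_{j}^{-1}$ define a new smooth principal bundle $P'$, that the local gauges $g_{i}$ assemble into an $L^{2}_{2}$ bundle isomorphism $h_{1}:P\to P'$, and that by \cite{BM} Theorem 3.3.10 there is also a \emph{smooth} isomorphism $h_{2}:P\to P'$; then $g:=h_{2}^{-1}h_{1}$ is the desired global $L^{2}_{2}$ gauge transformation. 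This reformulation cleanly separates the analytic input (local smoothness) from the topological input (any two smooth bundles that are $L^{2}_{2}$-isomorphic, hence $C^{0}$-isomorphic, are smoothly isomorphic), and avoids having to make your inductive cutoff argument precise.
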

\begin{proof}
By gauge-fixing on small ball $B_{r}(x)$ in which the local regularity theorem applies,\ we get $L^{2}_{2}$ trivializations $h_{1,x}$ of $P$ over $B_{r}(x)$ such that $h_{1,x}(A,\phi)$ is smooth.\ Since the transition functions $h_{1,x'}h^{-1}_{1,x}$ intertwine smooth connections,\ they defines a smooth principal $G$-bundle $P'$.\ The trivializations $h_{1,x}$ patch together to define an $L^{2}_{2}$ isomorphism $h_{1}:P\rightarrow P'$.\ The $h_{1,x}(A,\phi)$ determine a smooth configuration $(A',\phi')$ in $P'$ such that $h(A,\phi)=(A',\phi')$.

In order to prove that $(A,\phi)$ is $L^{2}_{2}$-gauge equivalent to a smooth connection,\ it suffices to show that there exists a smooth isomorphism $h_{2}:P\rightarrow P'$,\ for then $g:=h^{-1}_{2}h_{1}\in\G_{P}^{2,2}$ is the desired gauge transformation.\ The existence of $h_{2}$ is a consequence of \cite{BM} Theorem 3.3.10.
\end{proof}

\section{Gap phenomenon for extra fields}

\subsection{Uniform positive lower bounds for the least eigenvalue of $d^{+}_{A}d^{+,\ast}_{A}$}

\begin{defi}\label{111}(\cite{T4} Definition 3.1)
Let $X$ be a compact $4$-dimensional Riemannain manifold and $P\rightarrow X$ be a principal $G$-bundle with $G$ being a compact Lie group.\ Let $A$ be a connection of Sobolev class $L^{2}_{1}$ on $P$.\ The least eigenvalue of $d^{+}_{A}d^{+,\ast}_{A}$ on $L^{2}(X;\Om^{+}(\mathfrak{g}_{P}))$ is
\begin{equation}\label{12}
\mu(A):=\inf_{v\in\Om^{+}(\mathfrak{g}_{P})\backslash\{0\}}\frac{\|d^{+,\ast}_{A}v\|^{2}}{\|v\|^{2}}.
\end{equation}
\end{defi}
For a Riemannian metric $g$ on a 4-manifold,\ $X$,\ let $R_{g}(x)$ denotes its scalar curvature at a point $x\in X$ and let $\W^{\pm}_{g}\in End(\Om_{x}^{\pm})$ denote its self-dual and anti-self-dual Weyl curvature tensors at $x$,\ where $\Om^{2}_{x}=\Om^{+}_{x}\oplus\Om^{-}_{x}$.\ Define
$$\w_{g}^{\pm}:=Largest\ eigenvalue\ of\ \W^{\pm}_{g}(x),\ \forall x\in X.$$
We recall the following Weitenb\"{o}ck formula,
\begin{equation}\label{28}
2d^{+}_{A}d^{+,\ast}_{A}v=\na_{A}^{\ast}\na_{A}v+\big{(}\frac{1}{3}R_{g}-2\w_{g}^{+}\big{)}v+\{F^{+}_{A},v\},\ \forall v\in\Om^{+}(\mathfrak{g}_{P}).
\end{equation}
We called a Riemannian metric,\ $g$,\ on $X$ $positive$ if $\frac{1}{3}R_{g}-2\w^{+}_{g}>0$,\ that is,\ the operator $R_{g}/3-2\W^{+}_{g}\in End(\Om^{+})$ is pointwise positive definite.\ Then the Weitzenb\"{o}ck formula (\ref{28}) ensures that the least eigenvalue function,
\begin{equation}\label{44}
\mu[\cdot]:\ M_{ASD}(P,g)\rightarrow[0,\infty),
\end{equation}
defined by $\mu(A)$ in (\ref{12}),\ admits a uniform positive lower bound,\ $\mu_{0}$,
$$\mu(A)\geq\mu_{0},\ \forall[A]\in\ M_{ASD}(P,g).$$

The existence of a uniform positive lower bound for the least eigenvalue function (\ref{44}) is more subtle and relies on the generic metric theorem Freed and Uhlenbeck (\cite{FU},\ Pages 69-73),\ together with certain extensions due to Donaldson and Kronheimer (\cite{DK},\ Sections 4.3.3.).\ Under suitable hypotheses on $P$ and a generic Riemannian metric,\ $g$,\ on $X$,\ their results collectively ensure that $\mu(A)>0$ for all $[A]$ in both $M_{ASD}(P,g)$ and every $M_{ASD}(P_{l},g)$,\ appearing in its $Uhlenbeck\ compactification$ (see \cite{DK} Definition 4.4.1,\ Condition 4.4.2,\ and Theorem 4.4.3),
\begin{equation}\label{45}
\bar{M}_{ASD}(P,g)\subset\bigcup_{i=1}^{L}\big{(}M_{ASD}(P_{l},g)\times Sym^{l}(X)\big{)},
\end{equation}
where $L=L\big{(}k(P)\big{)}\geq0$ is a sufficiently large integer.
\begin{thm}\label{50}(\cite{PF1} Theorem 34.23,\ \cite{PF2} Theorem 3.5)
Let $G$ be a compact,\ simple Lie group and $P$ a principal $G$-bundle over a closed,\ connected,\ four-dimensional,\ oriented,\ smooth manifold,\ $X$.\ Then there is an open dense subset,\ $\mathscr{C}(X,p_{1}(P))$,\ of Banach space,\ $\mathscr{C}(X)$,\ of conformal equivalence classes,\ $[g]$,\ of $C^{r}$ Riemannian metrics on $X$ (for some $r\geq3)$ with the following significance.\ Assume that $[g]\in\mathscr{C}(X)$ and at least one of the following holds:

(1)$b^{+}(X)=0$,\ the group $\pi_{1}(X)$ has no non-trivial representations in $G$,\ and $G=SU(2)$ or $SO(3)$;\ or

(2)$b^{+}(X)>0$,\ the group $\pi_{1}(X)$ has no non-trivial representations in $G$,\ and $G=SO(3)$ and the second Stiefel-Whitney class,\ $w_{2}(P)\in H^{2}(X;\mathbb{Z}/2\mathbb{Z})$,\ is non-trivial.

Then every point $[A]\in M(P,g)$ has the property that $\mu(A)>0$.
\end{thm}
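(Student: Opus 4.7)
The plan is to reinterpret $\mu(A)>0$ as a deformation-cohomology vanishing statement, to show that the hypotheses in (1) or (2) force every ASD connection on $P$ to be irreducible, and then to invoke a Sard-Smale transversality argument of Freed-Uhlenbeck type, together with the Donaldson-Kronheimer refinement that handles the strata of the Uhlenbeck compactification (\ref{45}).

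First I would note that $\mu(A)>0$ is equivalent to injectivity of $d_A^{+,\ast}$ on $\Om^{+}(\mathfrak{g}_P)$, i.e.\ to the vanishing of the second cohomology $H^2_A$ of the Atiyah-Hitchin-Singer deformation complex for the ASD equation. For an irreducible ASD $A$ this is precisely the transversality criterion that makes $M_{ASD}(P,g)$ a smooth manifold of the expected dimension at $[A]$. Thus the theorem reduces to producing an open dense set of conformal classes $[g]$ on which every ASD connection is irreducible and the deformation cokernel vanishes.

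Next I would rule out reducible ASD connections under each set of hypotheses. Under (1), the assumption that $\pi_1(X)$ admits no non-trivial representation in $G$ eliminates flat reductions, while $b^+(X)=0$ combined with the classical decomposition of $SU(2)$ or $SO(3)$ ASD reductions into line-bundle summands (see \cite{DK}, Ch.\,4) excludes the remaining possibilities. Under (2), non-triviality of $w_2(P)\in H^2(X;\mathbb{Z}/2\mathbb{Z})$ topologically obstructs any $U(1)$-reduction of $P$, so again no ASD connection on $P$ can be reducible.

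Third I would assemble the universal moduli space $\widetilde{M}:=\{([A],[g]):F_A^{+,g}=0\}/\G_P$ fibred over the Banach manifold $\mathscr{C}(X)$, and apply Sard-Smale. Following \cite{FU}, pp.\,69-73, the projection $\widetilde{M}^{\ast}\to\mathscr{C}(X)$ from the irreducible locus is Fredholm with an open dense set of regular values, and at any such $[g]$ the fibre $M_{ASD}^{\ast}(P,g)$ is a smooth manifold of the expected dimension, which forces $H^2_A=0$ and hence $\mu(A)>0$ at every $[A]$. Extending the transversality argument to each stratum $M_{ASD}(P_\ell,g)$ in (\ref{45}) via \cite{DK}, \S4.3.3 and intersecting the finitely many resulting open dense subsets produces the desired $\mathscr{C}(X,p_1(P))$.

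The hard part will be the surjectivity step in the Sard-Smale argument: for a non-zero $v\in\ker d_A^{+,\ast}$ one must produce an infinitesimal deformation $\delta g$ of the conformal class for which the corresponding variation of the self-dual curvature pairs non-trivially with $v$. This rests on unique continuation for the elliptic operator $d_A+d_A^{\ast}$ (to ensure $v$ is non-zero on an open set) and on the irreducibility of $A$ on that open set, so the conclusion of the second step feeds directly into the third.
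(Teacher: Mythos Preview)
The paper does not give its own proof of this theorem: it is quoted verbatim from Feehan (\cite{PF1} Theorem~34.23, \cite{PF2} Theorem~3.5) and used as a black box to feed into Corollary~\ref{43} and the proof of the main result. So there is no argument in the paper to compare your proposal against.

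That said, your outline is the right framework --- identify $\mu(A)>0$ with $H^{2}_{A}=0$, then run Freed--Uhlenbeck transversality over the irreducible locus and extend over the Uhlenbeck strata via \cite{DK}~\S4.3.3 --- and this is indeed how the cited references proceed. One point to be careful with is your treatment of reducibles in case~(1): the condition $b^{+}(X)=0$ does \emph{not} exclude reducible ASD connections (on the contrary, every line bundle then carries an ASD $U(1)$ connection, which is precisely the mechanism behind Donaldson's diagonalisation theorem). What $b^{+}(X)=0$ buys you is that the $\mathbb{R}$-summand of $\ker d_{A}^{+,\ast}$ at a reducible, namely $\mathcal{H}^{2,+}(X)$, vanishes; the twisted summand still has to be killed by a separate generic-metrics argument. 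Likewise in case~(2), $w_{2}(P)\neq 0$ does not by itself obstruct all $U(1)$ reductions of an $SO(3)$ bundle --- it only constrains which line bundles can occur --- so the role of that hypothesis is more delicate than you indicate. These are the places where Feehan's argument does real work beyond the classical Freed--Uhlenbeck statement.
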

For a small enough $\varepsilon(g,k(P))\in(0,1]$,\ if $\|F^{+}_{A}\|_{L^{2}_{1}}\leq\varepsilon$,\ the eigenvalue $\mu(A)$ also has a non-zero bound.
\begin{cor}\label{43}(\cite{PF1} Corollary 34.28,\ \cite{PF2} Corollary 3.9)
Assume the hypotheses of Theorem \ref{50} and that $g$ is generic.\ Then these are constants,\ $\de=\de (P,g)\in(0,1]$ and $\mu_{0}=\mu_{0}(P,g)>0$,\ such that
\begin{equation}\nonumber
\begin{split}
&\mu(A)\geq\mu_{0},\ [A]\in M_{ASD}(P,g),\\
&\mu(A)\geq\frac{\mu_{0}}{2},\ [A]\in\mathfrak{B}_{\de}(P,g),\\
\end{split}
\end{equation}
where $\mathfrak{B}_{\de}(P,g):=\{[A]:\|F^{+}_{A}\|_{L^{2}(X)}<\de\}.$
\end{cor}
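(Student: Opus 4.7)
The plan is to prove both inequalities by contradiction, reducing them to the pointwise positivity asserted by Theorem~\ref{50} via the Uhlenbeck compactification~(\ref{45}) and the Weitzenb\"ock formula~(\ref{28}). An $L^2$-normalized eigenform $v\in\Omega^+(\mathfrak{g}_P)$ realizing $\mu(A)$ satisfies $d^+_A d^{+,*}_A v=\mu(A)\,v$, equivalently
\begin{equation}\nonumber
\nabla_A^*\nabla_A v+\bigl(\tfrac{1}{3}R_g-2\omega_g^+\bigr)v+\{F^+_A,v\}=2\mu(A)\,v.
\end{equation}
The last term vanishes pointwise when $A$ is ASD, and in general its $L^2$-norm is bounded by $\|F^+_A\|_{L^2}\|v\|_{L^\infty}$; the $L^\infty$ bound on $v$ follows from Theorem~\ref{116} applied to this equation once $\|F^+_A\|_{L^p}$ is controlled for some $p>2$, which holds along our sequences after elliptic bootstrapping from $\|F^+_A\|_{L^2}\to 0$.

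First I would establish the bound on $M_{ASD}(P,g)$. Suppose $[A_n]\in M_{ASD}(P,g)$ with $\mu(A_n)\to 0$ and pick $L^2$-normalized eigenforms $v_n$. The Uhlenbeck compactification~(\ref{45}) yields a subsequence converging to $([A_\infty],\mathbf{x})\in M_{ASD}(P_l,g)\times\mathrm{Sym}^l(X)$; after local gauge transformations, $A_n\to A_\infty$ in $C^\infty_{\mathrm{loc}}(X\setminus\mathbf{x})$. Uniform interior elliptic estimates on the eigenvalue equation then produce a subsequential limit $v_\infty\in L^2(X;\Omega^+(\mathfrak{g}_{P_l}))$ with $d^+_{A_\infty}d^{+,*}_{A_\infty}v_\infty=0$, extended across the finite set $\mathbf{x}$ by removal of singularities for $L^2$ sections in an elliptic kernel. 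Theorem~\ref{50} applied to $P_l$ then forces $v_\infty\equiv 0$, contradicting $\|v_n\|_{L^2}=1$ provided the $L^2$-mass of $v_n$ does not escape entirely into $\mathbf{x}$.

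Ruling out such concentration is the main obstacle. I would address each bubble point $x_k\in\mathbf{x}$ by a rescaling argument at the bubbling scale: the ASD equation is scale-invariant, so the appropriate blow-up of $A_n$ at $x_k$ yields a limit instanton on $\mathbb{R}^4$, and full concentration of $\|v_n\|_{L^2}$ at $x_k$ would produce a nonzero $L^2$ self-dual two-form on $\mathbb{R}^4$ annihilated by $d^{+,*}$ of a finite-energy instanton, which is excluded by standard vanishing/Liouville results. Once the first bound is in place, the extension to $\mathfrak{B}_\delta(P,g)$ follows by the same contradiction scheme: for a sequence $[A_n]\in\mathfrak{B}_{\delta_n}$ with $\delta_n\to 0$ and $\mu(A_n)<\mu_0/2$, the extra term satisfies $\|\{F^+_{A_n},v_n\}\|_{L^2}\to 0$, while the topological identity $\|F_{A_n}\|^2_{L^2}=2\|F^+_{A_n}\|^2_{L^2}-4\pi^2 p_1(P)$ keeps the total energy bounded; Uhlenbeck compactness then produces an ASD limit $A_\infty$ on some $P_l$, and the no-concentration argument yields $v_\infty\neq 0$, so the first bound applied at $A_\infty$ gives $\liminf\mu(A_n)\geq\mu_0$, contradicting $\mu(A_n)<\mu_0/2$.
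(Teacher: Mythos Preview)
The paper does not supply a proof of this corollary at all: it is stated purely as a citation of \cite[Corollary~34.28]{PF1} and \cite[Corollary~3.9]{PF2}, and the text proceeds immediately to \S4.2 without argument. So there is nothing in the paper against which to compare your proposal line by line.

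That said, your outline is essentially the strategy used in Feehan's cited proofs: one shows that the least-eigenvalue function $[A]\mapsto\mu(A)$ extends continuously over the Uhlenbeck compactification $\bar{M}_{ASD}(P,g)$ in (\ref{45}), invokes Theorem~\ref{50} on each stratum $M_{ASD}(P_l,g)$ to get pointwise positivity, and then uses compactness of $\bar{M}_{ASD}(P,g)$ to produce the uniform lower bound $\mu_0$; the extension to $\mathfrak{B}_\delta(P,g)$ then follows by the same sequential compactness argument you describe. Two points in your sketch deserve care if you want to make this self-contained. First, the ``no concentration of $v_n$ at bubble points'' step is the genuine technical content: Feehan handles it not by a Liouville-type vanishing on $\mathbb{R}^4$ but by proving continuity of $\mu(\cdot)$ across ideal boundary points directly, tracking the eigenvalue equation under rescaling and gluing; your appeal to ``standard vanishing/Liouville results'' for an $L^2$ self-dual form in $\ker d^{+,*}$ over an instanton on $\mathbb{R}^4$ is not quite standard and would need a precise reference or argument. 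Second, your remark about bootstrapping $\|F^+_A\|_{L^2}\to 0$ to $L^p$ control for $p>2$ is unnecessary and not obviously valid for an arbitrary $A\in\mathfrak{B}_\delta$; in the actual argument one only needs $\|F^+_{A_n}\|_{L^2}\to 0$ together with the uniform energy bound you correctly derive from the Pontrjagin identity, since Uhlenbeck compactness operates at the $L^2$ level.
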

\subsection{Uniform positive lower bounds for extra fields}

Let $A$ be fixed,\ any connection $B$ can be written uniquely as
$$B=A+a\ with\ a\in\Om^{1}(\mathfrak{g}_{P}).$$
Therefore if $B$ has anti-self-dual curvature,\ then
\begin{equation}\label{13}
0=F^{+}_{A}+d^{+}_{A}a+(a\wedge a)^{+}.
\end{equation}
Conversely,\ if $a\in\Ga(\Om^{1}(\mathfrak{g}_{P}))$ satisfies (\ref{13}),\ then $B=A+a$ has anti-self-dual curvature.\ Because the operator $d^{+}_{A}$ is not properly elliptic,\ it is convenient to write $a=d^{+,\ast}_{A}u$ for $u\in\Om^{+}(\mathfrak{g}_{P})$ and replace (\ref{13}) by
\begin{equation}\label{14}
d^{+}_{A}d^{+,\ast}_{A}u+(d^{+,\ast}_{A}u\wedge d^{+,\ast}_{A}u)^{+}=-F^{+}_{A}.
\end{equation}
(\ref{14}) is properly elliptic system.\ Notice that if $A$ is anti-self-dual,\ then (\ref{14}) automatically has a solution,\ namely $u=0$.\ If $F^{+}_{A}$ is small in an appropriate norm,\ but non-zero,\ it is still reasonable to assume that (\ref{14}) has a solution $u$ which also small.

If $\G(\cdot,\cdot)$ denotes the Green kernel of the Laplace operator,\ $d^{\ast}d$,\ on $\Om^{2}(X)$,\ we define
\begin{equation}\nonumber
\begin{split}
&\|v\|_{L^{\sharp}(X)}:=\sup_{x\in X}\int_{X}\G(x,y)|v|(y)dvol_{g}(y),\\
&\|v\|_{L^{\sharp,2}(X)}:=\|v\|_{L^{\sharp}(X)}+\|v\|_{L^{2}(X)},\ \forall v\in\Om^{2}(\mathfrak{g}_{P}).\\
\end{split}
\end{equation}
We recall that $\G(x,y)$ has a singularity comparable with $dist_{g}(x,y)^{-2}$,\ when $x,y\in X$ are close \cite{IC}.\ The norm $\|v\|_{L^{2}(X)}$ is conformally invariant and $\|v\|_{L^{\sharp}(X)}$ is scale invariant.\ One can show that $\|v\|_{L^{\sharp}(X)}\leq c_{p}\|v\|_{L^{p}(X)}$ for every $p>2$,\ where $c_{p}$ depends at most on p and the Riemanian metric,\ $g$,\ on $X$.
\begin{thm}(\cite{FL1} Proposition 7.6)\label{15}
Let $G$ be a compact Lie group,\ $P$ a principal $G$-bundle over a compact,\ connected,\ four-dimensional manifold,\ $X$,\ with Riemannian metric,\ $g$,\ and $E_{0},\mu_{0}\in(0,\infty)$ constants.\ Then there are constants,\ $C_{0}=C_{0}(E_{0},g,\mu_{0})\in(0,\infty)$ and $\eta=\eta(E_{0},g,\mu_{0})\in(0,1]$,\ with the following significance.\ If $A$ is a $C^{\infty}$ connection on $P$ such that
\begin{equation}\nonumber
\begin{split}
&\mu(A)\geq\mu_{0},\\
&\|F_{A}^{+}\|_{L^{\sharp,2}(X)}\leq\eta,\\
&\|F_{A}\|_{L^{2}(X)}\leq E_{0},\\
\end{split}
\end{equation}
then there is a anti-self-dual connection,\ $A_{asd}$ on $P$, of class $C^{\infty}$ such that
$$\|A_{asd}-A\|_{L^{2}_{1}(X)}\leq C_{0}\|F^{+}_{A}\|_{L^{\sharp,2}(X)}.$$
\end{thm}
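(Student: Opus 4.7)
The strategy is to look for the anti-self-dual connection in the form $A_{asd} = A + d^{+,\ast}_A u$, with $u \in \Om^{+}(\mathfrak{g}_P)$, and to produce $u$ by solving the quasilinear elliptic system (\ref{14}),
\begin{equation}\nonumber
d^{+}_A d^{+,\ast}_A u + (d^{+,\ast}_A u \wedge d^{+,\ast}_A u)^{+} = -F^{+}_A,
\end{equation}
via a Banach fixed point iteration. The hypothesis $\mu(A) \geq \mu_0$ is what guarantees that the linear operator $L := d^{+}_A d^{+,\ast}_A$ has a bounded Green operator $G_A := L^{-1}$, while the smallness of $\|F^{+}_A\|_{L^{\sharp,2}}$ supplies the small data needed to close the iteration.

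The first step is the key linear estimate. From $\mu(A) \geq \mu_0$ one immediately has $\|v\|_{L^2} \leq \mu_0^{-1/2} \|d^{+,\ast}_A v\|_{L^2}$, and together with the Weitzenb\"ock formula (\ref{28}) and the coefficient bound $\|F_A\|_{L^2} \leq E_0$ this upgrades to $L^2_{2,A}$-elliptic control by standard arguments. The crucial refinement for the nonlinear scheme is a scale-invariant bound of the form
\begin{equation}\nonumber
\|\na_A G_A w\|_{L^2(X)} + \|G_A w\|_{L^4(X)} \leq C_1 \|w\|_{L^{\sharp,2}(X)},
\end{equation}
with $C_1 = C_1(E_0, g, \mu_0)$. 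This is obtained by comparison with the scalar Laplace Green kernel $\G(x,y)$ whose singularity, of order $\mathrm{dist}_g(x,y)^{-2}$, is precisely the one encoded in $\|\cdot\|_{L^{\sharp}}$: Weitzenb\"ock lets one dominate $L$ by $\tfrac{1}{2}\na^{\ast}_A\na_A$ modulo a zero-order $F_A$ term, which transfers pointwise Green's kernel estimates from the Laplacian to $L$.

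Next, on the Banach space $\X$ of $u \in \Om^{+}(\mathfrak{g}_P)$ with $\|u\|_{\X} := \|\na_A d^{+,\ast}_A u\|_{L^2(X)} + \|d^{+,\ast}_A u\|_{L^4(X)}$, define $T(u) := -G_A\bigl(F^{+}_A + (d^{+,\ast}_A u \wedge d^{+,\ast}_A u)^{+}\bigr)$. Using the continuous inclusion $L^p \subset L^{\sharp}$ for $p > 2$ gives $\|(d^{+,\ast}_A u \wedge d^{+,\ast}_A u)^{+}\|_{L^{\sharp,2}} \leq C_2 \|d^{+,\ast}_A u\|_{L^4}^2$, which combined with the linear estimate yields
\begin{equation}\nonumber
\|T(u)\|_{\X} \leq C_1 \|F^{+}_A\|_{L^{\sharp,2}} + C_1 C_2 \|u\|_{\X}^2
\end{equation}
and an analogous Lipschitz bound. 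For $\eta$ sufficiently small, $T$ is a contraction on the ball of radius $R := 2C_1 \|F^{+}_A\|_{L^{\sharp,2}}$; the resulting fixed point $u$ satisfies $\|A_{asd} - A\|_{L^2_1(X)} \leq C_0 \|F^{+}_A\|_{L^{\sharp,2}}$. Standard elliptic bootstrapping applied to $F^{+}_{A_{asd}} = 0$ in Coulomb gauge relative to the smooth $A$ then upgrades $A_{asd}$ to $C^{\infty}$.

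The main obstacle is calibrating the norms so that $G_A$ lands in a space where the nonlinearity is quadratically bounded back into $L^{\sharp,2}$, with constants depending only on $(E_0, g, \mu_0)$ and not on $A$ itself. The $L^{\sharp}$ norm is tailored for exactly this purpose: it is scale-invariant, defined by convolution with the scalar Green kernel, and dominated by every $L^p$ with $p > 2$, which is what permits the uniform $C_0$ claimed in the conclusion.
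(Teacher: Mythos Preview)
The paper does not supply its own proof of this theorem: it is quoted verbatim as \cite{FL1} Proposition 7.6 and used as a black box, so there is nothing in the paper to compare your argument against. Your outline is in fact the standard Taubes--Feehan--Leness strategy that underlies the cited result: seek $A_{asd}=A+d^{+,\ast}_{A}u$, invert $d^{+}_{A}d^{+,\ast}_{A}$ using the spectral gap $\mu(A)\geq\mu_{0}$, and run a contraction on a ball whose radius is governed by $\|F^{+}_{A}\|_{L^{\sharp,2}}$, with the $L^{\sharp}$ norm chosen precisely so that the Green operator maps $L^{\sharp,2}$ into a space where the quadratic term $(d^{+,\ast}_{A}u\wedge d^{+,\ast}_{A}u)^{+}$ is controlled back in $L^{\sharp,2}$.

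One point worth tightening in your sketch: the claim that the Weitzenb\"ock comparison transfers pointwise Green-kernel bounds from the scalar Laplacian to $L=d^{+}_{A}d^{+,\ast}_{A}$ with constants depending only on $(E_{0},g,\mu_{0})$ is the genuinely delicate step, since the curvature term $\{F^{+}_{A},\cdot\}$ in (\ref{28}) is not pointwise small, only $L^{2}$-bounded by $E_{0}$. In the actual proof this is handled not by a direct kernel comparison but by a rearrangement/absorption argument at the level of integral inequalities (using that $\|F^{+}_{A}\|_{L^{\sharp}}$ is small and $\|F_{A}\|_{L^{2}}\leq E_{0}$), so your one-line justification is a bit optimistic even if the conclusion is right.
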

Theorem \ref{15} requires that $F^{+}_{A}$ is small in the sense that $\|F_{A}\|_{L^{\sharp,2}(X)}\leq\varepsilon$,\ for a suitable $\varepsilon\in(0,1]$.
By the definition of $\|F_{A}^{+}\|_{L^{\sharp,2}(X)}$,\ we have $$\|F_{A}^{+}\|_{L^{\sharp,2}(X)}\leq C(p,X)\|F_{A}^{+}\|_{L^{\infty}}.$$
Since $\|F_{A}\|^{2}_{L^{2}}=2\|F_{A}^{+}\|^{2}_{L^{2}}+8\pi^{2}k(P)$,\ where
$$k(P):=-\frac{1}{8\pi^{2}}\int_{X}tr(F_{A}\wedge F_{A})\in\mathbb{Z}.$$
then we can choose $\|F_{A}^{+}\|_{L^{\infty}}$ sufficiently small such that $\|F_{A}^{+}\|_{L^{\sharp,2}(X)}$ and $\|F_{A}\|_{L^{2}}$ satisfy the conditions in theorem \ref{15}.
\begin{cor}\label{17}
Assume the hypotheses of Theorem \ref{15}.\ Then there are constants,\ $C=C(g,\mu_{0})\in(0,\infty)$ and $\varepsilon=\varepsilon(g,\mu_{0})\in(0,1]$,\ with the following significance.\ If $A$ is a $C^{\infty}$ connection on $P$ such that
\begin{equation}\nonumber
\begin{split}
&\mu(A)\geq\mu_{0},\\
&\|F^{+}_{A}\|_{L^{\infty}(X)}\leq\varepsilon,\\
\end{split}
\end{equation}
then there is a anti-self-dual connection,\ $A_{asd}$ on $P$, of class $C^{\infty}$ such that
$$\|A_{asd}-A\|_{L^{2}_{1}(X)}\leq C\|F_{A}^{+}\|_{L^{\infty}(X)}.$$
\end{cor}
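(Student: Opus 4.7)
The plan is to deduce Corollary \ref{17} as a direct consequence of Theorem \ref{15}, by using the pointwise $L^{\infty}$ bound on $F_A^+$ to verify all three hypotheses of that theorem.

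First I would control the $L^{\sharp,2}$ norm in terms of the $L^{\infty}$ norm. On a compact manifold, the Hölder inequality gives $\|F_A^+\|_{L^p(X)} \leq \mathrm{Vol}(X,g)^{1/p}\,\|F_A^+\|_{L^{\infty}(X)}$ for every $p\in(2,\infty)$. Combining this with the inequality $\|v\|_{L^{\sharp}(X)} \leq c_p \|v\|_{L^p(X)}$ quoted immediately before Theorem \ref{15}, and with $\|v\|_{L^2} \leq \mathrm{Vol}(X,g)^{1/2}\|v\|_{L^{\infty}}$, one obtains
\begin{equation}\nonumber
\|F_A^+\|_{L^{\sharp,2}(X)} \;\leq\; C_1(g)\,\|F_A^+\|_{L^{\infty}(X)}.
\end{equation}

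Second I would control the full $L^2$ norm of the curvature via the Chern--Weil identity stated in the paper, namely $\|F_A\|_{L^2}^2 = 2\|F_A^+\|_{L^2}^2 + 8\pi^2 k(P)$. Since $P$ is fixed and $k(P)$ is a topological invariant, and since the first step already bounds $\|F_A^+\|_{L^2}$ by $\|F_A^+\|_{L^{\infty}}$, we get a bound $\|F_A\|_{L^2(X)} \leq E_0$ for some $E_0 = E_0(g,P)$, provided $\|F_A^+\|_{L^{\infty}}$ is, say, at most $1$. With this choice of $E_0$, Theorem \ref{15} furnishes corresponding constants $C_0 = C_0(E_0,g,\mu_0)$ and $\eta = \eta(E_0,g,\mu_0)$.

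Third I would choose $\varepsilon = \varepsilon(g,\mu_0) \in (0,1]$ small enough that $C_1(g)\varepsilon \leq \eta$; then the hypothesis $\|F_A^+\|_{L^{\infty}(X)} \leq \varepsilon$ together with the two preceding steps guarantees all three hypotheses of Theorem \ref{15}. Applying that theorem produces a smooth anti-self-dual connection $A_{asd}$ on $P$ with
\begin{equation}\nonumber
\|A_{asd}-A\|_{L^2_1(X)} \;\leq\; C_0\,\|F_A^+\|_{L^{\sharp,2}(X)} \;\leq\; C_0\,C_1(g)\,\|F_A^+\|_{L^{\infty}(X)},
\end{equation}
and setting $C := C_0 C_1$ concludes the proof.

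There is essentially no genuine obstacle here; the statement is a packaging of Theorem \ref{15} with more easily verified pointwise hypotheses. The only mild subtlety is the implicit dependence of the final constant on $k(P)$ (through $E_0$), which is absorbed into the statement since $P$ is held fixed. Everything else is bookkeeping with Hölder inequalities and the scalar bound $\|v\|_{L^{\sharp}} \leq c_p\|v\|_{L^p}$ recalled in the preceding paragraph.
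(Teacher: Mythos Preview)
Your proposal is correct and follows essentially the same approach as the paper: the paragraph immediately preceding Corollary~\ref{17} in the paper bounds $\|F_A^+\|_{L^{\sharp,2}}$ by $C(p,X)\|F_A^+\|_{L^\infty}$ and uses the Chern--Weil identity $\|F_A\|_{L^2}^2 = 2\|F_A^+\|_{L^2}^2 + 8\pi^2 k(P)$ to bound $\|F_A\|_{L^2}$, exactly as you do. Your observation about the implicit dependence on $k(P)$ through $E_0$ is accurate and is likewise absorbed in the paper since $P$ is fixed.
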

\begin{lem}(\cite{PF1} Lemma 34.6,\ \cite{PF2} Lemma A.2)\label{58}
Let $X$ be a closed,\ four-dimensional,\ oriented,\ smooth manifold with Riemannian metric,\ $g$,\ and $q\in[4,\infty)$.\ Then there are positive constants,\ $c=c(g,q)\geq1$ and $\varepsilon=\varepsilon(g,q)\in(0,1]$,\ with the following significance.\ Let $r\in[4/3,2)$ be defined by $1/r=2/d+1/q$.\ Let $G$ be a compact Lie group and $A$ a connection of class $C^{\infty}$ on a principal bundle $P$ over $X$ that obeys the curvature $$\|F_{A}\|_{L^{2}(X)}\leq\varepsilon.$$
If $v\in\Om^{2,+}(X,\mathfrak{g}_{P})$,\ then
$$\|v\|_{L^{q}(X)}\leq c(\|d^{+}_{A}d^{+,\ast}_{A}v\|_{L^{r}(X)}+\|v\|_{L^{r}(X)}).$$
\end{lem}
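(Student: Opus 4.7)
The plan is to reduce the estimate, via the Weitzenb\"ock formula (\ref{28}) and a Calder\'on--Zygmund bound for the rough Laplacian, to the critical Sobolev embedding $L^{r}_{2}(X)\hookrightarrow L^{q}(X)$ in dimension four, which holds precisely when $1/r=1/2+1/q$. Rewriting (\ref{28}) as
$$\na_A^*\na_A v = 2 d_A^+ d_A^{+,\ast} v - \bigl(\tfrac{1}{3}R_g - 2\w_g^+\bigr)v - \{F_A^+, v\},$$
taking $L^r$ norms, and applying H\"older on the curvature cross-term with $1/r=1/2+1/q$ yields
$$\|\na_A^*\na_A v\|_{L^r(X)} \leq 2\|d_A^+d_A^{+,\ast}v\|_{L^r(X)} + C(g)\|v\|_{L^r(X)} + c\|F_A^+\|_{L^2(X)}\|v\|_{L^q(X)}.$$

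Next, I would establish the gauge-invariant Calder\'on--Zygmund estimate
$$\|v\|_{L^{r}_{2,A}(X)} \leq c\bigl(\|\na_A^*\na_A v\|_{L^r(X)} + \|v\|_{L^r(X)}\bigr),$$
and then apply the Sobolev embedding $\|v\|_{L^q(X)} \leq c\|v\|_{L^{r}_{2,A}(X)}$. The Calder\'on--Zygmund step is the main obstacle, since $A$ is not assumed small in any norm a priori --- only $\|F_A\|_{L^2(X)}$ is small. I would handle this by covering $X$ by finitely many geodesic balls of fixed radius; on each ball the smallness of $\|F_A\|_{L^2}$ allows one to invoke Uhlenbeck's theorem (Theorem \ref{23}) to pass to a Coulomb gauge representative with $\|A-\Gamma\|_{L^4}$ controlled by $\|F_A\|_{L^2(B)}$. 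One then expands $\na_A^*\na_A = \na^*\na + \text{first-order in }A + \text{zero-order in }A\otimes A$, applies the scalar Calder\'on--Zygmund inequality for the flat Laplacian, and absorbs the cross-terms using H\"older in $L^4\times L^{2r/(2-r)}$, after passing to scalar estimates via Kato's inequality; a partition of unity then globalizes the estimate.

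Combining the three ingredients yields
$$\|v\|_{L^q(X)} \leq c\bigl(\|d_A^+ d_A^{+,\ast}v\|_{L^r(X)} + \|v\|_{L^r(X)} + \|F_A\|_{L^2(X)}\|v\|_{L^q(X)}\bigr).$$
Finally, choosing $\varepsilon=\varepsilon(g,q)$ small enough that $c\|F_A\|_{L^2(X)}\leq c\varepsilon\leq 1/2$, the last term on the right is absorbed into the left-hand side, giving the desired inequality. Dependence of $c$ on $q$ enters through the Sobolev constant at the critical exponent and through the H\"older splitting; dependence on $g$ enters through the curvature terms in the Weitzenb\"ock formula and the uniform geodesic-ball radius in the Uhlenbeck patching.
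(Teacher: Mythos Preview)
The paper does not prove this lemma at all; it is quoted without argument from Feehan \cite{PF1,PF2}, so there is no ``paper's own proof'' to compare against. That said, your outline is precisely the standard route used to establish this kind of estimate (and is essentially how Feehan argues it): Weitzenb\"ock formula to trade $d_A^+d_A^{+,\ast}$ for $\nabla_A^*\nabla_A$ plus a curvature cross-term, $L^r$ Calder\'on--Zygmund for the connection Laplacian, critical Sobolev $L^r_2\hookrightarrow L^q$ via $1/r=1/2+1/q$, and absorption of $\|F_A\|_{L^2}\|v\|_{L^q}$ on the right by choosing $\varepsilon$ small.

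Two small points worth tightening. First, when you invoke the Sobolev embedding $\|v\|_{L^q}\le c\|v\|_{L^r_{2,A}}$, you need the constant to be independent of $A$; this is not automatic for a connection-dependent Sobolev norm, but your local Uhlenbeck-gauge argument does deliver it, since in Coulomb gauge on each ball the $L^r_{2,A}$ and $L^r_{2,\Gamma}$ norms are equivalent with constants controlled by $\|F_A\|_{L^2}\le\varepsilon$. It is worth making that explicit. Second, in the partition-of-unity patching you will pick up commutator terms of the form $\|\nabla_A v\|_{L^r}$; these are not on the right-hand side of the final estimate, so you should note that they are handled by interpolation, $\|\nabla_A v\|_{L^r}\le \epsilon\|v\|_{L^r_{2,A}}+C_\epsilon\|v\|_{L^r}$, and absorbed. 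With those two clarifications the argument is complete.
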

\begin{lem}\label{59}
Let $G$ be a compact Lie group,\ let $P$ be a principal $G$-bundle over a closed,\ four-dimensional,\ oriented,\ smooth manifold,\ $X$,\ with Riemannian metric,\ $g$,\ and $\mu_{0}\in(0,\infty)$ a constant,\ let $p\in[2,4)$ and $q\in[4,\infty)$ is defined by $1/p=1/4+1/q$.\ Then there exists constants,\ $\de=\de(g,p)\in(0,1)]$ and $C=C(g,p,\mu)\in[1,\infty)$,\ with the following significance.\ If $A=A_{asd}-d^{+,\ast}_{A}u$ is a connection on $P$ such that
\begin{equation}\nonumber
\begin{split}
&\mu(A)\geq\mu_{0},\\
&\|d_{A}^{+,\ast}u\|_{L^{2}_{1}(X)}\leq\de,\\
\end{split}
\end{equation}
where $A_{asd}$ is a anti-self-dual connection.\ Then
\begin{equation}\label{114}
\|d^{+,\ast}_{A}u\|_{L^{q}(X)}\leq C\|F^{+}_{A}\|_{L^{p}(X)}.
\end{equation}
\end{lem}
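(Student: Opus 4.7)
The plan is to apply $L^{p}$-elliptic regularity for $L:=d^{+}_{A}d^{+,\ast}_{A}$ to the potential $u$, convert this into an estimate on $a:=d^{+,\ast}_{A}u$ via Sobolev embedding, and absorb the quadratic nonlinearity using the smallness hypothesis $\|a\|_{L^{2}_{1}}\leq\de$.

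First I would extract the key identity. Since $A_{asd}=A+a$ is anti-self-dual, expanding $F^{+}_{A_{asd}}=0$ gives
$$Lu=d^{+}_{A}d^{+,\ast}_{A}u=-F^{+}_{A}-(a\wedge a)^{+}.$$
Because $\mu(A)\geq\mu_{0}>0$, the self-adjoint elliptic operator $L$ on $\Om^{2,+}(X,\mathfrak{g}_{P})$ has trivial kernel, so $L^{2}$-invertibility combined with standard $L^{p}$-elliptic regularity produces a constant $C_{1}=C_{1}(g,p,\mu_{0})$ with
$$\|u\|_{L^{p}_{2,A}(X)}\leq C_{1}\|Lu\|_{L^{p}(X)}.$$
The Weitzenb\"ock curvature terms are under control because $\|F^{+}_{A}\|_{L^{2}}$ is bounded through the key identity, while $\|F_{A}\|_{L^{2}}^{2}=2\|F^{+}_{A}\|_{L^{2}}^{2}+8\pi^{2}k(P)$ bounds $\|F_{A}\|_{L^{2}}$ topologically.

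Next, since $p\in[2,4)$ and $1/q=1/p-1/4$, the Sobolev embedding $L^{p}_{1}\hookrightarrow L^{q}$ on the four-manifold $X$ gives
$$\|a\|_{L^{q}(X)}\leq C_{2}\|u\|_{L^{p}_{2,A}(X)}\leq C_{1}C_{2}\bigl(\|F^{+}_{A}\|_{L^{p}(X)}+\|(a\wedge a)^{+}\|_{L^{p}(X)}\bigr).$$
The nonlinear term is handled by H\"older with $1/p=1/q+1/4$, yielding $\|(a\wedge a)^{+}\|_{L^{p}}\leq C_{3}\|a\|_{L^{q}}\|a\|_{L^{4}}$, and then the Sobolev embedding $L^{2}_{1}\hookrightarrow L^{4}$ together with $\|a\|_{L^{2}_{1}(X)}\leq\de$ yields $\|a\|_{L^{4}(X)}\leq C_{4}\de$. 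Hence
$$\|a\|_{L^{q}(X)}\leq C_{1}C_{2}\|F^{+}_{A}\|_{L^{p}(X)}+C_{1}C_{2}C_{3}C_{4}\de\|a\|_{L^{q}(X)}.$$
Choosing $\de=\de(g,p,\mu_{0})$ small enough that $C_{1}C_{2}C_{3}C_{4}\de\leq 1/2$ allows absorption of the last term, producing the desired estimate $\|a\|_{L^{q}(X)}\leq 2C_{1}C_{2}\|F^{+}_{A}\|_{L^{p}(X)}$.

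The main obstacle is establishing the uniform $L^{p}$-elliptic estimate in the first step with a constant depending only on $g,p,\mu_{0}$. The $L^{2}$-invertibility of $L$ is automatic from the hypothesis on $\mu(A)$, but extending to $L^{p}$ via Calder\'on--Zygmund theory requires controlling the zeroth-order Weitzenb\"ock term, which depends on $F_{A}$. This dependence is tamed by the topological bound on $\|F_{A}\|_{L^{2}}$ outlined above, in the spirit of the machinery already invoked in this paper from \cite{PF1,PF2}.
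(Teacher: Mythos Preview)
Your proposal is correct and follows essentially the same route as the paper: expand $F^{+}(A+d^{+,\ast}_{A}u)=0$ to get $d^{+}_{A}d^{+,\ast}_{A}u=-F^{+}_{A}-(a\wedge a)^{+}$, apply a uniform $L^{p}$-elliptic estimate for $d^{+}_{A}d^{+,\ast}_{A}$, pass to $\|a\|_{L^{q}}$ via the Sobolev embedding $L^{p}_{1}\hookrightarrow L^{q}$, bound the quadratic term by H\"older and $\|a\|_{L^{4}}\leq C\de$, and absorb. The only difference is that where you invoke ``standard $L^{p}$-elliptic regularity plus $L^{2}$-invertibility'' as a black box, the paper makes the uniformity of the constant explicit by first using smallness of $\de$ to force $\|F^{+}_{A}\|_{L^{2}}$ below the threshold of Lemma~\ref{58}, then combining that lemma with the eigenvalue bound $\|u\|_{L^{2}}\leq\mu(A)^{-1}\|d^{+}_{A}d^{+,\ast}_{A}u\|_{L^{2}}$ to kill the lower-order $\|u\|_{L^{p}}$ term in the Schauder estimate.
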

\begin{proof}
By the anti-self-dual equation,\ $F^{+}(A+d^{+,\ast}_{A}u)=0$,\ we obtain
\begin{equation}\label{48}
d_{A}^{+}d^{+,\ast}_{A}u+(d^{+,\ast}_{A}u\wedge d^{+,\ast}_{A}u)^{+}=-F_{A}^{+}.
\end{equation}
Then we have
\begin{equation}\nonumber
\begin{split}
\|F^{+}_{A}\|_{L^{2}(X)}&\leq2\|d^{+,\ast}_{A}u\|^{2}_{L^{4}(X)}+\|d_{A}^{+}d^{+,\ast}_{A}u\|_{L^{2}(X)}\\
&\leq 2\|d^{+,\ast}_{A}u\|^{2}_{L^{2}_{1}(X)}+\|d^{+,\ast}_{A}u\|^{2}_{L^{2}_{1}(X)}.\\
\end{split}
\end{equation}
We can choose constant $\de$ small enough,\ such that
$$\|F^{+}_{A}\|_{L^{2}(X)}\leq\varepsilon(g),$$
where the constant $\varepsilon(g)$ is the constant in Lemma \ref{58}.\ Then we have a priori estimate for all $v\in\Om^{2,+}(X,\mathfrak{g}_{P})$,
$$\|v\|_{L^{2}_{1}(X)}\leq c\|d^{+}_{A}d^{+,\ast}_{A}v\|_{L^{4/3}(X)}+\|v\|_{L^{2}(X)}.$$
By Sobolev imbedding $L^{2}_{1}\hookrightarrow L^{p}$ ($p\leq4)$,\ we have
\begin{equation}\label{49}
\|v\|_{L^{p}(X)}\leq c_{p}\|d^{+}_{A}d^{+,\ast}_{A}v\|_{L^{4/3}(X)}+\|v\|_{L^{2}(X)}.
\end{equation}
For $p\in[2,4)$ and $q\in[4,\infty)$,\ equation (\ref{48}) gives
\begin{equation}\nonumber
\begin{split}
\|d^{+}_{A}d^{+,\ast}_{A}u\|_{L^{p}(X)}&\leq\|F_{A}^{+}\|_{L^{p}(X)}+2\|d^{+,\ast}_{A}u\|_{L^{4}(X)}\|d^{+,\ast}_{A}u\|_{L^{q}(X)}\\
&\leq\|F_{A}^{+}\|_{L^{p}(X)}+2\de\|d^{+,\ast}_{A}u\|_{L^{q}(X)}\\
&\leq\|F_{A}^{+}\|_{L^{p}(X)}+2\de\|\na_{A}u\|_{L^{q}(X)}\\
&\leq\|F_{A}^{+}\|_{L^{p}(X)}+2\de C_{s}\|\na_{A}u\|_{L^{p}_{1,A}(X)}\\
&\leq\|F_{A}^{+}\|_{L^{p}(X)}+2\de C_{s}\|u\|_{L^{p}_{2,A}(X)}.\\
\end{split}
\end{equation}
where we have applied the Sobolev embedding $L^{p}_{1}\hookrightarrow L^{q}$ with embedding constant $C_{s}$ and Kato Inequality.

We have a priori $L^{p}$ estimate for the elliptic operator,\ $d^{+}_{A}d^{+,\ast}_{A}$,\ namely
$$\|u\|_{L^{p}_{2,A}(X)}\leq C_{1}(\|d^{+}_{A}d^{+,\ast}_{A}u\|_{L^{p}(X)}+\|u\|_{L^{p}(X)}).$$
Since $p\in[2,4)$,\ $\|u\|_{L^{4/3}(X)}\leq c\|u\|_{L^{2}(X)}\leq c\mu(A)^{-1}\|d^{+}_{A}d^{+,\ast}_{A}u\|_{L^{p}(X)},$\ then by (\ref{49}),\ we obtain
\begin{equation}\nonumber
\begin{split}
\|u\|_{L^{p}(X)}&\leq c_{p}(\|d^{+}_{A}d^{+,\ast}_{A}u\|_{L^{4/3}(X)}+\|u\|_{L^{2}(X)})\\
&\leq c_{p}(\|d^{+}_{A}d^{+,\ast}_{A}u\|_{L^{p}(X)}+\|v\|_{L^{2}(X)})\\
&\leq c_{p}(\|d^{+}_{A}d^{+,\ast}_{A}u\|_{L^{p}(X)}+\mu(A)^{-1}\|d^{+}_{A}d^{+,\ast}_{A}u\|_{L^{2}(X)})\\
&\leq c_{p}(1+\mu(A)^{-1})\|d^{+}_{A}d^{+,\ast}_{A}u\|_{L^{p}(X)}.\\
\end{split}
\end{equation}
Combing the preceding inequalities gives
\begin{equation}\nonumber
\begin{split}
\|u\|_{L^{p}_{2,A}(X)}&\leq C_{1}(\|d^{+}_{A}d^{+,\ast}_{A}u\|_{L^{p}(X)}+c_{p}(1+\mu(A)^{-1})\|d^{+}_{A}d^{+,\ast}_{A}u\|_{L^{2}(X)})\\
&\leq C_{2}\|d^{+}_{A}d^{+,\ast}_{A}u\|_{L^{p}(X)}\\
&\leq C_{2}(\|F^{+}_{A}\|_{L^{p}(X)}+\de C_{s}\|u\|_{L^{p}_{2,A}(X)}).\\
\end{split}
\end{equation}
Thus,\ for small enough $\de$ such that $C_{2}\de C_{s}<\frac{1}{2}$,\ rearrangement yields
\begin{equation}
\|u\|_{L^{p}_{2,A}(X)}\leq 2C_{2}\|F^{+}_{A}\|_{L^{p}(X)}.
\end{equation}
The inequality (\ref{114}) follows from
$$\|d^{+,\ast}_{A}u\|_{L^{q}(X)}\leq \kappa_{p}\|d^{+,\ast}_{A}u\|_{L^{p}_{1,A}(X)}\leq\kappa_{p}\|u\|_{L^{p}_{2,A}(X)}.$$
\end{proof}

We now have all the ingredients required to conclude the

\textbf{Proof of Theorem \ref{2}.}
By Theorem \ref{42},\ we can assume $(A,\phi)$ is smooth.\ From the Theorem \ref{16},\ we have
$$\|\phi\|_{L^{\infty}(X)}\leq C_{1}\|\phi\|_{L^{2}(X)},$$
where $C_{1}$ is only dependent on manifold.\ Since $(A,\phi)$ is a solution of Kapustin-Witten equations,\ then we have
$$\|F_{A}^{+}\|_{L^{\infty}(X)}=\|\phi\wedge\phi\|_{L^{\infty}(X)}\leq C_{2}\|\phi\|^{2}_{L^{2}(X)},$$
where $C_{2}$ is only dependent on manifold.
We can choose $\|\phi\|_{L^{2}(X)}\leq C$ sufficiently small such that $C_{2}C^{2}\leq\varepsilon$,\ where $\varepsilon$ is the constant in the hypothesis of
Corollary \ref{17},\ then
$$\|F_{A}^{+}\|_{L^{\infty}(X)}\leq\varepsilon.$$
From Corollary \ref{17},\ there exist a anti-self-dual connection $A_{0}$ such that
$$\|A-A_{0}\|_{L^{2}_{1}(X)}\leq C_{3}\|F^{+}_{A}\|_{L^{\infty}(X)}.$$
We can choose $\|\phi\|_{L^{2}(X)}\leq C$ sufficiently small such that $C_{2}C^{2}\leq\de$,\ where $\de$ is the constant in the hypothesis of Lemma \ref{59}.\ By Lemma \ref{59} then we obtain
$$\|A-A_{0}\|_{L^{q}(X)}\leq C_{3}\|F^{+}_{A}\|_{L^{p}(X)}.$$
where $1/p=1/4+1/q$,\ $p\in[2,4)$ and $q\in[4,\infty)$.\ Choosing $q=4$,\ $p=2$,\ hence we have
\begin{equation}
\|A-A_{0}\|_{L^{4}(X)}\leq C_{3}\|F^{+}_{A}\|_{L^{2}(X)}.
\end{equation}
Using the Weitezenb\"{o}ck formula \cite{FU} (6.25),\ we have
\begin{equation}\label{18}
(2d^{-,\ast}_{A_{0}}d^{-}_{A_{0}}+d_{A_{0}}d_{A_{0}}^{\ast})\phi=\na_{A_{0}}^{\ast}\na_{A_{0}}\phi+Ric\circ\phi.
\end{equation}
which provides  an integral inequality
\begin{equation}\label{19}
\|\na_{A_{0}}\phi\|^{2}_{L^{2}(X)}+\int_{X}\langle Ric\circ\phi,\phi\rangle\geq0.
\end{equation}
By the Weitezenb\"{o}ck formula \cite{FU} (6.25) again,\ we have
\begin{equation}\label{20}
(2d^{-,\ast}_{A}d^{-}_{A}+d_{A}d_{A}^{\ast})\phi=\na_{A}^{\ast}\na_{A}\phi+Ric\circ\phi+\ast[F^{+}_{A},\phi].
\end{equation}
Since $(d_{A}\phi)^{-}=d^{\ast}_{A}\phi=0$,\ we also obtain an integral equality
\begin{equation}\label{21}
\|\na_{A}\phi\|^{2}_{L^{2}(X)}+\int_{X}\langle Ric\circ\phi,\phi\rangle+4\|F^{+}_{A}\|^{2}=0.
\end{equation}
We have another integral inequality
\begin{equation}\label{22}
\begin{split}
\|\na_{A}\phi-\na_{A_{0}}\phi\|^{2}_{L^{2}(X)}&\leq\|[A-A_{0},\phi]\|^{2}_{L^{2}(X)}\\
&\leq C_{4}\|A-A_{0}\|^{2}_{L^{4}(X)}\|\phi\|^{2}_{L^{4}(X)}\\
&\leq C_{5}\|F^{+}_{A}\|^{2}_{L^{2}(X)}\|\phi\|^{2}_{L^{2}(X)}.\\
\end{split}
\end{equation}
Combing the preceding inequalities gives
\begin{equation}\nonumber
\begin{split}
0&\leq\|\na_{A_{0}}\phi\|^{2}_{L^{2}(X)}+\int_{X}\langle Ric\circ\phi,\phi\rangle\\
&\leq\|\na_{A}\phi\|^{2}_{L^{2}(X)}+\int_{X}\langle Ric\circ\phi,\phi\rangle+\|\na_{A}\phi-\na_{A_{0}}\phi\|_{L^{2}(X)}^{2}\\
&\leq(C_{6}\|\phi\|^{2}_{L^{2}(X)}-4)\|F^{+}_{A}\|^{2}_{L^{2}(X)}.\\
\end{split}
\end{equation}
We choose $\|\phi\|_{L^{2}(X)}\leq C$ sufficiently small such that $C_{6}\|\phi\|^{2}\leq2$,\ then
\begin{equation}\nonumber
F^{+}_{A}\equiv0.
\end{equation}

\textbf{Proof of Corollary \ref{92}.} The conclusion follow from Theorem \ref{2} and positive uniform lower bound on $\mu(A)$ provided by Corollary \ref{43}.

\section{Vafa-Witten equations}

In search of evidence for S-duality,\ Vafa and Witten explored their twist of $\mathcal{N}=4$ supersymmetric Yang-Mills theory \cite{VW}.\ Vafa-Witten introduced a set of gauge-theoretic equations on a $4$-manifold,\ the moduli space of solutions to the equations is expected to produce a possibly new invariant of some kind.\ The equations we consider involve a triple consisting of a connection and other extra fields coming from a principle bundle over $4$-manifold.\ We consider the following equations for a triple $(A,B,C)\in\mathcal{A}\times\Om^{2,+}(X,\mathfrak{g}_{P})\times\Om^{0}(X,\mathfrak{g}_{P})$,\
$$d_{A}C+d^{\ast}_{A}B=0,$$
$$F^{+}_{A}+\frac{1}{8}[B.B]+\frac{1}{2}[B,C]=0.$$
where $[B.B]\in\Om^{2,+}(X,\mathfrak{g}_{P})$ is defined in \cite{BM} Appendix A.\ We also define the gauge-equivariant map
$$VW(A,B,C)=\big{(}d_{A}C+d^{\ast}_{A}B,F^{+}_{A}+\frac{1}{8}[B.B]+\frac{1}{2}[B,C]\big{)}.$$
Mimicking the setup of Donaldson theory,\ the $VW$-moduli space is
$$M_{VW}(P,g):=\{(A,B,C)\mid VW(A,B,C)=0\}/\mathcal{G}_{P}.$$
We are interesting in the case $C=0$,\ for which the equations reduce to
$$d^{\ast}_{A}B=0,$$
$$F^{+}_{A}+\frac{1}{8}[B.B]=0.$$
\begin{thm}(\cite{BM} Theorem 2.1.1)
Let $X$ be a closed,\ oriented,\ four-manifold,\ $(A,B,C)$ be a solution of the Vafa-Witten equation.\ If $A$ be an irreducible $SU(2)$ or $SO(3)$ connection,\ then
$C=0$.
\end{thm}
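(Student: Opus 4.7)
My plan is a Weitzenb\"ock/Bochner argument for the extra field $C$. First, apply $d_{A}^{\ast}$ to the first Vafa--Witten equation $d_{A}C+d_{A}^{\ast}B=0$ to obtain $d_{A}^{\ast}d_{A}C=-(d_{A}^{\ast})^{2}B$. Using $d_{A}^{\ast}=-\ast d_{A}\ast$ in dimension four together with the self-duality $\ast B=B$ and the pointwise vanishing $F_{A}^{-}\wedge B=0$, one computes $(d_{A}^{\ast})^{2}B=-\ast[F_{A}^{+},B]$. Since $d_{A}^{\ast}d_{A}=\nabla_{A}^{\ast}\nabla_{A}$ on $\Omega^{0}(X,\mathfrak{g}_{P})$, this gives the pointwise identity
\[\nabla_{A}^{\ast}\nabla_{A}C=\ast[F_{A}^{+},B].\]

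Next, pair with $C$ and integrate over $X$. A short ad-invariance computation yields $\int_{X}\langle\ast[F_{A}^{+},B],C\rangle=\int_{X}\langle F_{A}^{+},[B,C]\rangle$, so after substituting $F_{A}^{+}=-\tfrac{1}{8}[B.B]-\tfrac{1}{2}[B,C]$ from the second Vafa--Witten equation one obtains
\[\|\nabla_{A}C\|_{L^{2}}^{2}+\tfrac{1}{2}\|[B,C]\|_{L^{2}}^{2}+\tfrac{1}{8}\int_{X}\langle[B.B],[B,C]\rangle=0.\]
The key algebraic fact, contained in \cite{BM} Appendix A for $G=SU(2)$ or $SO(3)$, is that $\langle[B.B],[B,C]\rangle_{\mathfrak{g}}=0$ pointwise: under the identification $\Lambda^{2,+}\cong\mathfrak{su}(2)\cong\mathrm{Im}\,\mathbb{H}$ used to define $[B.B]$, this reduces to the elementary vanishing $\langle u\times v,v\rangle=0$ for the cross product together with an antisymmetric triple-product sum. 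Consequently $\nabla_{A}C=0$ and $[B,C]=0$.

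Finally, $\nabla_{A}C=0$ realises $C$ as a covariantly constant section of $\mathfrak{g}_{P}$, i.e.\ an element of the Lie algebra of the stabilizer $\Gamma_{A}$. Since $A$ is irreducible and $G=SU(2)$ or $SO(3)$ has discrete centre, the Lie algebra of $\Gamma_{A}$ is trivial, so $C\equiv 0$. The only non-routine step in this plan is the algebraic identity $\langle[B.B],[B,C]\rangle=0$; the Weitzenb\"ock calculation and the irreducibility argument are standard, though one must track signs carefully through $d_{A}^{\ast}=-\ast d_{A}\ast$, the self-duality of $B$, and the ad-invariance manipulations.
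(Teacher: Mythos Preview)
The paper does not supply its own proof of this statement; it is quoted without proof from \cite{BM}. Your Bochner--Weitzenb\"ock argument is correct and is precisely the approach taken in Mares' thesis: apply $d_{A}^{\ast}$ to the first Vafa--Witten equation, use $\ast B=B$ and $F_{A}^{-}\wedge B=0$ to reduce $(d_{A}^{\ast})^{2}B$ to $-\ast[F_{A}^{+},B]$, pair with $C$ and substitute the second equation, and then invoke the pointwise vanishing $\langle[B.B],[B,C]\rangle=0$ to force $\nabla_{A}C=0$ and $[B,C]=0$. Irreducibility then kills $C$.

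One small sharpening of your justification for the algebraic step: the vanishing $\langle[B.B],[B,C]\rangle=0$ is not really a consequence of $\langle u\times v,v\rangle=0$. Writing $B=\sum_{i}B^{i}\otimes\omega_{i}$ in an orthonormal basis of $\Lambda^{2,+}$ one has
\[
\langle[B.B],[B,C]\rangle=\sum_{i,j,k}\varepsilon_{ijk}\big\langle[B^{i},B^{j}],[B^{k},C]\big\rangle
=2\Big\langle\sum_{\mathrm{cyc}}\big[[B^{i},B^{j}],B^{k}\big],\,C\Big\rangle,
\]
where the first equality uses the definition of $[B.B]$ via the $\varepsilon$-product on $\Lambda^{2,+}$ and the second uses ad-invariance; the right-hand side vanishes by the Jacobi identity in $\mathfrak{g}$. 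In particular this identity holds for \emph{any} compact $G$, and the hypothesis $G=SU(2)$ or $SO(3)$ is used only at the end, where discreteness of the centre forces a covariantly constant section of $\mathfrak{g}_{P}$ for an irreducible connection to vanish.
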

In \cite{BM},\ Mares obtained a bound on $\|B\|_{L^{\infty}(X)}$ in terms of $\|B\|_{L^{2}(X)}$.
\begin{thm}(\cite{BM} Theorem 3.1.1)
Let $X$ be a closed,\ oriented,\ smooth,\ four-dimensional manifold.\ There exists a constant $\la=\la(X)$ with the following property.\ For any principal bundle $P\rightarrow X$ and any $L^{2}_{1}$-solution $(A,B,0)$ to the Vafa-Witten equations,
$$\|B\|_{L^{\infty}(X)}\leq \la\|B\|_{L^{2}(X)}.$$
\end{thm}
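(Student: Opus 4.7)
The plan is to mirror the proof of Theorem \ref{16} closely, with $B$ in the role of $\phi$. Before anything else, one invokes a regularity statement for the Vafa-Witten equations (the analogue of Theorem \ref{42}, established in \cite{BM}) to reduce to the case in which $(A,B,0)$ is smooth.

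The first step is a Weitzenb\"ock identity for $B$. Since $B\in\Om^{2,+}(X,\mathfrak{g}_{P})$ is self-dual, the Vafa-Witten condition $d_{A}^{\ast}B=0$ is the same as $d_{A}^{+,\ast}B=0$, so the second-order operator $d_{A}^{+}d_{A}^{+,\ast}$ annihilates $B$. Substituting $v=B$ into the Weitzenb\"ock formula (\ref{28}) then yields the pointwise identity
\begin{equation}\nonumber
\na_{A}^{\ast}\na_{A}B=-\big(\tfrac{1}{3}R_{g}-2\w_{g}^{+}\big)B-\{F_{A}^{+},B\}.
\end{equation}
Using the second Vafa-Witten equation $F_{A}^{+}=-\tfrac{1}{8}[B.B]$ and pairing with $B$ pointwise, I obtain
\begin{equation}\nonumber
\langle\na_{A}^{\ast}\na_{A}B,B\rangle=-\big(\tfrac{1}{3}R_{g}-2\w_{g}^{+}\big)|B|^{2}+\tfrac{1}{8}\langle\{[B.B],B\},B\rangle.
\end{equation}

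The main obstacle is to control the cubic-in-$B$ term $\langle\{[B.B],B\},B\rangle$, which is a priori quartic in $|B|$ and hence not manifestly bounded by a multiple of $|B|^{2}$. The key algebraic point, intrinsic to the bracket $[B.B]$ on self-dual $\mathfrak{g}_{P}$-valued $2$-forms defined in \cite{BM} Appendix A, is the pointwise inequality
\begin{equation}\nonumber
\langle\{[B.B],B\},B\rangle\leq 0,
\end{equation}
equivalently $\langle\{F_{A}^{+},B\},B\rangle\geq 0$ when $F_{A}^{+}=-\tfrac{1}{8}[B.B]$. This can be verified by a direct computation in a local orthonormal frame, using the fibrewise identification of self-dual $2$-forms with trace-free symmetric endomorphisms of the positive spinor bundle, together with the Jacobi identity in $\mathfrak{g}$. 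Granting this identity, the quartic-in-$B$ contribution may be discarded with a favourable sign, and the remaining geometric term is bounded by $C(X)|B|^{2}$, giving
\begin{equation}\nonumber
\langle\na_{A}^{\ast}\na_{A}B,B\rangle\leq\la|B|^{2}
\end{equation}
for a constant $\la=\la(X)$ depending only on the Riemannian geometry of $X$.

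Finally, applying the mean-value inequality of Theorem \ref{116} with $\sigma=B$ produces $\|B\|_{L^{\infty}(X)}\leq\la\|B\|_{L^{2}(X)}$, completing the proof. The only non-routine input is the algebraic sign of $\langle\{[B.B],B\},B\rangle$; everything else is a verbatim transcription of the argument used for Theorem \ref{16}.
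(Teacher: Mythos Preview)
The paper does not supply its own proof of this statement; it is quoted verbatim from Mares' thesis \cite{BM} (Theorem 3.1.1) and used as a black box. So there is no in-paper argument to compare against directly.

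That said, your proposal is correct and is exactly the argument Mares gives, which in turn is the template the present paper follows for the Kapustin--Witten analogue (Theorem \ref{16}): apply the relevant Weitzenb\"ock formula, use the equations to replace the curvature term by a quartic expression in the extra field, observe that this quartic has a favourable sign so it may be dropped, and feed the resulting differential inequality into Theorem \ref{116}. The one point that deserves a line of justification rather than a parenthetical is the sign $\langle\{[B.B],B\},B\rangle\leq 0$; in Mares' conventions this unwinds to a positive multiple of $|[B.B]|^{2}$ appearing with the correct sign (equivalently $\langle\{F_{A}^{+},B\},B\rangle=\tfrac{1}{4}|[B.B]|^{2}\geq 0$), and you are right that this is the only substantive algebraic input.
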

\begin{lem}(\cite{FL1} Lemma 6.6)\label{110}
Let $X$ be a closed,\ oriented,\ four-dimensional manifold with Riemannian metric,\ $g$\ Then there are positive constants $c=c(g)$ and $\varepsilon=\varepsilon(g)\in[0,1)$,\ with following significance.\ If $G$ is a compact Lie group,\ $A$ be a connection Sobolev class $L^{2}_{2}(X)$ on a principle $P$ over $X$ with
$$\|F_{A}\|_{L^{2}(X)}\leq\varepsilon,$$
and $\nu\in\Om^{2,+}(X,\mathfrak{g}_{P})$,\ then
\begin{equation}
\|\nu\|_{L_{1,A}^{2}(X)}\leq c\big{(}\|d^{+,\ast}_{A}\nu\|_{L^{2}(X)}+\|\nu\|_{L^{2}(X)}\big{)}.
\end{equation}
\end{lem}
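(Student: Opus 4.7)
The plan is to prove Lemma \ref{110} by a standard Weitzenb\"ock-plus-absorption argument. The key input is the Weitzenb\"ock formula (\ref{28}) already recorded in the paper,
\begin{equation*}
2d^{+}_{A}d^{+,\ast}_{A}\nu = \nabla^{\ast}_{A}\nabla_{A}\nu + \bigl(\tfrac{1}{3}R_{g}-2\w^{+}_{g}\bigr)\nu + \{F^{+}_{A},\nu\},
\end{equation*}
valid for $\nu\in\Om^{2,+}(X,\mathfrak{g}_{P})$. Pairing with $\nu$ and integrating over $X$, I would rearrange to get
\begin{equation*}
\|\nabla_{A}\nu\|^{2}_{L^{2}(X)} = 2\|d^{+,\ast}_{A}\nu\|^{2}_{L^{2}(X)} - \int_{X}\langle(\tfrac{1}{3}R_{g}-2\w^{+}_{g})\nu,\nu\rangle - \int_{X}\langle\{F^{+}_{A},\nu\},\nu\rangle.
\end{equation*}

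Next I would estimate each curvature term. The scalar/Weyl piece is bounded pointwise by the metric geometry of the closed manifold $X$, giving a contribution controlled by $C_{g}\|\nu\|^{2}_{L^{2}(X)}$. For the nonlinear term I would use H\"older to obtain
\begin{equation*}
\Bigl|\int_{X}\langle\{F^{+}_{A},\nu\},\nu\rangle\Bigr| \le c\,\|F^{+}_{A}\|_{L^{2}(X)}\|\nu\|^{2}_{L^{4}(X)} \le c\,\|F_{A}\|_{L^{2}(X)}\|\nu\|^{2}_{L^{4}(X)},
\end{equation*}
and then the Kato inequality combined with the Sobolev embedding $L^{2}_{1}\hookrightarrow L^{4}$ on the compact four-manifold $X$ to get $\|\nu\|^{2}_{L^{4}(X)}\le C_{S}(\|\nabla_{A}\nu\|^{2}_{L^{2}(X)}+\|\nu\|^{2}_{L^{2}(X)})$.

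Combining these ingredients and invoking the hypothesis $\|F_{A}\|_{L^{2}(X)}\le\varepsilon$, the nonlinear term is bounded by $cC_{S}\varepsilon\bigl(\|\nabla_{A}\nu\|^{2}_{L^{2}(X)}+\|\nu\|^{2}_{L^{2}(X)}\bigr)$. Choosing $\varepsilon=\varepsilon(g)\in[0,1)$ small enough that $cC_{S}\varepsilon<\tfrac{1}{2}$, I can absorb the $\|\nabla_{A}\nu\|^{2}_{L^{2}(X)}$ piece into the left-hand side, concluding
\begin{equation*}
\|\nabla_{A}\nu\|^{2}_{L^{2}(X)} \le 4\|d^{+,\ast}_{A}\nu\|^{2}_{L^{2}(X)} + C'\|\nu\|^{2}_{L^{2}(X)},
\end{equation*}
and adding $\|\nu\|^{2}_{L^{2}(X)}$ to both sides, then taking square roots with the elementary inequality $\sqrt{x+y}\le\sqrt{x}+\sqrt{y}$, yields the desired estimate $\|\nu\|_{L^{2}_{1,A}(X)}\le c\bigl(\|d^{+,\ast}_{A}\nu\|_{L^{2}(X)}+\|\nu\|_{L^{2}(X)}\bigr)$.

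The main obstacle is the absorption step: one must bound the quartic term $\int\langle\{F^{+}_{A},\nu\},\nu\rangle$ by something of the form $(\text{small})\cdot\|\nabla_{A}\nu\|^{2}_{L^{2}(X)}+(\text{large})\cdot\|\nu\|^{2}_{L^{2}(X)}$, which only works because the Sobolev embedding in dimension four pairs exactly with the critical power four and because the smallness hypothesis on $\|F_{A}\|_{L^{2}(X)}$ is available. The rest of the proof is bookkeeping, and no regularity beyond $L^{2}_{2}$ on $A$ is required to make the formal manipulations rigorous.
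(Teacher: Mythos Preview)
Your argument is correct and is exactly the standard Weitzenb\"ock-plus-absorption proof one expects. Note, however, that the paper itself does not supply a proof of this lemma: it is quoted directly from \cite{FL1} (Lemma 6.6) and used as a black box in the Vafa--Witten section, so there is no ``paper's own proof'' to compare against beyond the cited reference, whose argument is essentially the one you wrote.
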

For any real constant $C\in\mathbb{R}^{+}$,\ we defined the $C$-truncated moduli space.
$$M_{VW}^{C}:=\{(A,B,0)\in M_{VW}\mid\|B\|_{L^{2}(X)}\leq C\},$$
then we have a similar result as follow
\begin{thm}\label{5}
Let $X$ be a closed,\ oriented,\ four-dimensional manifold;\ and $P\rightarrow X$ be a principal $G$-bundle with $G$ being a compact Lie group  with $p_{1}(P)$ negative and be such that there exist $\mu,\de>0$ with the property that $\mu(A)\geq\mu$ for all $A\in\mathfrak{B}_{\de}(P,g)$,\ where $\mu(A)$ is as in (\ref{12}).\ There exist a positive constant,\ $C$,\ with the following significance.\ If $(A,B,0)$ is an $L^{2}_{1}$ solution of the Vafa-Witten equations and $\|B\|_{L^{2}(X)}\leq C$,\ then
$$M_{VW}^{C}=\{(A,B,0)\in M_{VW}\mid F^{+}_{A}=0,\ B=0\}.$$
Moreover,\ if $M_{ASD}$ is non-empty and $M_{VW}\backslash M_{ASD}$ is also non-empty,\ then the moduli space $M_{VW}$ is not connected.
\end{thm}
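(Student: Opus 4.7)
The plan is to adapt the proof of Theorem \ref{2} to the Vafa-Witten setting, with the self-dual Weitzenb\"ock formula (\ref{28}) playing the role that the one-form Weitzenb\"ock (\ref{40}) did there. By the Vafa-Witten analogue of Theorem \ref{42} (\cite{BM} Theorem 3.3.10), I may assume $(A,B,0)$ is smooth after an $L^2_2$-gauge transformation. The Mares $L^\infty$-bound then gives $\|B\|_{L^\infty(X)} \leq \lambda\|B\|_{L^2(X)}$, and the constraint $F_A^+ = -\tfrac{1}{8}[B.B]$ forces $\|F_A^+\|_{L^\infty(X)} \leq c_1\|B\|_{L^2(X)}^2$. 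Shrinking $C$ so that $c_1 C^2$ is smaller than both the $\varepsilon$ of Corollary \ref{17} and the $\delta$ from the hypothesis on $\mathfrak{B}_\delta(P,g)$, I use the hypothesis $\mu(A) \geq \mu$ and invoke Corollary \ref{17} to produce an ASD connection $A_0$ with $\|A-A_0\|_{L^2_1} \leq c_2\|F_A^+\|_{L^\infty}$; Lemma \ref{59} (with $p=2$, $q=4$) then refines this to $\|A-A_0\|_{L^4} \leq c_3\|F_A^+\|_{L^2}$.

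Next, I would pair the Weitzenb\"ock (\ref{28}) with $B$ and integrate, once for $A_0$ (where $F_{A_0}^+ = 0$ kills the curvature coupling) and once for $A$ (where $d_A^\ast B = 0$, hence $d^{+,\ast}_A B = 0$, makes the left-hand side vanish). The two resulting identities read
$$\|\nabla_{A_0} B\|_{L^2}^2 + \int_X\langle(\tfrac{1}{3}R_g - 2\omega_g^+)B,B\rangle = 2\|d^{+,\ast}_{A_0} B\|_{L^2}^2 \geq 2\mu\|B\|_{L^2}^2,$$
$$\|\nabla_A B\|_{L^2}^2 + \int_X\langle(\tfrac{1}{3}R_g - 2\omega_g^+)B,B\rangle + \kappa\|F_A^+\|_{L^2}^2 = 0,$$
where in the second identity I have used an algebraic identity $\int_X\langle\{F_A^+,B\},B\rangle = \kappa\|F_A^+\|_{L^2}^2$ for some $\kappa > 0$, which follows from $F_A^+ = -\tfrac{1}{8}[B.B]$ together with the symmetric bracket identities on $\Omega^{2,+}(\mathfrak{g}_P)$ in \cite{BM}. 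Combining these with the standard commutator estimate
$$\|\nabla_A B - \nabla_{A_0} B\|_{L^2}^2 \leq c_4\|A-A_0\|_{L^4}^2\|B\|_{L^4}^2 \leq c_5\|F_A^+\|_{L^2}^2\|B\|_{L^2}^2,$$
in the same pattern as the final chain of inequalities in the proof of Theorem \ref{2}, I obtain
$$2\mu\|B\|_{L^2}^2 \leq (c_5\|B\|_{L^2}^2 - \kappa)\|F_A^+\|_{L^2}^2.$$
A final shrinking of $C$ with $c_5 C^2 < \kappa$ makes the right-hand side nonpositive, forcing $B \equiv 0$ and then $F_A^+ = -\tfrac{1}{8}[B.B] = 0$.

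For the disconnectedness claim, the first part identifies $M_{VW}^C$ with the embedded ASD moduli space $\{[A,0,0] : A \in M_{ASD}\}$, so the continuous function $[A,B,0] \mapsto \|B\|_{L^2}$ on $M_{VW}$ takes no value in $(0,C]$. Consequently $M_{ASD} = \{\|B\|_{L^2} = 0\}$ and $M_{VW}\setminus M_{ASD} = \{\|B\|_{L^2} > C\}$ are each open in $M_{VW}$, and both are nonempty by hypothesis, so $M_{VW}$ splits as the disjoint union of two nonempty open sets. The main obstacle I anticipate is verifying the sign and the precise value of the constant $\kappa$ in $\int_X\langle\{F_A^+,B\},B\rangle = \kappa\|F_A^+\|^2$, which requires carefully unpacking the symmetric brackets on $\Omega^{2,+}(\mathfrak{g}_P)$ from \cite{BM}; the remaining analysis is a direct transplant of the argument used for Theorem \ref{2}.
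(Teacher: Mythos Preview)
Your argument is correct in outline (the $\kappa>0$ check does go through via the self-adjointness of the bracket $[\,\cdot\,.\,\cdot\,]$ on $\Omega^{2,+}(\mathfrak{g}_P)$, giving $\int_X\langle\{F_A^+,B\},B\rangle = -\tfrac18\int_X\langle [B.B],[B.B]\rangle = -8\|F_A^+\|_{L^2}^2$ up to normalization), but it is far more elaborate than the paper's proof, which exploits a structural feature of the Vafa--Witten equations absent in the Kapustin--Witten case.

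The key observation you are missing is that $B$ already lives in $\Omega^{2,+}(X,\mathfrak{g}_P)$, which is \emph{exactly} the space on which the eigenvalue $\mu(A)$ in (\ref{12}) is defined. The paper therefore bypasses the comparison connection $A_0$, the two Weitzenb\"ock identities, and the $\kappa$ computation entirely. Its argument is: the $L^\infty$ bound plus $F_A^+=-\tfrac18[B.B]$ gives $\|F_A^+\|_{L^2}\leq\lambda\|B\|_{L^2}^2$, so shrinking $C$ puts $A\in\mathfrak{B}_\delta(P,g)$ and makes Lemma~\ref{110} applicable; combining that lemma with the hypothesis $\mu(A)\geq\mu$ yields
\[
\|B\|_{L^2_{1,A}(X)}\leq c\bigl(1+(\mu/2)^{-1/2}\bigr)\|d^{+,\ast}_A B\|_{L^2(X)}.
\]
But the Vafa--Witten equation $d^\ast_A B=0$ is the same as $d^{+,\ast}_A B=0$ for self-dual $B$, so $B\equiv 0$ and hence $F_A^+=0$ immediately. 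Your approach has the virtue of being a uniform template transplanted from Theorem~\ref{2}; the paper's shortcut is shorter and sidesteps the algebraic verification you flagged, but it is specific to the situation where the extra field is itself a self-dual $2$-form. The disconnectedness argument is the same in both.
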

\begin{proof}
From Lemma \ref{110} and the Definition \ref{111} of $\mu(A)$,\ $\forall\nu\in\Om^{2,+}(X,\mathfrak{g}_{P})$,\ we have
\begin{equation}\label{112} \begin{split}
\|\nu\|_{L_{1,A}^{2}(X)}&\leq c\big{(}\|d^{+,\ast}_{A}\nu\|_{L^{2}(X)}+\|\nu\|_{L^{2}(X)}\big{)}\\
&\leq c\big{(}1+1/\sqrt{\mu_{g}(A)}\big{)}\|d^{+,\ast}_{A}\nu\|_{L^{2}(X)}\\
&\leq c\big{(}1+1/\sqrt{\mu/2}\big{)}\|d^{+,\ast}_{A}\nu\|_{L^{2}(X)}.\\
\end{split}
\end{equation}
Since $(A,B,0)$ is a solution of the Vafa-Witten equations,\ then we have
\begin{equation}\nonumber
\begin{split}
\|F^{+}_{A}\|_{L^{2}(X)}&=\frac{1}{8}\|[B.B]\|_{L^{2}(X)}\\
&\leq \frac{1}{4}\|B\|_{L^{4}(X)}^{2}\leq\frac{1}{4}\|B\|_{L^{\infty}(X)}\|B\|_{L^{2}(X)}\\
&\leq \la(X)\|B\|_{L^{2}(X)}^{2}.\\
\end{split}
\end{equation}
We choose $C$ sufficiently small such that $\la C^{2}\leq\varepsilon$,\ where $\varepsilon$ is a as in hypotheses of Lemma \ref{110},\ we apply a priori estimate (\ref{112}) to $\nu=B$ to obtain
\begin{equation}\label{113}
\|B\|_{L^{2}_{1}(X)}\leq c\big{(}1+1/\sqrt{\mu/2}\big{)}\|d^{+,\ast}_{A}B\|_{L^{2}(X)}.
\end{equation}
As $0=d^{\ast}_{A}B=2d^{+,\ast}_{A}B$ on $X$,\ therefore,\ $B=0$ on $X$ by (\ref{113}).

If $M_{ASD}$ is non-empty and $M_{VW}\backslash M_{ASD}$ is also non-empty,\ since the map $(A,B)\mapsto\|B\|_{L^{2}}$ is continuous,\ then the moduli space $M_{VW}$ is not connected.

\end{proof}
\begin{cor}
Assume the hypotheses of Corollary \ref{92} and that $g$ is generic.\ There exist a positive constant,\ $C$,\ with the following significance.\ If $(A,B,0)$ be an $L^{2}_{1}$ solution of the Vafa-Witten equations and $\|B\|_{L^{2}(X)}\leq C$,\ then
$$M_{VW}^{C}=\{(A,B,0)\in M_{VW}\mid F^{+}_{A}=0,\ B=0\}.$$
Moreover,\ if $M_{ASD}$ is non-empty and $M_{VW}\backslash M_{ASD}$ is also non-empty,\ then the moduli space $M_{VW}$ is not connected.
\end{cor}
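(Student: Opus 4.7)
The plan is to observe that the final Corollary is essentially a direct combination of Theorem \ref{5} with the generic metric results from Section 4.1, so the work reduces to verifying that the hypotheses of Theorem \ref{5} are in place under the hypotheses inherited from Corollary \ref{92}.

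First, I would invoke Corollary \ref{43}. Under the hypotheses of Corollary \ref{92} (namely, $p_1(P)<0$, together with either $b^{+}(X)=0$ or $b^{+}(X)>0$ with $w_2(P)$ non-trivial, and a generic conformal class $[g]\in\mathscr{C}(X,p_1(P))$), Theorem \ref{50} applies and Corollary \ref{43} then produces constants $\delta=\delta(P,g)\in(0,1]$ and $\mu_0=\mu_0(P,g)>0$ with
\begin{equation*}
\mu(A)\geq \tfrac{\mu_0}{2},\qquad [A]\in\mathfrak{B}_{\delta}(P,g).
\end{equation*}
This is exactly the uniform positive lower bound on $\mu(A)$ over the open neighborhood $\mathfrak{B}_{\delta}(P,g)$ of the ASD moduli space that is required as an input to Theorem \ref{5}.

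Next, with $\mu:=\mu_0/2$ and $\delta$ as above, the hypotheses of Theorem \ref{5} are satisfied verbatim. I would then apply Theorem \ref{5} to obtain the positive constant $C$ (depending on $P$, $g$, $\mu_0$, $\delta$) such that any $L^{2}_{1}$ solution $(A,B,0)$ of the Vafa--Witten equations with $\|B\|_{L^{2}(X)}\leq C$ satisfies $F_A^{+}=0$ and $B=0$. This gives the identification
\begin{equation*}
M_{VW}^{C}=\{(A,B,0)\in M_{VW}\mid F^{+}_{A}=0,\ B=0\},
\end{equation*}
which is the first assertion.

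For the disconnectedness statement, I would argue exactly as in the last lines of the proof of Theorem \ref{5}: the truncated set $M_{VW}^{C}$ coincides, by the first part, with the image of $M_{ASD}$ under $A\mapsto(A,0,0)$, while any element of $M_{VW}\setminus M_{ASD}$ must have $\|B\|_{L^{2}(X)}>C$ (otherwise the first part would force $B=0$ and thus place it in $M_{ASD}$). Since the map $(A,B)\mapsto\|B\|_{L^{2}}$ is continuous on $M_{VW}$, the two non-empty sets $\{\|B\|_{L^{2}}\leq C\}$ and $\{\|B\|_{L^{2}}>C\}$ separate $M_{VW}$, so $M_{VW}$ is disconnected. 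There is no serious obstacle here; the only point requiring a little care is to make sure the hypothesis ``$g$ generic'' in the statement is used precisely in the way Corollary \ref{43} requires, so that the uniform bound $\mu(A)\geq\mu_0/2$ on the whole neighborhood $\mathfrak{B}_{\delta}(P,g)$ (and not merely on $M_{ASD}(P,g)$) is legitimately available for the application of Theorem \ref{5}.
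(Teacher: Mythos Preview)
Your proposal is correct and follows the same approach the paper intends: just as Corollary~\ref{92} is obtained by combining Theorem~\ref{2} with the uniform lower bound on $\mu(A)$ from Corollary~\ref{43}, the present corollary is obtained by combining Theorem~\ref{5} with Corollary~\ref{43}. The only minor imprecision is in your separation argument for disconnectedness---the sets $\{\|B\|_{L^2}\le C\}$ and $\{\|B\|_{L^2}>C\}$ are not both open---but since the first part shows $\|B\|_{L^2}$ omits the interval $(0,C]$, you can instead use the open sets $\{\|B\|_{L^2}<C\}$ and $\{\|B\|_{L^2}>C\}$, which already cover $M_{VW}$; this matches the paper's (equally brief) continuity argument.
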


\subsection*{Acknowledgment}
I would like to thank my supervisor Professor Sen Hu for suggesting me to consider this problem,\ and for providing numerous ideas during the course of
stimulating exchanges.\ I would like to thank the anonymous referee for a careful reading of my manuscript and helpful comments.\ I also would like to thank Siqi He,\ Zhi Hu,\ Ruiran Sun and Yuuji Tanaka for further discussions about this work.\ This research
is partially supported by Wu Wen-Tsun Key Laboratory of Mathematics of Chinese Academy of Sciences at USTC.

\bigskip
\footnotesize

\end{document}